\newtheorem{thm}{Theorem}[section]
\newtheorem{lem}[thm]{Lemma}
\newtheorem{prop}[thm]{Proposition}
\newtheorem{cor}[thm]{Corollary}
\theoremstyle{definition} 
\newtheorem{rem}[thm]{Remark}
\numberwithin{equation}{section}
\newcommand{\R}{\mathbb{R}}
\newcommand{\IC}{\mathbb{C}}
\newcommand{\cH}{\mathcal{H}}
\newcommand{\cL}{\mathcal{L}}
\newcommand{\C}{\operatorname{C}} %spaces of continuous functions
\newcommand{\W}{\operatorname{W}}% Sobolev spaes
\newcommand{\reu}{{\mathbb{R}^{1+n}_+}}
\newcommand{\ree}{{\mathbb{R}^{1+n}}}
\renewcommand{\P}{D} %CHANGED IT
\newcommand{\M}{B} %CHANGED IT
\renewcommand{\div}{\operatorname{div}}
\newcommand{\gradA}{{\nabla_{\! A}}} %conormal gradient
\newcommand{\dnuA}{\partial_{\nu_A}} %conormal derivative
\newcommand{\pe}{\perp}
\newcommand{\pa}{\parallel}
\newcommand{\NT}{\widetilde{N}_*} %non-tangential Max
\newcommand{\NTq}{\widetilde{N}_{*,q}} % q-adapted non-tangential max
\newcommand{\NTone}{\widetilde{N}_{*,1}} % 1-adapted non-tangential max
\newcommand{\HL}{\mathcal{M}}% Hardy -Littelwood max operator
\newcommand{\e}{\mathrm{e}} %Euler's number
\let\ii\i
\renewcommand{\i}{\mathrm{i}} %imaginary unit
\newcommand{\eps}{\varepsilon} %epsilon
\renewcommand\Re{\operatorname{Re}}
\newcommand{\Lop}{\mathcal{L}} %bounded linear operators
\newcommand{\Le}{\mathcal{L}}%elliptic operator
\DeclareMathOperator{\supp}{supp} %support
\DeclareMathOperator{\ran}{\mathsf{R}} %range
\DeclareMathOperator{\dom}{\mathsf{D}} %domain
\newcommand{\dual}[2]{\langle #1,#2 \rangle}
\newcommand{\clos}[1]{\overline{#1}}
\newcommand{\tdd}[2]{\tfrac{\partial #1}{\partial #2}}
\newcommand{\wt}{\widetilde}
\newcommand{\con}[1]{\overline{#1}}
\newcommand{\ta}{{\scriptscriptstyle \parallel}}
\newcommand{\no}{{\scriptscriptstyle\perp}}
\newcommand{\pd}{\partial}
\newcommand{\mD}{{\mathcal D}}
\newcommand{\mS}{{\mathcal S}}
\def\Xint#1{\mathchoice
{\XXint\displaystyle\textstyle{#1}}%
{\XXint\textstyle\scriptstyle{#1}}%
{\XXint\scriptstyle\scriptscriptstyle{#1}}%
{\XXint\scriptscriptstyle%
\scriptscriptstyle{#1}}%
\!\int}
\def\XXint#1#2#3{{\setbox0=\hbox{$#1{#2#3}{%
\int}$ }
\vcenter{\hbox{$#2#3$ }}\kern-.6\wd0}}
\def\barint{\,\Xint -} % \, corrects the \! used in the definition
\def\bariint{\barint_{} \kern-.4em \barint}
\def\bariiint{\bariint_{} \kern-.4em \barint}
\renewcommand{\iint}{\int_{}\kern-.34em \int} %\ minor space between the integrals
\renewcommand{\iiint}{\iint_{}\kern-.34em \int} %\ minor space between the integrals
\title[On uniqueness  for Dirichlet  problems]{On uniqueness results for Dirichlet problems \\ of elliptic systems without \\ 
DeGiorgi-Nash-Moser regularity}
\author{Pascal Auscher}
\author{Moritz Egert}
\address{Laboratoire de Math\'{e}matiques d'Orsay, Univ. Paris-Sud, CNRS, Universit\'{e} Paris-Saclay, 91405 Orsay, France; and Laboratoire Ami\'enois de Math\'ematique  Fondamentale et Appliqu\'ee, 
CNRS-UMR 7352, Universit\'e de Picardie-Jules Verne, 80039 Amiens France}
\email{pascal.auscher@math.u-psud.fr}
\address{Laboratoire de Math\'{e}matiques d'Orsay, Univ. Paris-Sud, CNRS, Universit\'{e} Paris-Saclay, 91405 Orsay, France}
\email{moritz.egert@math.u-psud.fr}
\thanks{The authors were partially supported by the ANR project ``Harmonic Analysis at its Boundaries'', ANR-12-BS01-0013. This material is based upon work supported by the National Science Foundation under Grant No.\ DMS-1440140, while Auscher was in residence at the MSRI in Berkeley, California, during the Spring 2017 semester. Egert was supported by a public grant as part of the FMJH and also thanks the MSRI for hospitality.}
\subjclass[2010]{Primary: 35J57, 35A02; Secondary: 35J50, 42B25, 35C15.}
\date{\today}
\dedicatory{}
\keywords{Dirichlet problems, uniqueness of solutions, elliptic systems, single layer operators.}
\begin{document}
\begin{abstract}
We study uniqueness of Dirichlet problems of second order divergence-form elliptic systems with transversally independent coefficients on the upper half-space in the absence of regularity of solutions. To this end, we develop a substitute for the fundamental solution used to invert elliptic operators on the whole space by means of a representation via abstract single layer potentials. We also show that such layer potentials are uniquely determined. 
\end{abstract}
\maketitle
%%%%%%%%%%%%%%%%%%%%%%%%%%%%%%%%%%%%%%%%%%%%%%%%%%%%%%%%%%%%%%%%%%%%%%%%%%%%%%%%%%%%%%%%%%%%%%%%%%%%%%%%%%%%%%%%%%%%%%%%%%%%%%%%%%%%%%%%%%%%%%%%%%%%%%%%%%%%%%%%%%%%
\section{Introduction}
\label{Sec: Introduction}
Consider the elliptic system of $m$  equations in $n+1$ dimensions, $n\ge 1$, given by
\begin{equation}  \label{eq:divform}
  -\sum_{i,j=0}^n\sum_{\beta= 1}^m \pd_i\big( A_{i,j}^{\alpha, \beta}(x) \pd_j u^{\beta}(t,x)\big) =0,\qquad \alpha=1,\ldots, m, \qquad  t>0, x\in \R^n,
\end{equation}
where $\pd_0:= \tdd{}{t}$ and $\pd_i:= \tdd{}{x_{i}}$ if $i=1,\ldots,n$ with measurable coefficients $A$ that do not depend on the variable $t$ transversal to the boundary. Ellipticity will be described below but when $m=1$, the uniformly elliptic equations will be included in our considerations. For short, we shall write $\Le u=-\div A \nabla u=0$ instead of \eqref{eq:divform}.

Given $f\in L^p(\R^{n}; \IC^{m})$, following  \cite{Da}, the $L^p$ Dirichlet problem on the upper half-space  can be posed in the sense that one asks for a weak solution $u$ with a certain non-tangential maximal function controlled in $L^p$ and which converges to the boundary data $f$ almost everywhere in a non-tangential sense. 
When $f\in \dot W^{1,p}(\R^n; \IC^m)$, following  \cite{KP}, the Dirichlet problem with data $f$, also known as the regularity problem, can  be posed by asking for a maximal non-tangential control on $\nabla u$   and convergence of $u$ to $f$ at the boundary as before.  
Existence and uniqueness to these problems are usually obtained by different arguments. For an overview on the topic the reader can refer to \cite{Ke}.

Our first goal is to prove duality results of the following type under minimal assumptions: existence in one of the boundary value problems for $(\Le^*,p')$ implies uniqueness in the other problem for $(\Le, p)$ in some range of $p$, which depends on $\Le$, where $p'$ is the conjugate exponent to $p$. We shall also consider the case $p\le 1$ for the regularity problem, in which case the adjoint Dirichlet problem must be posed with data in $BMO$ or in a H\"older space.  

Similar uniqueness results, requiring ``dual'' information, appear  in  \cite{KP, AAAHK, HMaMo}, to cite just the most relevant to our situation. The arguments are  culminations of many earlier results on Laplace's equation and real symmetric equations in Lipschitz domains (\cite{Da, DaK, JK, V}).  In those works,  $t$-independence of the coefficients is not always assumed, but when restricted to this hypothesis, the so-called de Giorgi--Nash--Moser regularity properties of solutions (DGNM) are also used in a strong way to bring into play either harmonic measure techniques for real equations or representations and estimates with fundamental solutions for complex equations enjoying (DGNM). It seems that \cite{HMaMo} contains the most advanced results in this direction up to now. 

Here we want to dispense with the assumption (DGNM) and, of course, harmonic measure is not available. In a similar direction, \cite{AM} establishes existence-uniqueness relations between the $L^p$ regularity problem and a dual $L^{p'}$ Dirichlet problem which for $1<p<\infty$ is posed with a different, less classical interior control, namely the square function. Uniqueness  in this situation, however, does not suffice to conclude for uniqueness of the Dirichlet problem when posed with a non-tangential maximal control. On the contrary, when $p\le 1$, the results in \cite{AM} do apply and for clarity we shall put them into context in Section~\ref{sec:other}.

Our general strategy is to develop a substitute for the fundamental solution used to invert the elliptic operator $\Le$ on $\ree$. This is interesting its own right. Surprisingly, not using the fundamental solution and its kernel estimates  will make the arguments for uniqueness conceptually and technically simpler. It also allows us to reach minimal assumptions, even when assuming further (DGNM). Let us explain in formal terms the substitution idea. 

In the case of transversally independent coefficients, the fundamental solution $\Gamma(t,x, s,y)$ of $\Le$, constructed in \cite{HK} under (DGNM) and more recently without this assumption in \cite{B}, has time translation invariance, that is, it depends on $t-s$. Its restriction to fixed times $(t,0)$, $t\ne 0$, is called the single layer potential $\mS_{t}(x,y)$ at time $t$. Formally writing 
\begin{align*}
 (\Le^{-1}f)(t,x)= \iint_{\ree} \Gamma(t,x, s,y) f(s,y)\ ds \, dy= \iint_{\ree} \mS_{t-s}(x,y) f(s,y)\ dy \, ds,
\end{align*}
allows one to recover the fundamental solution by a convolution in time with $\mS_{t}(x,y)$. A difficulty is to give a meaning to the last term as a converging integral in order to obtain further estimates on $\Le^{-1}f$. However, one can interpret this formula at the level of operators by writing
\begin{equation}
\label{eq:L-1}
(\Le^{-1}f)(t,x)=   \int_{\R} (\mS_{t-s} f(s,\cdot))(x) \ ds,
\end{equation}
provided the   operator $\mS_{t}$ with  kernel  $\mS_{t}(x,y)$ has the expected boundedness properties. Indeed, \cite{R} shows  the remarkable fact that  $\mS_{t}$ is bounded from $L^2(\R^n; \IC^m)$ into $\dot W^{1,2}(\R^n;\IC^m)$  whether or not (DGNM) holds and that, when (DGNM) is assumed, its kernel agrees with (or can be used to define) $\Gamma(t,x,0,y)$. 

This suggests that knowledge on the operator $\mS_{t}$ alone is sufficient to recover $\Le^{-1}$.  This is what we shall prove and use, thereby giving a precise meaning  to the representation \eqref{eq:L-1}. We shall also prove that  knowledge of $\Le^{-1}$ alone uniquely determines the operator  $\mS_{t}$, which we decide to call the single layer operator (associated with $\Le$). 

Having \eqref{eq:L-1} at hand, more operator bounds of $\mS_{t}$ can be plugged in this formula to give further estimates on $\Le^{-1}f$. Under (DGNM), \cite{HMiMo} proves some bounds   by Calder\'on-Zygmund theory. But, following \cite{R}, we may also compute    $\mS_{t}$ (recall it is unique) using the connection between $\Le$ and a first order Dirac operator $DB$    proposed in \cite{AAMc}. Thus,   the operator bounds  proved in \cite{AusSta} become available. Such bounds, including the ones of \cite{HMiMo}, hold for a range of spaces determined by the coincidence of abstract Hardy spaces associated with $DB$ and  the corresponding concrete Hardy spaces associated with $D$.  At the heart of this treatment lies the $H^\infty$ functional calculus of $DB$ proved in \cite{AKMc} by a remarkable elaboration on the solution of the Kato problem for elliptic systems.  

The organisation of the article is as follows. First, we present our main results and the strategy to prove uniqueness (Section \ref{sec:strategy}). We next present proofs of our main results in the case $p=2$ because the arguments there do not require any use of the single layer operators and still contain the main ideas (Section \ref{sec:p=2}). Then, we state in what sense \eqref{eq:L-1} holds  (Section \ref{sec:layer}) and  move to $p\ne 2$ (Section \ref{sec:pnot2}).  
In  Section \ref{sec:other} we discuss the regularity problem with Hardy-Sobolev data versus the Dirichlet problem with $BMO$ or H\"older continuous data. We prove  \eqref{eq:L-1} in various ways (Section \ref{sec:prooflayer}).
Some technical lemmas are presented in the final Section \ref{sec:technical}.
%%%%%%%%%%%%%%%%%%%%%%%%%%%%%%%%%%%%%%%%%%%%%%%%%%%%%%%%%%%%%%%%%%%%%%%%%%%%%%%%%%%%%%%%%%%%%%%%%%%%%%%%%%%%%%%%%%%%%%%%%%%%%%%%%%%%%%%%%%%%%%%%%%%%%%%%%%%%%%%%%%%%
\section{Setup, results and strategy of proofs}\label{sec:strategy}

\subsection{Notation and general assumptions}

We shall use the following notation for spaces. We denote by $C_{0}^\infty(\R^d)$  the space of compactly supported smooth complex-valued functions on $\R^d$. For $1< p <\infty$, the inhomogeneous Sobolev space on $ \R^d$ consists of those $f \in L^p(\R^d; \IC)$ for which $\nabla f$ is $p$-integrable. It contains $C_{0}^\infty(\R^d)$ as a dense subspace. The homogeneous Sobolev space $\dot W^{1,p}(\R^d)$ consists of all distributions on $\R^d$ for which $\nabla f$ is in $L^p(\R^d; \IC^{d})$. It is a Banach space when moding out the constants and it can be realised as the closure of $W^{1,p}(\R^d)$ modulo constants for the semi-norm  $\|\nabla f\|_{p}$. Its dual $\dot W^{-1,p'}(\R^d)$ is identified to the space of distributions $\div F$ with $F\in L^{p'}(\R^d; \IC^d)$. 
The space of continuous complex-valued functions on $\R$ that vanish at $\pm\infty$ is denoted by $C_{0}(\R)$ and $C_0([0,\infty))$ denotes the space of continuous functions on $[0,\infty)$ that vanish at $+\infty$.
All these spaces have  $E$-valued extension (denoted by $L^p(\R^d; E)$ and so on) when $E$ is a complex Banach space. Occasionally, we use the subscript $loc$ to indicate that certain conditions hold only on compact subsets. 

We denote points in $\ree=\R\times \R^{n}$ by $(t,x)$ etc. We set $\R^{1+n}_+:=(0,\infty)\times \R^n$. For short, we write $\Le u=-\div A \nabla u=0$ to mean \eqref{eq:divform}, where we  assume that the matrix    
\begin{equation}   
\label{eq:boundedmatrix}
  A(x)=(A_{i,j}^{\alpha,\beta}(x))_{i,j=0,\ldots, n}^{\alpha,\beta= 1,\ldots,m}\in L^\infty(\R^n;\cL(\IC^{m(1+n)})),
\end{equation} 
is bounded and measurable, independent of $t$ (transversal independence), and satisfies the following strict accretivity condition on  the subspace  $\cH$ of $L^2(\R^n;\IC^{m(1+n)})$ defined by $(f_{i}^\alpha)_{i=1,\ldots,n}$ being curl free in $\R^n$ for all $\alpha$:  For some $\lambda>0$ and all $f \in \cH$,
\begin{equation}   \label{eq:accrassumption}
   \int_{\R^n} \Re (A(x)f(x)\cdot  \con{f(x)}) \  dx\ge \lambda 
   \sum_{i=0}^n\sum_{\alpha=1}^m \int_{\R^n} |f_i^\alpha(x)|^2 \ dx.
\end{equation}
In particular situations, we may weaken this condition to the well-known G\aa{}rding inequality, see Remark~\ref{rem:ellipticity} below. The system \eqref{eq:divform} is considered in the sense of distributions with weak solutions in    $W^{1,2}_{loc}(\R^{1+n}_{+};\IC^m)$.

As weak solutions to elliptic systems might not be regular, we use the Whitney average variants of the usual non-tangential maximal functions. But when we get back to systems where solutions have meaningful pointwise values, these variants turn out to be equivalent to the usual pointwise control. Consider,  for $0<q<\infty$, the $q$-adapted non-tangential maximal function
\begin{equation}
\label{eq:KP}
 \NTq F(x):= \sup_{t>0}  \bigg(\bariint_{(c_0^{-1}t,c_0t)\times B(x,c_1t)} |F(s,y)|^q\ {ds \, dy}\bigg)^{1/q}, \qquad x\in \R^n,
\end{equation}
for  some fixed parameters $c_0>1$, $c_{1}>0$.  We use $B(x,r)$ for the Euclidean ball centred at $x$ with radius $r$ and denote averages by dashed integrals. When $q=2$, we simply  write $\NT$. For fixed $p>0$ and $q>0$, a covering argument reveals that changing the parameters yields equivalent $\|\NTq F\|_{p}$ norms.  In the following, we shall use \begin{equation}
\label{eq:whitney}
W(t,x):= (t/2,2t)\times B(x,t)
\end{equation}
for simplicity.

\subsection{Main results and consequences}
\label{sec:main results}

For  $1<p<\infty$, the $L^p$ Dirichlet  problem  with non-tangential maximal control can be formulated as follows:  given $f\in L^p(\R^{n}; \IC^m)$, uniquely solve
\begin{equation*}
%\label{eq:Dp}
 (D)_{p}^\Le  \qquad\qquad
\begin{cases}
  \Le u=0   & \text{on}\ \reu, \\
    \NT u\in L^{p}(\R^n),   \\
    \lim_{t \to 0} \bariint_{W(t,x)} |u(s,y)-f(x)|\ ds \, dy=0 & \text{for a.e. } x\in \R^n.\end{cases}
\end{equation*}
The $L^p$ regularity problem consists in solving uniquely (modulo constants), given $f\in \dot W^{1,p}(\R^n; \IC^m)$,  
\begin{equation*}
%\label{eq:Rp}
(R)_{p}^\Le  \qquad\qquad
\begin{cases}
   \Le u=0   & \text{on}\ \reu, \\
    \NT(\nabla u)\in L^{p}(\R^n)    \\
    \lim_{t \to 0} \bariint_{W(t,x)} |u(s,y)-f(x)|\ {ds \, dy}= 0 & \text{for a.e. } x\in \R^n.\end{cases}
\end{equation*}
We have fixed the parameters for $W(t,x)$ but from Lebesgue's differentiation theorem applied to $f$ and a covering argument we can again see that the convergence of Whitney averages of $|u-f|$ is independent of their particular choice.

To formulate our main results we implicitly use a certain perturbed first order operator $DB$ associated with $\Lop$ and the associated abstract Hardy spaces $H^p_{DB}$ defined and studied in \cite{AusSta}. At this stage, the reader need not be aware of their definitions as we are only going to use the conclusions drawn in that paper. 

There is an exponent $p_+(DB)>2$ that is related to resolvent estimates for $DB$ in $L^p$ and a coercivity property of $B$ and $B^*$ in $L^p$ and $L^{p'}$, respectively; the precise definition can be found in~\cite[Sec.~3.2]{AusSta}. It is also shown there that the set of those $p \in (\frac{n}{n+1}, p_+(DB))$ such that we have the coincidence $H^p_{DB}=H^p_{D}$ of abstract and concrete Hardy spaces is an interval and that $p_+(DB)$ is also its upper endpoint. This interval is called $I_L$ in~\cite{AM}. We call it $\mathcal{H}_\mathcal{L}$ in this article. This is an open interval containing $2$ and there is a corresponding interval $\cH_{\Le^*}$.

\begin{thm}\label{thm:uniqueDir} Let $1<p<\infty$ with  $p'\in \cH_{\Le^*}$. Existence for  $(R)_{p'}^{\Le^*} $   implies  uniqueness for $(D)_{p}^\Le$. 
\end{thm}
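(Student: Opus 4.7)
The goal is to show that any weak solution $u$ of $(D)_p^\Le$ with boundary data $f=0$ vanishes identically on $\reu$. By density it suffices to establish $\iint_{\reu} u\cdot \bar\phi\, dt\, dx = 0$ for every $\phi\in C_0^\infty(\reu; \IC^m)$. The strategy is a duality argument in which $\phi$ is paired with $u$ through a well-chosen solution $w$ of the inhomogeneous adjoint equation $\Le^* w = \phi$ with zero boundary data, after which Green's identity together with $\Le u = 0$ forces the pairing to vanish.

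To construct $w$, fix $\phi$ as above and set, via the representation \eqref{eq:L-1} applied to $\Le^*$,
\begin{equation*}
 v(t,x) := \int_{\R} \bigl( \mS_{t-s}^{\Le^*} \phi(s,\cdot)\bigr)(x)\, ds,
\end{equation*}
where $\mS_t^{\Le^*}$ is the single layer operator associated with $\Le^*$. This $v$ satisfies $\Le^* v = \phi$ on $\reu$. The hypothesis $p'\in \cH_{\Le^*}$ is exactly what makes the operator bounds from \cite{AusSta} available on the relevant Hardy--Sobolev scale, so that $v$ admits a boundary trace $f_0\in \dot W^{1,p'}(\R^n;\IC^m)$ and enjoys $\NT(\nabla v)\in L^{p'}$ away from $\supp \phi$, with controlled local behaviour near $\supp \phi$. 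Now invoke the existence hypothesis for $(R)_{p'}^{\Le^*}$ with data $f_0$ to obtain $\tilde v$ solving $\Le^*\tilde v=0$ with $\NT(\nabla \tilde v)\in L^{p'}$ and $\tilde v\to f_0$ non-tangentially. The difference $w := v - \tilde v$ is then a distributional solution of $\Le^* w = \phi$ on $\reu$, vanishes non-tangentially at the boundary, and satisfies $\NT(\nabla w)\in L^{p'}$.

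It remains to compute $\iint_{\reu} u\cdot \bar \phi = \iint_{\reu} u \cdot \overline{\Le^* w}$ by Green's identity. Formally,
\begin{equation*}
 \iint_{\reu} u \cdot \overline{\Le^* w} - \iint_{\reu} \Le u\cdot \bar w = \int_{\R^n} \bigl[ u\cdot \overline{\dnuAstar w} - \dnuA u \cdot \bar w \bigr]_{t=0}\, dx + \text{(terms at } t=\infty\text{)}.
\end{equation*}
The left-hand integral involving $\Le u$ is zero; at $t=0$ the vanishing non-tangential traces of $u$ and $w$, combined with the Hölder-conjugate pairings $(\NT u, \NT(\nabla w))\in L^p\times L^{p'}$, annihilate the boundary terms, while the $t=\infty$ contributions are absorbed by the same global non-tangential bounds. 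One concludes $\iint u\cdot \bar\phi=0$, and hence $u\equiv 0$.

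The principal obstacle is making this Green's identity rigorous in the absence of (DGNM): $u$ is only in $W^{1,2}_{\loc}(\reu)$ with no global bound on $\nabla u$, so integration by parts must be performed on truncated slabs $\{\eps<t<1/\eps\}$ and the Whitney averages at the two ends controlled by the ambient non-tangential estimates. Equally delicate is the verification that the conormal pairings $\dnuAstar w\cdot u|_{t=0}$ and $\dnuA u\cdot w|_{t=0}$ are well-defined as distributional boundary pairings and that the slab truncations converge to them; this is precisely where the single layer bounds on the range $\cH_{\Le^*}$ and the associated trace and duality lemmas (gathered in the later sections) are needed in an essential way. For $p=2$, this Green-identity machinery simplifies dramatically since both $\NT u$ and $\NT(\nabla w)$ live in $L^2$ and the construction of $w$ can be executed without explicit recourse to $\mS_t^{\Le^*}$, which is why Section~\ref{sec:p=2} treats this showcase first.
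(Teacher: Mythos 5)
Your proposal captures the core duality mechanism of the paper: construct $w = H - H_1$ where $H = (\Le^*)^{-1}G$ comes from the whole-space inverse (made concrete via single layer operators) and $H_1$ solves the adjoint regularity problem with the trace $h=H(0,\cdot)$ as data, so that $w$ solves $\Le^* w = G$ on $\reu$ with vanishing boundary trace, and then pair against $u$. This is indeed what the paper does. However, there are two genuine gaps in the execution.

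First, the density reduction is too coarse. You claim it suffices to show $\iint_{\reu} u\cdot\bar\phi = 0$ for \emph{every} $\phi\in C_0^\infty(\reu;\IC^m)$, and then construct $v = (\Le^*)^{-1}\phi$ with $\NT(\nabla v)\in L^{p'}$ and trace $f_0\in\dot W^{1,p'}$. But these estimates are \emph{not} available for arbitrary $\phi$: the proof of Lemma~\ref{lem:NTHp}(ii) (and already Lemma~\ref{lem:NTH} for $p=2$) requires $G = \partial_t\tilde G$ for some $\tilde G\in C_0^\infty(\ree;\IC^m)$, because integrating by parts in $s$ twice is what produces the $O(|t|^{-2})$ decay of $\|\partial_t\nabla_x H_t\|_{p'}$ needed to get $\NTone(\nabla H)\in L^{p'}$. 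With one derivative missing one only gets $O(|t|^{-1})$, which is not integrable at infinity. So one can only prove $\dual u G = 0$ for $G$ in the restricted class of $t$-derivatives of compactly supported test functions (with the extra structure $\tilde G = \div_x G^\sharp$ when needed). This is genuinely not dense in $C_0^\infty$; the paper therefore needs (and carries out) a separate argument deducing $u=0$ from $\dual{\partial_t u}{\tilde G}=0$ over this class, showing $\nabla_x\partial_t u = 0$, then $u(t,x)=g(t)+f(x)$, and finally exploiting the boundary condition and $\NT u\in L^p$ to kill both pieces.

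Second, the Green's-identity step as you wrote it invokes a pointwise boundary pairing $\int_{\R^n}[u\cdot\overline{\dnuAstar w} - \dnuA u\cdot\bar w]_{t=0}$. For a general solution $u$ of $(D)_p^\Le$ the conormal derivative $\dnuA u|_{t=0}$ has no meaning: $u$ is only in $W^{1,2}_{\loc}(\reu)$ with no control whatsoever on $\nabla u$ near the boundary. The paper sidesteps this by inserting a Lipschitz cutoff $\theta$ supported in $\reu$ (equal to $1$ near $\supp G$ and vanishing near $t=0$, $t=\infty$ and $|x|=\infty$): the identity then reads $\dual u G = \dual{Au\nabla\theta}{\nabla(H-H_1)} - \dual{A\nabla u}{(H-H_1)\nabla\theta}$, where all integration by parts takes place strictly inside the half-space, no boundary traces of $\nabla u$ ever appear, and convergence to zero as $\theta\to 1$ is handled by non-tangential maximal estimates together with Caccioppoli to convert $s|\nabla u|$ into $|u|$ locally. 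Your remark that the obstacle is ``making Green's identity rigorous on truncated slabs'' is pointing at the right difficulty, but the resolution requires precisely this reorganisation, not a refinement of the boundary pairing.
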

 
\begin{thm}\label{thm:uniquereg} Let  $1<p<\infty$ with $p\in \cH_{\Le}$. Existence for   $(D)_{p'}^{\Le^*} $   implies  uniqueness for $(R)_{p}^{\Le}$  (modulo constants).
\end{thm}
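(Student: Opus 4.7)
Let $u$ be a null solution of $(R)_{p}^{\Le}$: $\Le u = 0$ on $\reu$, $\NT(\nabla u) \in L^{p}(\R^n)$, and Whitney averages of $u$ converging to zero a.e.\ on $\R^n$. Working modulo constants, the goal reduces to $\nabla u \equiv 0$ on $\reu$.

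The plan is to run the duality argument dual to the one for Theorem~\ref{thm:uniqueDir}. For each $\phi \in C_{0}^\infty(\R^n; \IC^m)$, I invoke existence of $(D)_{p'}^{\Le^*}$ to produce a solution $v$ with boundary data $\phi$: $\Le^* v = 0$ on $\reu$, $\NT v \in L^{p'}(\R^n)$, and Whitney averages of $v$ converging to $\phi$ a.e. The target is the formal Green's identity
\begin{equation*}
\int_{\R^n} \phi \cdot \dnuA u \, dx \;=\; \int_{\R^n} u \cdot \dnuAstar v \, dx,
\end{equation*}
in which the traces are taken at $t=0$. Since $u|_{t=0}=0$, the right-hand side vanishes, and as $\phi$ sweeps a dense class this forces $\dnuA u|_{t=0} = 0$.

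The substantial step is justifying this identity rigorously. The approach is to integrate by parts the relation $v \cdot \Le u - \Le^* v \cdot u = 0$ against a smooth cut-off supported in a truncated region $\{\tau < t < T\} \cap B(0,R)$, and to send $\tau \to 0^+$, then $T \to \infty$, then $R \to \infty$. Near $t = \tau$, convergence of Whitney averages of $u$ to zero and of $v$ to $\phi$, coupled with H\"older's inequality applied to $\NT(\nabla u) \in L^p$ and $\NT v \in L^{p'}$, controls the limit. Decay as $t \to \infty$ and $|x| \to \infty$ is provided by the semigroup representation $\gradA u(t,\cdot) = e^{-t|DB|} \gradA u(0,\cdot)$ of the conormal gradient on $H^p_{DB} = H^p_D$, available precisely because $p \in \cH_{\Le}$ (cf.\ Section~\ref{sec:layer}), together with the analogous representation for $v$ in the dual regime. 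This truncation step is the principal technical obstacle, as one cannot appeal to pointwise trace regularity.

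Once $\dnuA u|_{t=0}=0$ is established, both components of $\gradA u|_{t=0}$ vanish: the tangential part $\nabla_x u|_{t=0}$ because $u|_{t=0}=0$, and the normal part by the previous step. Injectivity of the map from boundary data to solution in the $H^p_{DB}$ semigroup representation then forces $\gradA u \equiv 0$ on $\reu$, hence $\nabla u \equiv 0$, and so $u$ is constant by connectedness of $\reu$.
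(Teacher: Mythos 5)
Your approach is genuinely different from the paper's, and it runs into gaps at exactly the spots the paper is designed to avoid.

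The paper's proof (Sections 3.3 and 5.2) never forms a Green's identity on the hyperplane $t=0$ and never speaks of traces $\dnuA u|_{t=0}$ or $\dnuAstar v|_{t=0}$. Instead it tests $u$ against a compactly supported $G = \partial_t\wt G$ living in the \emph{interior}, solves the full-space equation $\Le^* H = G$ on $\ree$ by Lax--Milgram, forms the trace $h = H(0,\cdot)$ (which is controllable thanks to the single layer representation, Lemma~\ref{lem:NTHp}), then invokes the \emph{existence} hypothesis to subtract off a half-space solution $H_1$ with the same boundary datum $h$. The object $H - H_1$ vanishes at $t=0$, which is the crucial cancellation that lets the cutoff $\theta$ be sent to $1$ using only Whitney-average control, Caccioppoli and reverse H\"older. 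No semigroup representation for the unknown $u$ is ever used, and only the single layer bounds of Lemma~\ref{lem:bl} are borrowed from \cite{AusSta}.

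Your proposal instead takes $\phi\in C_0^\infty$, produces $v$ by existence for $(D)_{p'}^{\Le^*}$, and tries to prove the boundary Green's identity directly. Two steps do not go through as written.

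First, you lean on the semigroup representation $\gradA u(t,\cdot) = e^{-t|DB|}\gradA u(0,\cdot)$ for the \emph{unknown} $u$ and on the injectivity of the map from $H^p_{DB}$-data to semigroup solutions. That representation is a substantial theorem of \cite{AM}, not a formal consequence of $p\in\cH_{\Le}$, and the whole point of Section~\ref{sec:pnot2} is to avoid needing it: the paper only requires the trace result of Lemma~\ref{lem:trace} (which is Kenig--Pipher type) for $u$. If you are prepared to import the full representation theorem of \cite{AM}, the argument can indeed be set up along the lines you describe; but then one should say explicitly that this is the \cite{AM} machinery, which this paper deliberately circumvents with the $H$, $H_1$ construction.

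Second, and more concretely, the ``analogous representation for $v$ in the dual regime'' is not available. The hypothesis is \emph{existence} for $(D)_{p'}^{\Le^*}$: given $\phi$, some weak solution with $\NT v \in L^{p'}$ and the right boundary convergence exists. Nothing in that hypothesis forces the provided $v$ to be the semigroup (single layer/Poisson) extension, and since you are only assuming existence, not well-posedness, you cannot identify the two. So the decay of $v$ at $t\to\infty$ and the limits of the boundary integrals involving $\dnuAstar v(\tau,\cdot)$ as $\tau\to 0^+$ are not controlled. The paper's subtraction trick circumvents this entirely: $H$, whose trace is used, is the canonical Lax--Milgram solution (with explicit single layer representation), while $H_1$ is whatever solution existence provides — no representation for $H_1$ is needed because one only uses $\NT H_1 \in L^{p'}$ together with the vanishing of $H - H_1$ at the boundary.

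In short: your sketch is a plausible route if you already allow yourself the representation theory of \cite{AM} for both $u$ and the Dirichlet solution, but the second of these is not a consequence of the stated hypothesis, and neither is used by the paper, whose cutoff-and-subtraction argument is both more elementary and closes the gaps you would otherwise have to fill in the $\tau\to 0^+$ and $T\to\infty$ limits.
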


The interval  $\cH_{\Le}$ equals $(1-\varepsilon'(\Le), 2+\varepsilon(\Le))$ in case of (DGNM) for $\Le^*$ for example (with $0<\varepsilon(\Le)\le \infty$) and even some  conditions weaker than (DGNM) suffice, see Section 13 in \cite{AusSta} for details. We note that  \cite{HMaMo} uses a similar exponent $2+\varepsilon$ but we do not know whether it agrees with our $2+\varepsilon(\Le)$. More specifically, we have the following corollaries, compare with Proposition 8.19(i)\&(ii) in \cite{HMaMo}.
  
\begin{cor}\label{cor:uniqueDir} Assume (DGNM) for $\Le$ and  $(2+\varepsilon(\Le^*))'<p<\infty$. Existence for  $(R)_{p'}^{\Le^*} $   implies  uniqueness for $(D)_{p}^\Le$. 
\end{cor}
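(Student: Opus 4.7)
The plan is to deduce this corollary as a direct specialization of Theorem~\ref{thm:uniqueDir}. The only thing to verify is that the explicit hypothesis $(2+\varepsilon(\Le^*))'<p<\infty$ together with (DGNM) for $\Le$ implies the abstract hypothesis $p'\in \cH_{\Le^*}$ required by that theorem.

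First, I would record the concrete description of $\cH_{\Le^*}$ under the (DGNM) assumption. The discussion immediately following Theorem~\ref{thm:uniquereg} states that $\cH_{\Le}=(1-\varepsilon'(\Le),2+\varepsilon(\Le))$ whenever (DGNM) holds for $\Le^*$, with references to Section~13 of \cite{AusSta} for the proof of this identification. Applying this statement with $\Le$ replaced by $\Le^*$ (so that the required (DGNM) becomes (DGNM) for $\Le^{**}=\Le$, our hypothesis) yields
\begin{equation*}
   \cH_{\Le^*} \;=\; (1-\varepsilon'(\Le^*),\, 2+\varepsilon(\Le^*)),
\end{equation*}
with $\varepsilon(\Le^*)>0$ (and, by the conventions of \cite{AusSta} recalled above, $\varepsilon'(\Le^*)\ge 0$, so that $1\in \cH_{\Le^*}$).

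Next, I would do the bookkeeping on conjugate exponents. The assumption $p>(2+\varepsilon(\Le^*))'$ is equivalent to $p'<2+\varepsilon(\Le^*)$, which gives the upper bound. The lower bound $p'>1-\varepsilon'(\Le^*)$ is automatic, since $p<\infty$ forces $p'>1\ge 1-\varepsilon'(\Le^*)$. Hence $p'\in \cH_{\Le^*}$.

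With $p'\in \cH_{\Le^*}$ established, Theorem~\ref{thm:uniqueDir} applies verbatim and produces the desired implication: existence for $(R)_{p'}^{\Le^*}$ implies uniqueness for $(D)_{p}^{\Le}$. I do not anticipate any genuine obstacle in this corollary: the substantive content sits entirely in Theorem~\ref{thm:uniqueDir} and in the identification of $\cH_{\Le^*}$ from \cite{AusSta}; the corollary is purely a reformulation of the abstract hypothesis in the more classical endpoint language $(2+\varepsilon(\Le^*))'<p<\infty$ familiar from \cite{HMaMo}.
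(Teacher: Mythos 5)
Your proposal is correct and matches the paper's intent: the paper states Corollaries~\ref{cor:uniqueDir} and \ref{cor:uniquereg} without proof, treating them as immediate specializations of Theorems~\ref{thm:uniqueDir} and \ref{thm:uniquereg} once one knows $\cH_{\Le^*}=(1-\varepsilon'(\Le^*),\,2+\varepsilon(\Le^*))$ under (DGNM) for $\Le$, and your bookkeeping on conjugate exponents is exactly what that specialization requires. The only point you should flag more carefully is the inequality $1-\varepsilon'(\Le^*)\le 1$: the paper records only $0<\varepsilon(\Le)\le\infty$ and never explicitly says $\varepsilon'(\Le)\ge 0$, so you are implicitly importing from \cite{AusSta} that the coincidence interval under (DGNM) extends down to (and in fact below) $p=1$, which is what makes the lower bound on $p'$ automatic from $p<\infty$.
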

 
\begin{cor}\label{cor:uniquereg} Assume (DGNM) for $\Le^*$ and  $1<p<2+\varepsilon(\Le)$. Existence for   $(D)_{p'}^{\Le^*} $   implies  uniqueness for $(R)_{p}^{\Le}$   (modulo constants).
\end{cor}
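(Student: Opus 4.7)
The plan is to obtain Corollary~\ref{cor:uniquereg} as a direct specialisation of Theorem~\ref{thm:uniquereg}. The whole content of the corollary beyond the theorem lies in identifying an explicit range for $p$ in which the abstract condition $p \in \cH_{\Le}$ is known to hold under the classical (DGNM) hypothesis; everything else is inherited from the theorem.

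First I would invoke the description of $\cH_{\Le}$ recalled immediately above the corollary: under (DGNM) for $\Le^*$, the analysis of the Hardy-space coincidence $H^p_{DB} = H^p_D$ in Section~13 of \cite{AusSta} gives $\cH_{\Le} = (1-\varepsilon'(\Le),\, 2+\varepsilon(\Le))$ for some $\varepsilon'(\Le) > 0$ and some $0 < \varepsilon(\Le) \le \infty$. In particular, $(1, 2+\varepsilon(\Le)) \subset \cH_{\Le}$.

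Given any $p$ in the range $1 < p < 2+\varepsilon(\Le)$ of the corollary, this inclusion places $p$ in $\cH_{\Le}$, so the hypotheses of Theorem~\ref{thm:uniquereg} are met. Applying that theorem, existence for $(D)_{p'}^{\Le^*}$ yields uniqueness (modulo constants) for $(R)_p^{\Le}$, which is exactly the claim. There is no genuine obstacle at the level of the corollary: the substantive work has already been packaged into Theorem~\ref{thm:uniquereg} — which relies on the single layer representation \eqref{eq:L-1} of $\Le^{-1}$ and the duality strategy outlined in the introduction — while the identification of the interval $\cH_{\Le}$ under (DGNM) is imported from \cite{AusSta}.
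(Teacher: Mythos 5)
Your proposal is correct and follows exactly the route the paper intends: the corollary is an immediate specialisation of Theorem~\ref{thm:uniquereg} once one uses the fact, recalled just above the corollary, that under (DGNM) for $\Le^*$ the interval $\cH_{\Le}$ equals $(1-\varepsilon'(\Le), 2+\varepsilon(\Le))$ and hence contains $(1, 2+\varepsilon(\Le))$. The paper gives no separate proof of Corollary~\ref{cor:uniquereg} precisely because nothing more needs to be said.
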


Well-posedness of a boundary value problem is the conjunction of both existence of a solution for all data and uniqueness. A stronger notion, appearing implicitly in many earlier works, is that of compatible well-posedness: It means well-posedness and that the unique solution agrees with the energy solution obtained from the Lax-Milgram lemma, whenever the boundary data is admissible for energy solutions. Theorem~\ref{thm:uniqueDir} then has the following interesting consequence we shall discuss in detail in Section~\ref{sec:proofSF}. We define the square function $SF$ of a measurable function $F$ by   
\begin{equation}
\label{eq:sfdef}
 S F(x): = \left( \iint_{\Gamma_{a}(x)}  |F(t,y)|^2\ \frac{dt \, dy}{t^{n+1}}\right)^{1/2}, \qquad x\in \R^n,
\end{equation}
where $a>0$ is a fixed number called the aperture of the cone $\Gamma_{a}(x):= \{(t,y): t>0, |x-y|<at\}$.
 
\begin{cor}\label{cor:wp} Let $1<p<\infty$ with $p'\in \cH_{\Le^*}$. Assume   $(R)_{p'}^{\Le^*} $ is  well-posed (resp.\ compatible well-posed) modulo constants.  Then so is $(D)_{p}^\Le$. Moreover, given $f\in L^p(\R^n;\IC^m)$, the weak solution $u$ with data $f$ has further regularity $u \in C_{0}([0,\infty); L^p(\R^n; \IC^m))$, satisfies the square function estimate $\|S(t\nabla u)\|_{p}<\infty$ and there is comparability
\begin{equation} \label{eq:equiv}
\|\NT(u)\|_{p} \sim  \|S (t\nabla u)\|_{p} \sim \sup_{t\ge 0} \|u(t, \cdot )\|_{p}\sim \|f\|_{p}.
\end{equation} 
In addition, the non-tangential convergence improves to $L^2$ averages, that is, for a.e.\ $x\in \R^n$,
\begin{equation}
\label{eq:cvL2}
\lim_{t \to 0} \bariint_{W(t,x)} |u(s,y)-f(x)|^2\ {ds \, dy} = 0.
\end{equation}
\end{cor}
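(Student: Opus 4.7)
The plan has two parts: deduce well-posedness of $(D)_p^\Le$ from the hypothesis, and then read off the additional quantitative properties of the solution from the abstract layer-potential machinery of \cite{AusSta}.

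Uniqueness is free: since well-posedness of $(R)_{p'}^{\Le^*}$ entails existence, Theorem~\ref{thm:uniqueDir} applies and yields uniqueness for $(D)_p^\Le$. For existence, given $f \in L^p(\R^n;\IC^m)$, I would construct the candidate solution through the single layer operator $\mS_t$ associated with $\Le$, the central object of the paper. The assumption $p' \in \cH_{\Le^*}$ secures the identification of abstract and concrete Hardy spaces for $D^*B^*$, and the invertibility at the boundary implicit in the well-posedness of $(R)_{p'}^{\Le^*}$, combined with duality between $H^p_{DB}$ and $H^{p'}_{D^*B^*}$, should furnish an element $g$ in the appropriate spectral subspace at $L^p$-level whose abstract Poisson extension $u(t,\cdot) := \mS_t g$ traces back to $f$ at the boundary and solves $\Le u = 0$ weakly on $\reu$.

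The quantitative content of \eqref{eq:equiv} then follows from the results of \cite{AusSta} applied to $DB$: the $H^\infty$ calculus together with the associated tent-space and maximal-function inequalities give $\|\NT u\|_p \sim \|S(t\nabla u)\|_p \sim \|g\|_{H^p_{DB}}$, while the semigroup-like behavior of $\mS_t$ provides strong continuity on $L^p$, decay at infinity (hence $u \in C_0([0,\infty); L^p)$), and $\sup_{t \ge 0}\|u(t,\cdot)\|_p \sim \|g\|_{H^p_{DB}}$. All four norms are equivalent to $\|f\|_p$ because of the invertibility produced in the previous step. For the upgrade \eqref{eq:cvL2}, Poincar\'{e}'s inequality on the Whitney region $W(t,x)$ reduces the $L^2$-average of $u - f(x)$ to the $L^1$-average (already controlled by the boundary convergence built into $(D)_p^\Le$) plus a term of size $t^2 \bariint_{W(t,x)} |\nabla u|^2$; the latter tends to zero for almost every $x$ because $\|S(t \nabla u)\|_p$ is finite, by a standard Lebesgue-density argument.

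Compatible well-posedness transfers by the usual reasoning: when $f$ is admissible for Lax--Milgram, both the constructed $u$ and the energy solution solve $(D)_p^\Le$, and uniqueness identifies them. The main obstacle I anticipate is the existence step, namely making rigorous the invertibility transfer from well-posedness of $(R)_{p'}^{\Le^*}$ to $L^p$-invertibility of the boundary trace of $\mS_t$ for $\Le$. This is precisely the layer-potential technology the paper is dedicated to putting in place, and it requires careful handling of the abstract-versus-concrete Hardy space dualities simultaneously at exponents $p$ and $p'$.
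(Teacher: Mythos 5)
Your uniqueness step is correct and identical to the paper's. Your argument for the improvement \eqref{eq:cvL2} via Poincar\'e's inequality is essentially sound: the $L^1$-average boundary convergence controls $\bariint_{W(t,x)}u - f(x)$, and the finiteness of $S(t\nabla u)(x)$ for a.e.\ $x$ forces $t^2\bariint_{W(t,x)}|\nabla u|^2 \to 0$ by dominated convergence on the cone integral, since $W(t,x)\subset \Gamma_a(x)\cap\{s\sim t\}$ for a suitable aperture. This is a self-contained way to get \eqref{eq:cvL2}; the paper instead cites it directly from \cite{AM} (Corollary~1.4, which is a consequence of Theorem~9.9 in \cite{AusSta}).

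The existence step, however, contains a genuine gap that you already sensed. You propose to build the solution as $u_t = \mS_t g$, with $g$ found by inverting the boundary trace of the single layer and transferring invertibility from the well-posedness of $(R)_{p'}^{\Le^*}$. The paper explicitly warns (in the remark on representation by layer potentials) that under the hypothesis of this corollary there is \emph{no reason} for the relevant boundary operator to be invertible: well-posedness of $(R)_{p'}^{\Le^*}$ on the upper half-space alone does not yield $L^p$-invertibility of $\mS_0$, and even well-posedness on both half-spaces does not give it for the double layer. The only known route to such invertibility is Rellich estimates, which are absent here. So the ``invertibility transfer'' you flag as the main obstacle is not merely a technical difficulty to be worked out: under the stated hypotheses it is unavailable, and the layer-potential representation $u=\mS_t(\mS_0)^{-1}f$ is precisely what the paper argues one \emph{cannot} expect.

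The paper's route avoids this entirely. It invokes Theorems~1.6 and~1.9 of \cite{AM}, which establish a direct equivalence between (compatible) well-posedness of $(R)_{p'}^{\Le^*}$ and (compatible) well-posedness of the modified Dirichlet problem $(\wt D)_p^\Le$ posed with the square-function control $S(t\nabla u)\in L^p$ and strong $L^p$ boundary convergence. Given $f\in L^p$, the unique solution to $(\wt D)_p^\Le$ already carries, \emph{a priori}, all the quantitative information you were hoping to extract from the layer-potential picture: by Corollary~1.4 of \cite{AM} it belongs to $C_0([0,\infty);L^p)$, satisfies $\|\NT u\|_p\lesssim\|S(t\nabla u)\|_p$, $\sup_t\|u(t,\cdot)\|_p\lesssim\|S(t\nabla u)\|_p$, and has a.e.\ $L^2$-averaged boundary convergence. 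The reverse inequality $\|f\|_p\lesssim\|\NT u\|_p$ is just Fatou. This shows the $(\wt D)_p^\Le$-solution is also a solution of $(D)_p^\Le$ with all the claimed properties, and uniqueness for $(D)_p^\Le$ is Theorem~\ref{thm:uniqueDir}. In short: the paper deduces existence for $(D)_p^\Le$ by relating it to an \emph{already solved} well-posed problem $(\wt D)_p^\Le$, not by inverting a boundary layer operator.
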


\subsection{Comparison to earlier results}

We comment here on the formulations of the problems and statements in relation to existing literature.

\begin{rem}[On convergence at the boundary for the Dirichlet problem]  \label{rem:boundary convergence}
 There is no trace theorem for the space of measurable functions $u$ with $\|\NT u\|_{p}<\infty$. Hence, existence of boundary values is part of the Dirichlet problem and does not follow from the interior control. If we look for non-tangential approach almost everywhere,  this the weakest possible condition. 
 But we may also choose a different convergence to the boundary data, such as 
 strong $L^p$ convergence $u(t, \cdot)\to f$ as $t \to 0$ on compact subsets of $\R^n$ as considered in \cite{HMaMo} under (DGNM). In that case, $\NT u$ can be replaced by the usual pointwise supremum on cones denoted by $N_{*}u$. As $\|u(t, \cdot)\|_{p}\le \|N_{*}u\|_{p}$ for all $p$ and $t>0$, the $L^p$ convergence on compact sets,  is also natural. We shall see that a minor modification of our arguments will cover this formulation of the Dirichlet problem and even a weaker form of $L^p_{loc}$ convergence. Note carefully that $\|\NT u\|_{p}<\infty$ does not imply $L^p$ boundedness for solutions when $p>2$ (see below).
\end{rem}

\begin{rem}[On the formulation of the Dirichlet problem]
 We use  $\NT = \widetilde{N}_{*,2}$ in the Dirichlet problem.  For complex equations, it makes a difference to consider $\NT$ or $\NTq$ with $q=p$, see \cite{May},  as solutions may not be locally $p$-integrable. The choice $q=2$ is most natural to overcome this difficulty and we could even use $\NTone$ by invoking reverse H\"older estimates. 
\end{rem}

\begin{rem}[On convergence at the boundary for the regularity problem]
 For the regularity problem, there is a trace theorem for the space of $L^2_{loc}$ functions satisfying $\|\NT(\nabla u)\|_{p}<\infty$, as is implicit in \cite{KP}. The Whitney averages converge almost everywhere in approaching the boundary,  the limit belongs to the homogeneous Sobolev space $\dot W^{1,p}(\R^n)$ and  Ces\`aro means $\barint_{t}^{2t} u \ ds$ converge in the sense of distributions modulo constants to the same limit, see Lemma~\ref{lem:trace} . Hence, the boundary condition in the regularity problem is implied by the interior control.  Actually, Theorem 1.1 of  \cite{AM} shows  that all solutions in this class for the range of $p$ in the statement enjoy convergence $ \nabla_{x} u(t,\cdot) \to  \nabla_{x} u(0,\cdot)$ strongly in $L^p$ as $t \to 0$. So, this could be taken as definition for the convergence to the boundary data as well. 
\end{rem}

\begin{rem}[On comparability of $S$ and $\NT$]
 For $p$ as in Corollary~\ref{cor:wp}, Theorems~1.6 and Theorem~1.9 in \cite{AM} show that (compatible) well-posedness for $(R)_{p'}^{\Le^*} $  is equivalent to (compatible) well-posedness for a variant $(\wt{D})_{p}^\Le$ of the Dirichlet problem with the non-tangential maximal function being replaced by the square function $S(t\nabla u)$ in the $L^p$-control. Owing to Corollary 1.4 in \cite{AM}, we have $\|\NT(u)\|_{p} \lesssim  \|S(t\nabla u)\|_{p}$ \emph{a priori} for any weak solution in this range of $p$ and $u(t,\cdot)$ converges to its boundary data strongly in $L^p$ if the right hand side is finite. Thus, Corollary~\ref{cor:wp} can rephrased as saying that the (compatible) well-posedness of $(\wt{D})_{p}^\Le$ implies that of $(D)_{p}^\Le$. 
 
 It would be interesting to prove the converse, at least for the range of $p$ above. For equations, that is $m=1$, with real valued $t$-independent coefficients, the real variable argument in \cite{HKMP1} shows $\|\NT(u)\|_{p} \sim  \|S(t\nabla u)\|_{p}$ for any weak solution. Hence, both Dirichlet problems are \textit{a priori} the same and the converse holds. Using the equivalence between $(\wt{D})_{p}^\Le$ and $(R)_{p'}^{\Le^*}$ mentioned above, this also provides a direct way for deducing the main result on well-posedness of the regularity problem $(R)_{p'}^{\Le^*}$ in \cite{HKMP2} for real coefficients from \cite{HKMP1}. For equations with complex coefficients and systems though, similar conclusions remain unknown.
\end{rem}

\begin{rem}[On representation by layer potentials]
 Another aspect of the theory is whether $u$ in Corollary~\ref{cor:wp} can be represented  as $u=\mD_{t}(\mD_{0^+})^{-1}f$, where $\mD_{t}$ is the double layer operator, also defined abstractly and proved to be bounded on $L^2$ in \cite{R} and on $L^p$ in this range of $p$ in \cite{AusSta}. There is no reason to believe that $\mD_{0^+}$ is invertible under the assumptions in Corollary~\ref{cor:wp}. Even well-posedness of the regularity problem on both half-spaces is not enough to conclude this: however, it gives the different representation $u=\mS_{t}(\mS_{0})^{-1}f$ using the single layer  operator. As of today, the only available method to prove invertibility  is via  the so-called Rellich estimates.  This was done first in \cite{V} when $p\ge 2$ for Laplace's equation in Lipschitz domains and has been extended to a larger class of equations (perturbations of real symmetric coefficients) in \cite{AAAHK} by developing the layer potential approach  and using the Rellich estimates of \cite{JK} for invertibility. Note that the Rellich estimates give access to solvability of  Neumann problems as well, which is strong additional information.    
\end{rem}

\begin{rem}[On the ellipticity condition]
\label{rem:ellipticity}
 Given $u \in \dot W^{1,2}(\ree)$, we can take $f(x) = \nabla u(t,x)$ for each $t \in \R$ in \eqref{eq:accrassumption} and integrate in $t$ to obtain G\aa{}rding's inequality
\begin{align} \label{eq:Ga}
 \iint_{\ree} \Re( A(x) \nabla u(t,x) \cdot \nabla u(t,x)) \ dx \, dt \geq \lambda \iint_{\ree} |\nabla u(t,x)|^2 \ dx \, dt.
\end{align}
We shall observe that our proofs of Theorem~\ref{thm:uniqueDir} and \ref{thm:uniquereg} in the case $p=2$ --- and even $p$ nearby --- only require \eqref{eq:Ga}. In particular, this gives access to uniqueness of boundary value problems for Lam\'e-type systems \cite{MMMM}, which typically satisfy G\aa{}rding's inequality but not the strict accretivity condition \eqref{eq:accrassumption}.
\end{rem}

\subsection{Strategy to the proofs}
The formal strategy is the same for both theorems and is adopted from earlier references, in particular \cite{AAAHK, HMaMo}. Let $u$ be a solution of $\Le u=0$ on $\reu$ with zero boundary condition. We take $G\in C_{0}^\infty(\ree; \IC^m)$ with support contained in some region $[a,b]\times B(0, c)$ contained in $\reu$. We want to show that $\dual u G =0$. 
We then pick a second function $\theta$ supported in $\reu$, real-valued, Lipschitz continuous  and equal to $1$ on the support of $G$. 
Finally, we let $H$ be a weak solution  of $\Le ^*H=G$ on $\ree$. As $u\theta$ is a test function for  this equation, we have
$$
\dual u G= \dual {u\theta} G = \dual {A\nabla (u\theta)} {\nabla H}.
$$
Next,
\begin{align*}
\label{}
\dual {A\nabla (u\theta)} {\nabla H}    & = \dual {Au\nabla \theta}{\nabla H} +  \dual {A \theta \nabla u }{\nabla H}  \\
    & =  \dual {Au\nabla \theta}{\nabla H} - \dual {A\nabla u }{H\nabla \theta  } + \dual {A\nabla u }{\nabla ( \theta  H)},
\end{align*}
and the last term vanishes because $\theta H$ is a test function for $\Le u=0$. All brackets here can be expressed by $L^2$ complex inner products and in accordance with our shorthand notation $-\div A \nabla u = 0$ for \eqref{eq:divform} we abbreviated $Au\nabla \theta = A_{i,j}^{\alpha, \beta} u^\beta \partial_j \theta$ and $H \nabla \theta = H^\alpha \partial_i \theta$, where sums are taken over repeated indices.

Now the existence hypothesis comes into play. We let $h:=H(0, \cdot)$ (provided it makes sense) and let 
$H_{1}$ be a solution to the adjoint problem $\Le ^*H_{1}=0 $ on $\reu$  with  boundary condition $h$. 
We may apply the same decomposition to $\dual {A\nabla (u\theta)} {\nabla H_{1}}$ and remark that this term vanishes since $u\theta$ is a test function for $\Le ^*H_{1}=0 $. Hence, we obtain
\begin{equation}
\label{eq:uG}
\dual u G= \dual {Au\nabla \theta}{\nabla (H-H_{1})} - \dual {A\nabla u }{  (H-H_{1})\nabla \theta}.
\end{equation}
We remark that $u$ and $H-H_{1}$ both vanish at the boundary. In fact, the reason to use $H_{1}$ is to help convergence near the boundary. The symmetry in $u$ and $H-H_{1}$ also indicates why the results can go both ways. 

The goal is then to show that these two terms tend to $0$ if we let $\theta\to 1$ everywhere on $\reu$. The heart of the matter is  to prove estimates on $H$ and $h$,   using our assumption, instead of relying on estimates for the fundamental solutions to represent $H$ in \cite{AAAHK, HMaMo} under (DGNM). For us, the assumption implies  boundedness properties of single layer operators for a certain range of spaces and we shall use this as a black box, once we have shown the representation \eqref{eq:L-1}.

Some particular choice of $\theta$ will facilitate the proofs. We are going to pick $\theta$ as follows. We fix $\chi\in C_{0}^\infty(\R^n)$ to be $1$ on $B(0,1)$ and with support in $B(0,2)$. We let $\eta$ be the continuous, piecewise linear function, which  is $0$ on $[0,2/3] $ and $1$ on $ [3/2, \infty)$ and linear in between. We pick $M> 2c$, $0<\varepsilon<a/4 $ and $2b< R<\infty$ to finally set 
\begin{align*}
\theta(t,x):= \chi(x/M)\eta(t/\varepsilon)(1-\eta(t/R)).
\end{align*}

\subsection{Standard estimates on weak solutions}
Here are some standard properties on weak solutions to $\Le u=0$ in a domain $\Omega \subset \ree$ we shall freely use throughout. The reader can refer to \cite{G} for the elliptic equations or to \cite{B} for systems. With regard to these references, we remark that reverse H\"older inequalities share the general feature that Lebesgue exponents on both sides can be lowered as one pleases, see Theorem~2 in \cite{IN} or Theorem~B.1 in \cite{BCF} for a particularly simple proof.

\medskip

\qquad Caccioppoli's inequality: $\displaystyle \bariint_{W(t,x)} |\nabla u|^2 \lesssim \frac{1}{t^2} \bariint_{\wt W(t,x)} | u|^2$.

\

\qquad Reverse H\"older inequality on $\nabla u$: $\displaystyle \bigg(\bariint_{W(t,x)} |\nabla u|^2\bigg)^{1/2} \lesssim  \bariint_{\wt W(t,x)} | \nabla u|$. 

\

\qquad Reverse H\"older inequality on $u$: $\displaystyle \bigg(\bariint_{W(t,x)} | u|^2\bigg)^{1/2} \lesssim  \bariint_{\wt W(t,x)} |  u|$. 

\

\noindent Here $ \wt W(t,x)$ is another Whitney region, with strictly larger parameters than $W(t,x)$ and we assume that the former is compactly included in $\Omega$. The implied constants depend only on ellipticity of $A$, Whitney parameters and the distance of $\wt W(t,x)$ to $\partial \Omega$. The reverse H\"older inequalities can also come with $L^p$-averages for some $p>2$ on the left but we shall not need such improvements.
%%%%%%%%%%%%%%%%%%%%%%%%%%%%%%%%%%%%%%%%%%%%%%%%%%%%%%%%%%%%%%%%%%%%%%%%%%%%%%%%%%%%%%%%%%%%%%%%%%%%%%%%%%%%%%%%%%%%%%%%%%%%%%%%%%%%%%%%%%%%%%%%%%%%%%%%%%%%%%%%%%%% 
\section{The case \texorpdfstring{$p=2$}{p=2}}\label{sec:p=2}

To illustrate the simplicity of our argument,  we present here the proofs of our main results in the case $p=2$. For the purpose of this section only, it will be sufficient to assume the weaker ellipticity condition \eqref{eq:Ga}.

\subsection{Estimate for \texorpdfstring{$\Le^{-1}$}{L-1}.}

First,  $\Le:\dot W^{1,2}(\ree; \IC^m) \to \dot W^{-1,2}(\ree;\IC^m)$ is invertible as a consequence of the Lax-Milgram lemma and \eqref{eq:Ga}. This is how we understand $\Le^{-1}$. The adjoint of $\Le$ is associated with the matrix $A^*$. 

\begin{lem}\label{lem:L-1} Let $G\in C_{0}^\infty(\ree; \IC^m) \cap \dot W^{-1,2}(\ree; \IC^m)$ and let $H\in \dot W^{1,2}(\ree; \IC^m)$ solve $\Le H=G$ in $\ree$. Then 
\begin{enumerate}
  \item For all integers $k\ge 1$, $\partial_{t}^kH$ exists in $W^{1,2}(\ree; \IC^m)$ and $\Le(\partial_{t}^kH)= \partial_{t}^kG$. 
  \item For all integers $k\ge 1$, $\partial_{t}^kH \in C_{0}(\R; W^{1,2}(\R^n; \IC^m))$ with $t \in \R$ the distinguished variable.
\end{enumerate} 
\end{lem}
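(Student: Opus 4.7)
The strategy leverages the $t$-translation invariance of $\mathcal{L}$, inherited from the $t$-independence of $A$, via a difference-quotient argument in $t$. Denoting $\tau_h u(t,x) := u(t+h,x)$, the equation $\mathcal{L} H = G$ and the commutation $\mathcal{L} \tau_h = \tau_h \mathcal{L}$ yield, for $h \neq 0$,
\begin{equation*}
\frac{\tau_h H - H}{h} = \mathcal{L}^{-1}\!\left(\frac{\tau_h G - G}{h}\right),
\end{equation*}
where $\mathcal{L}^{-1}: \dot W^{-1,2}(\ree;\IC^m) \to \dot W^{1,2}(\ree;\IC^m)$ is the isomorphism provided by Lax-Milgram applied with \eqref{eq:Ga}.

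The key observation is that $(\tau_h G - G)/h = \partial_t F_h$ with $F_h(t,x) := \int_0^1 G(t+sh,x)\,ds$, and $F_h$ is a bounded, uniformly compactly supported family in $L^2(\ree)$ converging to $G$ in $L^2$ as $h \to 0$. Using the identification of $\dot W^{-1,2}(\ree)$ with distributions of the form $\operatorname{div}_{(t,x)} F$ for $L^2$ vector fields $F$, the right-hand side converges to $\partial_t G$ in $\dot W^{-1,2}$. By boundedness of $\mathcal{L}^{-1}$, the left-hand side converges in $\dot W^{1,2}$, and simultaneously to $\partial_t H$ in $\mathcal{D}'(\ree)$. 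Hence $\partial_t H \in \dot W^{1,2}$ and $\mathcal{L}(\partial_t H) = \partial_t G$. Since $\partial_t^{k-1} G \in C_0^\infty(\ree; \IC^m)$ for every $k \geq 1$, iteration produces $\partial_t^k H \in \dot W^{1,2}$ with $\mathcal{L}(\partial_t^k H) = \partial_t^k G$.

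To upgrade from $\dot W^{1,2}$ to $W^{1,2}$, observe that the $\dot W^{1,2}(\ree)$ semi-norm controls $\partial_t$ in $L^2$; thus $\partial_t^{k-1} H \in \dot W^{1,2}$ forces $\partial_t^k H \in L^2$. Combined with the previous step, $\partial_t^k H \in L^2 \cap \dot W^{1,2} = W^{1,2}(\ree)$, proving~(i). For~(ii), Fubini gives $W^{1,2}(\ree) \hookrightarrow L^2(\R_t; W^{1,2}(\R^n_x))$, and (i) now says that both $\partial_t^k H$ and $\partial_t(\partial_t^k H) = \partial_t^{k+1} H$ lie in $L^2(\R_t; W^{1,2}(\R^n_x))$. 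Hence $\partial_t^k H \in H^1(\R_t; W^{1,2}(\R^n_x))$, and the Sobolev embedding $H^1(\R; X) \hookrightarrow C_0(\R; X)$ valid for any Hilbert space $X$ concludes the proof.

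The only real delicacy is to obtain convergence in $\dot W^{-1,2}$ of the difference quotient of $G$, which requires identifying it as a $t$-derivative of an $L^2$ function rather than merely as an $L^2$ function itself; this is essential because $L^2(\ree)$ does not continuously embed into $\dot W^{-1,2}(\ree)$.
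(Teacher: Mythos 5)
Your proof is correct and follows essentially the same route as the paper's: difference quotients in $t$ exploiting the $t$-independence of $A$, the gradient bound $\|\partial_t^{k+1}H\|_2 \leq \|\nabla\partial_t^k H\|_2$ to upgrade $\dot W^{1,2}$ to $W^{1,2}$, and the one-dimensional Sobolev embedding in $t$ for part (ii). Your explicit identification $(\tau_h G - G)/h = \partial_t F_h$ with $F_h \to G$ in $L^2$ is a welcome detail: it makes precise the convergence in $\dot W^{-1,2}(\ree)$ that the paper dispatches with the phrase ``method of difference quotients,'' and you correctly flag why mere $L^2$ convergence of the quotients would not suffice.
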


\begin{proof} Note that for $n\ge 3$ we have $C_{0}^\infty(\ree; \IC^m) \subset \dot W^{-1,2}(\ree; \IC^m)$ by Sobolev embeddings. For $n=1,2$, the necessary and sufficient condition on $G$ is $\iint_{\ree} G=0$. The solution $H\in \dot W^{1,2}(\R^n;\IC^m)$ is defined by requiring for all $\varphi \in \dot W^{1,2}(\ree; \IC^m)$ that
 \begin{equation}
\label{eq:weaksol}
\dual {A\nabla H}{\nabla \varphi}= \dual G \varphi,
\end{equation}
where the second bracket is the (complex) duality between  $\dot W^{-1,2}(\ree; \IC^m)$ and $\dot W^{1,2}(\ree; \IC^m)$.
As $A$ is $t$-independent, and since $\partial_{t}^kG \in \dot W^{-1,2}(\ree; \IC^m)$  for all $k\ge 1$, the method of difference quotients and induction on $k$ allows us to differentiate \eqref{eq:weaksol} and to obtain $ \partial_{t}^kH \in \dot W^{1,2}(\ree; \IC^m)$ with
$$ 
 \dual {A\nabla  \partial_{t}^kH}{\nabla \varphi}= \dual  {\partial_{t}^kG} \varphi,
$$
for all $\varphi \in \dot W^{1,2}(\ree; \IC^m)$. This means $\Le(\partial_{t}^kH)= \partial_{t}^kG$. Moreover, $\nabla (\partial_{t}^kH) \in L^2(\ree, \IC^{m(1+n)})$ for all integers $k\ge 0$, showing in particular $\partial_{t}^{k+1}H \in L^2(\ree; \IC^m)$ for all $k\ge 0$. This completes the proof of (i).
 
For (ii), we use the vector-valued embedding $W^{1,2}(\R; L^2(\R^n; \IC^m)) \subset C_{0}(\R; L^2(\R^n; \IC^m))$. Since $\partial_{t}^kH,  \partial_{t}^{k+1}H$ are in $L^2(\ree; \IC^m)$, which we identify with $L^2(\R; L^2(\R^n; \IC^m))$ via Fubini's theorem, we obtain $\partial_{t}^kH \in C_{0}(\R; L^2(\R^n; \IC^m))$. Similarly we have $\nabla_{x} \partial_{t}^k H,  \nabla_{x} \partial_{t}^{k+1}H \in L^2(\ree; \IC^{mn})$ and hence $\nabla_{x} \partial_{t}^k H \in C_{0}(\R; L^2(\R^n; \IC^{mn}))$ as well. The conclusion follows. 
\end{proof}

\begin{lem}
\label{lem:NTH} 
Let $\wt G \in C_{0}^\infty(\ree; \IC^m) \cap \dot W^{-1,2}(\ree; \IC^m)$ and $\wt H: =\Le ^{-1}(\wt G)$. Set $G=\partial_{t}\wt G$ and $H=\partial_{t}\wt H$.  Then 
$G\in C_{0}^\infty(\ree; \IC^m) \cap \dot W^{-1,2}(\ree; \IC^m)$ and $H\in  W^{1,2}(\ree; \IC^m)$ solves $\Le H=G$ in $\ree$ with estimates 
\begin{align*}
 \|\NTone H\|_{2}+ \|\NTone(\nabla H)\|_{2}<\infty.
\end{align*}
\end{lem}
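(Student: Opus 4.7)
The plan is to exploit a one-dimensional Sobolev embedding in the transversal variable to reduce both non-tangential estimates to a routine Hardy-Littlewood application.

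First, $G = \partial_t \wt G$ is plainly in $C_0^\infty(\ree;\IC^m)$, and writing $G = \div(\wt G \, e_0)$ with $e_0$ the $t$-direction basis vector and $\wt G \in L^2(\ree;\IC^m)$ realises $G$ as an element of $\dot W^{-1,2}(\ree;\IC^m)$. Lemma~\ref{lem:L-1}(i) with $k = 1$ applied to $(\wt G, \wt H)$ gives $H = \partial_t \wt H \in W^{1,2}(\ree;\IC^m)$ with $\Le H = \partial_t \wt G = G$, and with $k = 2$ it gives $\partial_t H \in W^{1,2}(\ree;\IC^m)$. Consequently, both $H$ itself and each component of $\nabla H = (\partial_t H, \nabla_x H)$ belong to $L^2(\ree)$ together with their distributional $t$-derivatives.

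The central observation that handles both non-tangential bounds at once is the following auxiliary claim: for any $F \in L^2(\ree;\IC^N)$ with $\partial_t F \in L^2(\ree;\IC^N)$, the function $F^*(y) := \esssup_{s \in \R} |F(s,y)|$ belongs to $L^2(\R^n)$. Indeed, Fubini yields $F(\cdot,y) \in W^{1,2}(\R;\IC^N)$ for a.e.\ $y \in \R^n$, and the one-dimensional Sobolev embedding $W^{1,2}(\R) \hookrightarrow L^\infty(\R)$ gives $F^*(y) \le C \|F(\cdot,y)\|_{W^{1,2}(\R)}$ a.e., whence
\begin{equation*}
  \|F^*\|_{L^2(\R^n)}^2 \le C^2 \bigl(\|F\|_{L^2(\ree)}^2 + \|\partial_t F\|_{L^2(\ree)}^2\bigr) < \infty.
\end{equation*}

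The pointwise domination of the Whitney average by a Hardy-Littlewood maximal function is then immediate: for every $(t,x) \in \reu$,
\begin{equation*}
  \bariint_{W(t,x)} |F(s,y)| \, ds \, dy \le \bariint_{B(x,t)} F^*(y) \, dy \le \HL(F^*)(x),
\end{equation*}
where $\HL$ denotes the Hardy-Littlewood maximal operator on $\R^n$. Passing to the supremum over $t > 0$ and using $L^2$-boundedness of $\HL$ gives $\|\NTone F\|_2 \lesssim \|F^*\|_2 < \infty$. Specialising the claim to $F = H$ and to $F = \partial_j H$ for each $j \in \{0, 1, \ldots, n\}$ — both of which satisfy the hypothesis by the first paragraph — yields both desired bounds. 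There is no real obstacle here; the conceptual point is that the smoothness and compact support of $\wt G$ deliver two $t$-derivatives of $\wt H$ in $W^{1,2}(\ree)$, which is exactly the transversal regularity that collapses the spatially non-trivial non-tangential maximal function into a 1D Sobolev plus Hardy-Littlewood estimate.
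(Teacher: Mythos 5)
Your argument is correct, and it takes a genuinely different and more streamlined route than the paper's. The paper splits the supremum defining $\NTone H$ into a near-boundary region $t<4b$ and a far region $t\ge 4b$: the first is handled by writing $H(s,\cdot)-h$ as a $t$-integral of $\partial_s H$ and appealing to the trace $h=H(0,\cdot)\in W^{1,2}(\R^n;\IC^m)$, while the second requires a Cauchy--Schwarz trick with weight $s^2$ followed by Caccioppoli's inequality on a Whitney covering (using that $\Le H=0$ there) to control $\int_{2b}^\infty s^2|\partial_s H|^2\,ds\,dx$. Your observation that $H,\,\partial_t H\in L^2(\ree;\IC^m)$ already implies, by Fubini and the one-dimensional embedding $W^{1,2}(\R)\hookrightarrow C_0(\R)$, that $y\mapsto \sup_s|H(s,y)|$ is in $L^2(\R^n)$, bypasses all of this: it needs neither the trace nor Caccioppoli, no normalisation $b\ge 2$, and it avoids distinguishing the two regions. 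It also immediately yields the stronger conclusion that the \emph{pointwise} (unaveraged) non-tangential maximal function $N_* H$ lies in $L^2(\R^n)$, which the paper only remarks can be obtained ``with a little more work''. The only place you are slightly terse is the reduction of the distributional $t$-derivative on $\ree$ to a genuine weak derivative of $F(\cdot,y)$ for a.e.\ $y$ (so that $F(\cdot,y)\in W^{1,2}(\R)$ for a.e.\ $y$ and $y\mapsto\esssup_s|F(s,y)|$ is measurable); this is standard by testing against a countable dense family of $\psi\in C_0^\infty(\R)$, but it deserves a word. Both approaches require exactly the input you use from Lemma~\ref{lem:L-1}, namely $\partial_t\wt H,\,\partial_t^2\wt H\in W^{1,2}(\ree;\IC^m)$; neither uses the third derivative. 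Note, however, that the paper's near/far decomposition is not wasted effort in the larger scheme: the analogue of your 1D Sobolev trick is unavailable once one passes to $L^{p'}$ with $p\ne 2$ via the single-layer representation (Lemma~\ref{lem:NTHp}), where the paper reuses precisely this splitting together with operator decay of $\partial_s^2\mS_{t-s}$.
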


\begin{rem}
With a little more work the reader may check $\|\NT H\|_{2}< \infty$ and $\|\NT(\nabla H)\|_{2}<\infty$. We do not need this improvement.
\end{rem}

\begin{proof}  
As a derivative of an $L^2(\ree; \IC^m)$-function, $\partial_t \wt{G}$ is in  $\dot W^{-1,2}(\ree;\IC^m)$. Observe that  by Lemma~\ref{lem:L-1} we have 
\begin{align*}
 H=\partial_{t}\wt H= \Le ^{-1}(\partial_{t}\wt G)=\Le ^{-1}(G)\in W^{1,2}(\R^n;\IC^m).
\end{align*}
In particular, $H$ is square-integrable. Let $a,b\in \R$ such that $\supp G \subset [a,b] \times \R^n$. We may assume for simplicity $b\ge 2$.

To establish $\|\NTone H\|_{2}< \infty$, we split  the supremum defining  $\NTone H$ in two parts according to $t<4b$  and $4b\le t$. In the first case we note that $h:=H(0,\cdot)$ is defined in $W^{1,2}(\R^n;\IC^m)$ due to Lemma~\ref{lem:L-1} to give
\begin{align*}
 \bariint_{W(t,x)} |H(s,y)|\ {ds \, dy} \le     \bariint_{W(t,x)} |H(s,y)-h(y)|\ {ds \, dy}+  \bariint_{W(t,x)} |h(y)|\ {ds \, dy}. 
\end{align*}
Since $s\mapsto H(s,\cdot)$ is smooth with values in $L^2(\R^n; \IC^m)$ again by Lemma \ref{lem:L-1}, and as $t<4b$, we can write 
\begin{align}
\label{eq1:NTH}
\begin{split}
 \bariint_{W(t,x)} |H(s,y)|\ {ds \, dy} 
&\le \barint_{B(x,t)} \int_{0}^{8b} |\partial_{s}H(s,y)|\ ds \, dy + \barint_{B(x,t)} |h(y)|\ {ds \, dy} \\
&\le \HL(F)(x)+ \HL(h)(x),
\end{split}
\end{align}
where $\HL$ is the Hardy-Littlewood maximal operator on $\R^n$ and $F(x) := \int_{0}^{8b} |\partial_{s}H(s,x)|\ ds$. We know that $h\in L^2(\R^n; \IC^m)$, and also $F\in L^2(\R^n)$ since  $\partial_{s}H\in L^2(\ree; \IC^m) $ and
\begin{align*}
     \int_{\R^n} |F(x)|^2\ dx \le 8b \int_{\R^n} \int_{0}^{8b} |\partial_{s}H(s,x)|^2\ ds \, dx.
\end{align*}
Taking the supremum over $t<4b$ in \eqref{eq1:NTH}, we obtain the $L^2$ bound from the maximal theorem. Assume now that $t\ge 4b$. Then  for $T\ge 2t$,
\begin{align*}
 \bariint_{W(t,x)} |H(s,y)|\ {ds \, dy}   \le \barint_{B(x,t)}   \int_{2b}^{T} |\partial_{s}H(s,y)|\ ds \, dy + \barint_{B(x,t)} |H(T,y)|\ dy.
\end{align*}
Applying Lemma \ref{lem:L-1}(ii) to $\partial_{t}\wt H= H$, we see that the second term on the right-hand side tends to $0$ as $T\to \infty$. Thus, 
\begin{align}
\label{eq2:NTH}
 \sup_{t\ge 4b}\bariint_{W(t,x)} |H(s,y)|\ {ds \, dy} \le M (F_{1})(x), 
\end{align}
with $F_{1}(x) := \int_{2b}^{\infty} |\partial_{s}H(s,x)|\ ds$. Now, 
\begin{align*}
 \int_{\R^n} |F_{1}(x)|^2\ dx \le \frac{1}{2b}  \int_{\R^n} \int_{2b}^\infty s^2|\partial_{s}H(s,x)|^2\ ds \, dx
\end{align*}
and in the domain of integration, $H$ is a weak solution to $\Le H=0$. Thus, covering this region by Whitney cubes for $\reu$, that is, cubes having sidelength half their distance to the boundary,  we may apply Caccioppoli's inequality on each cube and sum up, using bounded overlap, to get 
\begin{align*}
 \int_{\R^n} \int_{2b}^\infty s^2|\partial_{s}H(s,x)|^2\ ds \, dx \lesssim \int_{\R^n} \int_{b}^\infty |H(s,x)|^2\ ds \, dx <\infty.
\end{align*}
Going back to \eqref{eq2:NTH}, the claim follows again from the maximal theorem.

We next turn to establishing  $\|\NTone (\nabla H)\|_{2}< \infty$. The control for the $t$-derivative $\partial_{t}H$ is the same upon replacing $H$ by $\partial_{t}H$ and $\wt{G}$ by $\partial_t \wt{G}$ in the argument above, which satisfy the same hypotheses. Let us turn to $\nabla_{y}H$. Again we split the supremum into two parts $t<4b$  and $4b\le t$. As for the first one, we argue as in \eqref{eq1:NTH}, using that $s\mapsto \nabla_{y}H(s,\cdot)$ is smooth with values in $L^2(\R^n; \IC^{mn})$ by Lemma~\ref{lem:L-1}, to give
\begin{align}
\label{eq4:NTH}
  \bariint_{W(t,x)} |\nabla_{y}H(s,y)|\ {ds \, dy}  \le \HL(\wt F)(x)+ \HL(\nabla h)(x),
\end{align}
where $\wt F(x):=  \int_{0}^{8b} |\partial_{s}\nabla_{x}H(s,x)|\ ds$. We know that $\nabla h\in L^2(\R^n; \IC^{mn})$, and also $\wt F\in L^2(\R^n)$ since by  Lemma~\ref{lem:L-1},  
\begin{align*}
     \int_{\R^n} |\wt F(x)|^2\ dx \le 8b\int_{\R^n} \int_{0}^{8b} |\partial_{s}\nabla_{x}H(s,x)|^2\ ds \, dx <\infty.
\end{align*}
Taking the supremum over $t<4b$ in \eqref{eq4:NTH}, we obtain again an $L^2$ bound from the maximal theorem. If $t\ge 4b$, we argue as in \eqref{eq2:NTH} to find
\begin{align}
\label{eq3:NTH}
 \sup_{t\ge 4b}\bariint_{W(t,x)} |\nabla_{y}H(s,y)|\ {ds \, dy} \le \HL (\wt F_{1})(x), 
\end{align}
with $\wt F_{1}(x):= \int_{2b}^{\infty} |\partial_{s}\nabla_{x} H(s,x)|\ ds$.
Next, we can use the same covering argument as before to bring into play Caccioppoli's inequality and deduce 
\begin{align*} 
 \int_{\R^n} |\wt F_{1}(x)|^2\ dx \le \frac{1}{2b}  \int_{\R^n} \int_{2b}^\infty s^2|\partial_{s}\nabla_{x}H(s,x)|^2\ ds \, dx \lesssim \int_{\R^n} \int_{b}^\infty |\partial_{s}H(s,x)|^2\ ds \, dx.
\end{align*}
Lemma~\ref{lem:L-1} guarantees that the rightmost term is finite and a final application of the maximal theorem yields the $L^2$ bound in \eqref{eq3:NTH}.
\end{proof}

\subsection{Proof of Theorem \ref{thm:uniqueDir} when \texorpdfstring{$p=2$}{p=2}}

\label{Thm1 p=2}
We assume $\Le u=0$ on $\reu$, the control $\NT u \in L^2(\R^n)$ and we have the convergence 
\begin{align}
\label{eq:NT 0}
 \lim_{t \to 0} \bariint_{W(t,x)} |u(s,y)|\ {ds \, dy} = 0
\end{align}
for almost every $x \in \R^n$. We have to show $u=0$ almost everywhere. To this end, we apply the strategy presented in Section \ref{sec:strategy}.

For a reason  that will appear later in the proof,  we pick $G$ of the form $G=\partial_{t}\wt G$ with $\wt G \in C_{0}^\infty(\ree; \IC^m) \cap \dot W^{-1,2}(\ree; \IC^m)$.  Assume, we have already proved $\dual u G=0$. This means $\dual {\partial_{t}u} {\wt G}=0$. When $n\ge 2$, Sobolev embeddings show that $\wt G$ can be any test function and so this implies $u(t,x)=f(x)$. When $n=1$, we can take any test function with zero average and obtain $u(t,x)=ct+f(x)$ with $c$ constant. The equations hold a.e.\ and we have $f \in L^2_{loc}(\reu)$ since $u \in L^2_{loc}(\reu)$. Due to the limit of Whitney averages at $t=0$, we obtain $f=0$ a.e.\ in both cases by Lebesgue's differentiation theorem. When $n\ge 2$, we are done. When $n=1$, this yields $u(t,x)=ct$, hence $\bariint_{W(t,x)} |u(s,y)|^2\ {ds \, dy} = \frac{5}{4} c^2t^2 $.
As the supremum in $t>0$ is finite a.e., we must have $c=0$. 

To show $\dual u G =0$, we have to make sense of $H_1$ and control both terms on the right-hand side of \eqref{eq:uG}.
We let $\wt H := (\Le ^{*})^{-1}(\wt G)$ and have $H=\partial_{t}\wt H$ due to Lemma~\ref{lem:L-1}(i).  As $\nabla h\in L^2(\R^n; \IC^{mn})$ by Lemma~\ref{lem:L-1}(ii), existence in the regularity problem for $\Le ^*$ yields a solution $H_{1}$ to $\Le ^*H_{1}=0$ in $\reu$ with $\NT (\nabla H_{1}) \in L^2(\R^n)$ and boundary trace $h$. Due to the explicit form of $\theta$, we easily obtain for the first integral in \eqref{eq:uG},
$$
|\dual {Au\nabla \theta}{\nabla (H-H_{1})}| \lesssim  I_{M}+ J_{\varepsilon}+J_{R},
$$
with  
$$I_{M} := \frac{1}{M} \int_{|x| \ge M} \int_{2\varepsilon/3}^{3R/2} |u||\nabla (H-H_{1})|\ ds \, dy$$
and 
$$
J_{\alpha} := \int_{\R^n} \barint_{2\alpha/3}^{3\alpha/2} |u||\nabla (H-H_{1})|\ ds \, dy.
$$

First, $I_{M}$  tends to $0$ as $M\to \infty$. Indeed, let $\Omega:=[\frac{2\varepsilon}{3}, \frac{3R}{2}]\times \{|x|\ge M\}$. By Lemma~\ref{lem:product}, 
\begin{align*}
     I_M \lesssim \frac{1}{M} \|\NT u\|_{2}\|\NT(1_{\Omega}|\nabla (H-H_{1})|)\|_{2}.
\end{align*} 
As  $H-H_{1}$ is a solution to $\Le ^*(H- H_{1})=0$ on a neighbourhood of $\Omega$, we can use reverse H\"older inequalities and a change of Whitney parameters to obtain 
\begin{align*}
 \|\NT(1_{\Omega}|\nabla (H-H_{1})|)\|_{2} \lesssim \|\NTone(\nabla(H-H_{1}))\|_{2},
\end{align*}
which is finite by Lemma~\ref{lem:NTH} and the construction of $H_{1}$. 

Next, set  $w(\varepsilon,x) :=(\frac{2\varepsilon}{3}, \frac{3\varepsilon}{2})\times B(x,\frac{\varepsilon}{2})$, which is a Whitney region compactly contained in $W(\varepsilon,x)$. Using the averaging trick with balls of $\R^n$ having radii $\varepsilon/2$ and Tonelli's theorem, we obtain
\begin{align*}
J_{\varepsilon}    & \leq \int_{\R^n}  \bigg(\bariint_{w(\varepsilon,x)} |u||\nabla (H-H_{1})|\bigg)\ dx   \\
    &  \lesssim \int_{\R^n} \bigg(\bariint_{w(\varepsilon,x)} |u|^2 \bigg)^{1/2}\bigg(\bariint_{w(\varepsilon,x)} |\nabla (H-H_{1})|^2 \bigg)^{1/2}\ dx \\
    & \lesssim \int_{\R^n} \bigg(\bariint_{W(\varepsilon,x)} |u| \bigg)\bigg(\bariint_{ W(\varepsilon,x)} |\nabla (H-H_{1})| \bigg)\ dx,
\end{align*} 
where we have used the reverse H\"older inequality for $u$ and $\nabla (H-H_{1})$.  Observe that the integrand is controlled by $\NT u \cdot \NTone(\nabla(H-H_{1}))$, which as a product of two $L^2$-functions is integrable. Moreover, $\bariint_{W(\varepsilon,x)} |u|$ tends to $0$ as $\varepsilon\to 0$ by assumption. Thus, we conclude $J_{\varepsilon}\to 0$ as $\varepsilon\to 0$ by dominated convergence. 
Finally, we have similarly
\begin{align*}
J_{R}  \lesssim \int_{\R^n} \bigg(\bariint_{ W(R,x)} |u| \bigg)\bigg(\bariint_{ W(R,x)} |\nabla (H-H_{1})| \bigg)\ dx,
\end{align*}
so that we get the same $L^1$-control, while $\bariint_{W(R,x)} |u|\to 0$ as $R\to \infty$ follows from Lemma~\ref{lem:infinity} and $ \NT(u)\in L^2(\R^n)$. 

Finally, we treat $\dual {A\nabla u   }{(H-H_{1})\nabla \theta}$, which is the other term on the right-hand side of \eqref{eq:uG}, exactly as above upon replacing  $|u||\nabla (H-H_{1})|$ by 
$|\nabla u| |H-H_{1}|= (s|\nabla u|) (|H-H_{1}|/s)$. We observe that Caccioppoli's inequality and the reverse H\"older inequality reveal
\begin{align*}
\bigg(\bariint_{w(t,x)} s^2|\nabla u|^2\bigg)^{1/2} \lesssim \bariint_{W(t,x)} |u|.
\end{align*}
Hence we can apply the same argument as before, using $\|\NTone ((H-H_{1})/t)\|_{2} \in L^2(\R^n)$ from Lemma~\ref{lem:trace} and $H=H_{1}$ on the boundary (write $H-H_{1}=H-h+h-H_{1}$), which is used for the first time here. 

\begin{rem}\label{rem:modif}
With regard to Remark~\ref{rem:boundary convergence}, we give the modification of the proof when instead of the non-tangential convergence \eqref{eq:NT 0} we assume $\barint_{t/2}^{2t} |u(s,\cdot)|\, ds \to 0$ as $t\to 0$ in $L^2_{loc}(\R^n)$. The only difference is in the treatment of the limit of $J_{\varepsilon}$. To this end, pick any $\delta>0$ and choose $r>0$ such that
\begin{align*}
 \int_{{}^cB(0,r)} \NT(u) \NTone(\nabla(H-H_{1})) \ dx <\delta.
\end{align*}
Then for $\varepsilon<1$ we obtain from Tonelli's theorem
  \begin{align*}
\label{}
 J_{\varepsilon} & 
\lesssim  
\delta+ \int_{B(0,r)} \bigg(\bariint_{W(\varepsilon,x)} |u(s,y)| \ ds \, dy \bigg)
\NTone(\nabla(H-H_{1}))(x)\ dx  
 \\
    &  \lesssim
\delta+\int_{B(0,r+1)} \bigg(\barint_{\varepsilon/2}^{2\varepsilon} |u(s,y)| \ ds\bigg) \bigg(\barint_{B(y,\varepsilon)} 
\NTone(\nabla(H-H_{1}))(x)\ dx \bigg)  \ dy \\
    &  \leq
\delta+ \bigg\|\barint_{\varepsilon/2}^{2\varepsilon} |u(s,\cdot)| \ ds\bigg\|_{L^2(B(0,r+1))}  \bigg\|\HL(\NTone(\nabla(H-H_{1})))\bigg\|_{L^2(\R^n)},
    \end{align*}
which in the limit superior $\varepsilon\to 0$ is bounded by $\delta$ using the hypothesis and the maximal theorem. The modifications for the integral involving $(s|\nabla u|) (|H-H_{1}|/s)$ are similar, incorporating Caccioppoli's inequality to get back to averages of $u$. 
\end{rem}

\subsection{Proof of Theorem \ref{thm:uniquereg} when \texorpdfstring{$p=2$}{p=2}}
\label{Thm2 p=2}

We assume $\Le  u=0$ on $\reu$ with $\NT (\nabla u) \in L^2(\R^n)$ and convergence $\lim_{t \to 0} \bariint_{W(t,x)} |u(s,y)|\ {ds \, dy} = 0$ for a.e.\  $x\in \R^n$. We have to show $u=0$ almost everywhere. We first remark that by Lemma \ref{lem:trace} we also have $\NTone(u/t)\in L^2(\R^n)$.

As in the proof of Theorem~\ref{thm:uniqueDir}, we pick $G$ of the form $G=\partial_{t}\wt G$ with $\wt G \in C_{0}^\infty(\ree; \IC^m) \cap \dot W^{-1,2}(\ree; \IC^m)$. We claim that it is again enough to show $\dual u G =0$: Indeed, when $n\ge 2$ we may conclude as before and when $n=1$, we reach the point where $u(t,x)=ct$ a.e.\ but as $\NT(u/t)\in L^2(\R^n)$ we must have $c=0$. 

To actually show $\dual u G =0$, we have again to control both terms on the right hand side of \eqref{eq:uG}. We let $\wt H := (\Le ^{*})^{-1}(\wt G)$, noting that by Lemma \ref{lem:L-1} we have $H=(\Le ^{*})^{-1}(G)=\partial_{t}\wt H$.
As $ h\in L^2(\R^n; \IC^m)$ by Lemma~\ref{lem:L-1}(ii), existence for the Dirichlet problem yields a solution $H_{1}$ to $\Le ^*H_{1}=0$ in $\reu$ with $\NT (H_{1}) \in L^2(\R^n)$ and boundary trace $h$ in the sense that $\bariint_{W(\varepsilon,x)} |H_{1}(s,y)-h(x)|\to 0$ for a.e.\ $x \in \R^n$ as $\varepsilon\to 0$. We are now ready to estimate the integrals in \eqref{eq:uG}. We still have
$$
|\dual {Au\nabla \theta}{\nabla (H-H_{1})}| \lesssim  I_{M}+ J_{\varepsilon}+J_{R},
$$
with  
$$I_{M} = \frac{1}{M} \int_{|x|\ge M} \int_{2\varepsilon/3}^{3R/2} \frac{|u|}{s} \cdot |s\nabla (H-H_{1})|\ ds \, dy$$
and 
$$
J_{\alpha}=  \int_{\R^n} \barint_{2\alpha/3}^{3\alpha/2} \frac{|u|}{s} \cdot |s\nabla (H-H_{1})|\ ds \, dy.
$$

First, $I_{M}$ tends to $0$ as $M\to \infty$. Indeed, let $\Omega :=[2\varepsilon/3, 3R/2]\times \{|x|\ge M\}$. By Lemma~\ref{lem:product},
\begin{align*}
 I_M \lesssim \frac{1}{M}\|\NT( u/s)\|_{2}\|\NT(1_{\Omega}|s\nabla (H-H_{1})|)\|_{2}.
\end{align*}
As $\Lop u=0$ and $s\sim t$ on Whitney regions $W(t,x)$, we have $\|\NT( u/s)\|_{2} \lesssim \|\NTone( u/s)\|_{2} < \infty$ by reverse H\"older estimates and change of parameters (which will be implicit in all of the following steps). Also $\Le ^*(H- H_{1})=0$ holds on a neighbourhood of $\Omega$. Thus, we can use Caccioppoli inequalities to obtain $\|\NT(1_{\Omega}s|\nabla (H-H_{1})|)\|_{2} \lesssim \|\NT(1_{\wt \Omega}(H-H_{1}))\|_{2}$, where $\wt\Omega$ is a slightly  bigger region, still at some large distance from  the support of $\wt G$, so that we may use reverse H\"older inequalities to conclude $\|\NT(1_{\wt \Omega}(H-H_{1}))\|_{2} \lesssim \|\NTone H\|_{2}+ \|\NT H_{1}\|_{2}$. The latter are finite by Lemma \ref{lem:NTH} and the construction of $H_{1}$. 

Next, using the averaging trick with balls of radii $\varepsilon/2$ and the Whitney regions $w(\varepsilon,x):=(\frac{2\varepsilon}{3}, \frac{3\varepsilon}{2})\times B(x,\frac{\varepsilon}{2})$ and $\wt w(\varepsilon,x)=(\frac{4\varepsilon}{7}, \frac{7\varepsilon}{4}) \times B(x,\frac{2\varepsilon}{3})$, both compactly contained in $W(\varepsilon,x)$, we obtain
\begin{align*}
J_{\varepsilon}    & \lesssim \int_{\R^n} \bigg(\bariint_{w(\varepsilon,x)} \frac{|u|}{s} \cdot |s\nabla (H-H_{1})|\bigg)\ dx   \\
    &  \lesssim \int_{\R^n} \bigg(\bariint_{w(\varepsilon,x)} \frac{|u|^2}{s^2} \bigg)^{1/2}\bigg(\bariint_{w(\varepsilon,x)} |s\nabla (H-H_{1})|^2 \bigg)^{1/2}\ dx \\
    & \lesssim \int_{\R^n} \bigg(\bariint_{W(\varepsilon,x)} \frac{|u|}{s} \bigg)\bigg(\bariint_{ \wt w(\varepsilon,x)} |H-H_{1}|^2 \bigg)\ dx
    \\
    &  \lesssim \int_{\R^n} \bigg(\bariint_{W(\varepsilon,x)} \frac{|u|}{s} \bigg)\bigg(\bariint_{ W(\varepsilon,x)} |H-H_{1}| \bigg)\ dx,
\end{align*} 
where we have used the reverse H\"older inequality for $u$ and $H-H_{1}$, and Caccioppoli inequalities for $H-H_{1}$, observing that $\Le^*(H-H_{1}) = 0$ holds on a neighbourhood of the domain of integration.  We note that the integrand is controlled by $\NT (u/s) \NTone(H-H_{1})$, which is integrable by assumption on $u$ and Lemma~\ref{lem:trace} plus Lemma~\ref{lem:NTH} for $H$ and the construction of $H_{1}$. As for the pointwise convergence, we use $H-H_{1}$ and write 
\begin{align*}
\bariint_{W(\varepsilon,x)} |H-H_{1}| & \le \bariint_{W(\varepsilon,x)} |H(s,y)-h(x)|\ ds \, dy  + \bariint_{W(\varepsilon,x)} |H_{1}(s,y)-h(x)|\ ds \, dy.
\end{align*}
Letting $\varepsilon\to 0$, the first term goes to $0$ by Lemma~\ref{lem:NTH} combined with Lemma~\ref{lem:trace}. By construction of $H_{1}$, so does the second one. Thus, $J_{\varepsilon}\to 0$ as $\eps \to 0$ by dominated convergence. 
Finally, we have similarly, 
\begin{align*}
J_{R}  \lesssim \int_{\R^n} \bigg(\bariint_{ W(R,x)} \frac{|u|}{s} \bigg)\bigg(\bariint_{ W(R,x)} |H-H_{1}| \bigg)\ dx,
\end{align*}
so that we get the same $L^1$-control and to obtain convergence to $0$ we can use $\bariint_{W(R,x)} |H-H_{1}|\to 0$ as $R\to \infty$, which follows from Lemma~\ref{lem:infinity} since $\NTone(H-H_{1})\in L^2(\R^n)$. 

Finally, we can treat $\dual {A\nabla u   }{(H-H_{1})\nabla \theta}$, which is the second term on the right-hand side of \eqref{eq:uG}, as above upon replacing  $(|u|/s)(s|\nabla (H-H_{1})|)$ by 
$|\nabla u| |H-H_{1}|$. The arguments  for convergences when $\varepsilon\to 0$ and $R\to \infty$ are almost identical and we leave the details to the reader. 
%%%%%%%%%%%%%%%%%%%%%%%%%%%%%%%%%%%%%%%%%%%%%%%%%%%%%%%%%%%%%%%%%%%%%%%%%%%%%%%%%%%%%%%%%%%%%%%%%%%%%%%%%%%%%%%%%%%%%%%%%%%%%%%%%%%%%%%%%%%%%%%%%%%%%%%%%%%%%%%%%%%% 
\section{Representation by single layer operators}
\label{sec:layer}

In order not to disrupt the flow of the proofs of uniqueness,  we only summarise here the needed results for the single layer  operators of \cite{R}, which we denote  by $\mS_{t}^{\Le }$, postponing proofs until Section~\ref{sec:prooflayer}. Throughout, we use the notation $f_{s}\colon x\mapsto f(s,x)$. We begin with the  result summarizing their boundedness properties. 

\begin{lem}\label{lem:bl}   Let $1<p<\infty$. Let $k \geq 0$ be an integer.  Set $\R^*=\R\setminus \{0\}$.
\begin{enumerate}
  \item If   $p\in \cH_{\Le }$, we have the mapping properties (of extensions by density of)
\begin{align*}
(t\partial_{t})^k\mS_{t}^{\Le }:L^{p}(\R^n; \IC^m) \to \dot W^{1,p}(\R^n; \IC^m), \qquad t\in \R^* 
\end{align*}
and 
\begin{align*}
(t\partial_{t})^k\partial_{t}\mS_{t}^{\Le }: L^{p}(\R^n; \IC^m)\to L^{p}(\R^n; \IC^m), \qquad t\in \R^*.   
\end{align*}
Moreover, the operator norms are uniform with respect to $t$, and as functions of $t$,  these  operators are strongly continuous  on $\R^*$, have strong limits at  $0^\pm$  and vanish strongly at infinity. Moreover, the limits at $0^\pm$ are the same except for $\pd_{t}\mS_{t}^{\Le}$ on $L^p$.

  \item  If $p'\in \cH_{\Le ^*}$, 
we have the mapping properties (of extensions by density of)
\begin{align*}
(t\partial_{t})^k\mS_{t}^{\Le }:\dot W^{-1,p}(\R^n; \IC^m) \to  L^{p}(\R^n; \IC^m), \qquad t\in \R^*. 
\end{align*}
Moreover, the operator norms are uniform with respect to $t$, and as functions of $t$,  these  operators are  strongly continuous  on $\R^*$, have strongly continuous extensions  at  $0$  and vanish strongly at infinity.    
\end{enumerate}  
\end{lem}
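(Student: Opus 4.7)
The plan is to realise each of the operator families in the statement as a bounded function of the perturbed Dirac-type operator $DB$ associated with $\Le$ via the scheme of \cite{AAMc, R}, and then invoke the $L^p$ bounds for the $H^\infty$ functional calculus of $DB$ in the range encoded by the hypothesis $p\in \cH_\Le$ (resp.\ $p'\in \cH_{\Le^*}$). Recall that the single layer operator is built in \cite{R} so that the Cauchy extension $e^{-t[DB]}\chi^\pm(DB)$ of conormal/tangential boundary data carries the conormal gradient $\gradA u$ of the corresponding $\Le$-harmonic function $u$; reading off the tangential components recovers $\nabla_x \mS_t^\Le$ and the conormal component recovers $\partial_t \mS_t^\Le$, while $\mS_t^\Le$ itself is determined up to an additive constant. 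The multipliers $(tz)^k$ and $e^{-t[z]}$ are bounded on bisectors uniformly in $t \in \R^*$, so the derivatives $(t\partial_t)^k$ only improve functional-calculus bounds.

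For part (i), the identification of the conormal gradient with an element of the abstract Hardy space $H^p_{DB}$ together with the hypothesis $p\in \cH_\Le$ that $H^p_{DB}=H^p_D$ gives the $L^p\to L^p$ bound for tangential components (yielding the $\dot W^{1,p}$ target after restoration of $\nabla_x$) and for the conormal component (yielding $L^p\to L^p$ for $\partial_t \mS_t^\Le$). Strong continuity on $\R^*$, existence of strong limits at $0^\pm$ and vanishing at infinity will be extracted as standard properties of the $C_0$-semigroup $e^{-t[DB]}$, lifted to $L^p$ by density and the uniform $L^p$ bounds; the identity $\chi^+(DB)+\chi^-(DB)=I$ on the relevant spectral subspace forces $\partial_t \mS_t^\Le$ to jump across $t=0$, while the other objects match on both sides. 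Part (ii) will then be obtained by duality: using the identification $\dot W^{-1,p}(\R^n;\IC^m) = \{\div F : F\in L^p(\R^n;\IC^{mn})\}$ and Green's formula, the adjoint of $\mS_t^\Le: \dot W^{-1,p}\to L^p$ is identified, up to sign, with $\nabla \mS_{-t}^{\Le^*}: L^{p'}\to L^{p'}$, whose boundedness is provided by part (i) applied to $\Le^*$ in the range $\cH_{\Le^*}\ni p'$. The fact that $\mS_t^\Le$ itself does not jump at $t=0$ (only its conormal derivative does) is what permits the strongly continuous extension at $0$ in (ii).

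The main obstacle will be the passage from $p=2$ to $p\neq 2$ in the strong continuity and the limit statements. At $p=2$ everything follows directly from the $H^\infty$ functional calculus of $DB$ on $L^2$ proved in \cite{AKMc} together with standard $C_0$-semigroup theory on the range of $\chi^\pm(DB)$. For $p\neq 2$, the operators are defined by density from a subspace of $H^p_{DB}\cap H^2_{DB}$, and one needs to transfer both the uniform $L^p$ bounds and the $L^2$ strong continuity to the $L^p$ setting. This will be done through the tent-space machinery of \cite{AusSta}, by approximating a general datum by an $L^p\cap L^2$ datum and combining $L^p$ uniformity with $L^2$ strong convergence on a dense subspace. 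The jump of $\partial_t \mS_t^\Le$ at $t=0$ then requires identifying the semigroup limit with the Calderón projector $\chi^\pm(DB)$ on $L^p$, which holds verbatim once the Hardy-space coincidence is available.
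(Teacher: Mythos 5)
Your approach is in substance the same as the paper's: the result lives in the $DB$ functional-calculus / abstract Hardy space framework of \cite{AusSta}, and the paper's own proof consists essentially of a citation to Theorem 12.6(1),(3),(5) of \cite{AusSta} (with the $p=2$ case credited to \cite{R}), supplemented by two observations: strong continuity away from $0$ follows from the semigroup boundedness in Corollary~8.3 of \cite{AusSta}, and the strong vanishing at $\pm\infty$ follows from the fact that a bounded holomorphic $C_0$-semigroup converges strongly to $0$ at $\infty$ on the closure of the range of its generator. Your sketch unpacks exactly this machinery --- Cauchy extensions $e^{-t[DB]}\chi^\pm(DB)$, Hardy space coincidence $H^p_{DB}=H^p_D$, duality with $\Le^*$ for part (ii), the jump of $\partial_t\mS_t^\Le$ from $\chi^+(DB)+\chi^-(DB)=I$ --- so you have correctly reconstructed what the cited theorem does.

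One genuine soft spot worth flagging: the step ``combining $L^p$ uniformity with $L^2$ strong convergence on a dense subspace'' does not, as literally stated, give strong continuity on $L^p$. If $f_n\to f$ in $L^p$ with $f_n\in L^p\cap L^2$, the three-term splitting $\|T_tf-T_{t_0}f\|_p\le\|T_t(f-f_n)\|_p+\|T_tf_n-T_{t_0}f_n\|_p+\|T_{t_0}(f_n-f)\|_p$ controls the outer terms uniformly in $t$, but the middle term is only known to tend to $0$ in $L^2$, which does not imply convergence in $L^p$. The fix that \cite{AusSta} effectively uses, and which you should make explicit, is that $\cH_\Le$ is an \emph{open} interval containing $2$, so for each $p\in\cH_\Le$ one may pick $q\in\cH_\Le$ on the far side of $p$ from $2$ and interpolate $\|T_tf_n-T_{t_0}f_n\|_p\lesssim\|T_tf_n-T_{t_0}f_n\|_q^{1-\theta}\|T_tf_n-T_{t_0}f_n\|_2^{\theta}$ with uniform $L^q$ bounds and $L^2$ convergence; alternatively one works entirely within $H^p_{DB}$ and invokes the $C_0$-semigroup property there before using the Hardy space identification. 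Either route repairs the argument without changing the overall strategy.
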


\begin{rem} Recall that the intervals $\cH_{\Le }, \cH_{\Le ^*}$ were defined in Section~\ref{sec:main results}.
They are open intervals and contain 2, so $\cH_{\Le }
\cap (\cH_{\Le ^*})'$, where $I'=\{p'; p\in I\}$, is an open interval around 2 and for $p$ in this interval both sets of estimates hold.   
\end{rem}

\begin{rem} In the first case, it follows from the stated properties that $(t\partial_{t})^k\nabla_{x}\mS_{t}^{\Le }$ are $L^p$ bounded uniformly in $t\ne 0$. The same is true for the conormal derivatives $(t\partial_{t})^k\dnuA\mS_{t}^{\Le }$ and these operators are not continuous at 0 when $k=0$ (jump relations). In the second case, these operators are $\dot W^{-1,p}$ bounded uniformly in $t$. 
\end{rem}

\begin{rem}
More mapping properties on fractional Sobolev and Besov spaces can be drawn by interpolating these two sets of inequalities. We do not need those here. Note that for $1<p<\infty$, the condition $p'\in \cH_{\Le ^*}$ is closely related to the identification of certain negative order Sobolev spaces $\dot W^{-1,p}_{DB}=\dot W^{-1,p}_{D}$ as $\dot W^{-1,p}_{D}$  is a natural space for $(\dnuA u|_{t=0}, \nabla_{x}f)$ if $u$ is a solution to $\Le u = 0$ with Dirichlet data $f\in L^p$. The reader can refer to \cite{AA} for more on this issue. 
\end{rem}

Next, we state the representation formula by the above single layer operators for $\Le ^{-1}$. The proof  will only use the properties  stated above for $p=2$. 

\begin{prop}[Representation by single layer operators]
\label{prop:representation}
Assume $f\in C_{0}^\infty(\ree;\IC^m)$. If $n=1,2$ assume furthermore that $f=\div_{x}F$ for some $F\in C_{0}^\infty(\ree; \IC^{mn})$. Then 
\begin{align}
\label{eq:cvgamma1}
(\Le ^{-1}f)(t,x)
= \mathrm{p.v.}\!\int_{\R} \mS_{t-s}^\Le  f_s(x) \ ds, \quad \mathrm{in }\  \dot W^{1,2}(\ree; \IC^m),
\end{align}
where 
\begin{equation*}
\mathrm{p.v.}\!\int_{\R} \mS_{t-s}^\Le  f_s(x) \ ds := \lim_{\varepsilon\to 0, R\to \infty}\int_{\varepsilon<|t-s|<R} \mS_{t-s}^\Le  f_s(x) \ ds, 
\end{equation*} 
and
\begin{align}
\label{eq:cvgamma2}
\begin{split}
\Le ^{-1}(\partial_{t}f)(t,x) 
=  (\partial_{t} \Le ^{-1}f)(t,x) 
&= \mathrm{p.v.}\!\int_{\R} \partial_{t}\mS_{t-s}^\Le  f_{s}(x)\ ds \\
&=\mathrm{p.v.}\!\int_{\R} \mS_{t-s}^\Le  (\partial_{s}f_{s})(x) \ ds,   \quad \mathrm{in }\   L^2(\ree; \IC^m).
\end{split}
\end{align}
Furthermore, with $W^{1,2}(\R^n; \IC^m)$-convergence uniformly for $t\in \R$, we have norm convergent (Bochner) integrals in $W^{1,2}(\R^n; \IC^m)$,
\begin{equation}
\label{eq:cvgamma3}
(\Le ^{-1}(\partial_{t}f))_{t}=  (\partial_{t} \Le ^{-1}f)_{t}=  \int_{\R} \partial_{t}\mS_{t-s}^\Le  f_{s}\ ds=  \int_{\R} \mS_{t-s}^\Le  (\partial_{s}f_{s})\ ds.
\end{equation}
\end{prop}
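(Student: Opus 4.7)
The plan is to set $v(t,x) := \mathrm{p.v.}\!\int_\R \mS_{t-s}^\Le f_s(x)\,ds$, to show that $v\in\dot W^{1,2}(\ree;\IC^m)$ with $\Le v=f$ in $\dot W^{-1,2}(\ree;\IC^m)$, and then to conclude $v=\Le^{-1}f$ from the Lax--Milgram isomorphism $\Le\colon\dot W^{1,2}\to\dot W^{-1,2}$. The divergence hypothesis in dimensions $n=1,2$ is exactly what is needed to place $f$ in $\dot W^{-1,2}(\ree;\IC^m)$; for $n\ge 3$ this is automatic by Sobolev embedding.

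For existence of the principal value and $\dot W^{1,2}$-membership, I would rely on the uniform $L^2\to\dot W^{1,2}$ bounds on $\mS_r^\Le$ supplied by Lemma~\ref{lem:bl}(i) with $p=2$, together with the strong continuity of $r\mapsto\mS_r^\Le$ off $r=0$, the existence of strong limits at $r=0^\pm$, and the decay at infinity. At each fixed $t$, these properties combined with the compact support of $f$ in $s$ control $v(t,\cdot)\in\dot W^{1,2}(\R^n;\IC^m)$ via dominated convergence. To upgrade this to $\dot W^{1,2}(\ree;\IC^m)$-convergence of the truncated integrals, I would pass to the Fourier transform in $t$: by transversal independence of $A$, the convolution becomes multiplication by an operator-valued symbol $\check{\mS}(\tau)$, and Plancherel together with the $\tau$-uniform bounds on $\check{\mS}(\tau)$ and $\tau\check{\mS}(\tau)$ provided by Lemma~\ref{lem:bl}(i) with $k=0,1$ yields the conclusion.

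The key step is the identification $\Le v=f$. I would test against $\varphi\in C_0^\infty(\ree;\IC^m)$ and approximate $v$ by $v_\varepsilon(t,x):=\int_{|t-s|>\varepsilon}\mS_{t-s}^\Le f_s(x)\,ds$. For each fixed $s$, the function $(t,x)\mapsto\mS_{t-s}^\Le f_s(x)$ is $\Le$-harmonic away from the hyperplane $\{t=s\}$---this is the defining property of the single layer potential of~\cite{R}---so integration by parts in the bracket $\langle A\nabla v_\varepsilon,\nabla\varphi\rangle$ leaves only surface contributions at $\{t=s\pm\varepsilon\}$. The continuity of $\mS_r^\Le$ at $r=0^\pm$ (with equal limits, per Lemma~\ref{lem:bl}(i)) makes the tangential contribution vanish as $\varepsilon\to 0$, whereas the conormal jump relation for $\dnuA\mS_r^\Le$ at $r=0$ alluded to after Lemma~\ref{lem:bl} exactly reconstructs $\langle f,\varphi\rangle$. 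Making this jump relation quantitative through the construction of $\mS_t^\Le$ via the $H^\infty$ calculus of $DB$ of~\cite{AKMc} is where I expect the main technical effort; equivalently, on the Fourier side the identity amounts to the fiberwise relation $\Le_\tau\,\check{\mS}(\tau)=I$, where $\Le_\tau$ is obtained from $\Le$ by replacing $\partial_t$ with $\i\tau$.

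For \eqref{eq:cvgamma2}, transversal independence of $A$ gives $\partial_t\Le^{-1}f=\Le^{-1}\partial_t f$ directly, and differentiation under the principal value is legitimized by the uniform $L^2\to L^2$ bounds on $\partial_t\mS_r^\Le$ and $t\partial_t\mS_r^\Le$ from Lemma~\ref{lem:bl}(i) with $k=0,1$. For the Bochner integral representation \eqref{eq:cvgamma3} in $W^{1,2}(\R^n;\IC^m)$, I would observe that $s\mapsto\partial_t\mS_{t-s}^\Le f_s$ is strongly continuous off $s=t$, uniformly bounded in $W^{1,2}(\R^n;\IC^m)$ by Lemma~\ref{lem:bl}, and, thanks to decay of $\mS_r^\Le$ at infinity and compact support of $f$ in $s$, has integrable norm uniformly in $t$; standard Bochner theory then upgrades the principal value to an absolutely convergent integral uniformly in $t\in\R$.
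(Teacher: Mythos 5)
Your strategy is genuinely different from the paper's, and while it is sound in outline, it defers exactly the step the paper is designed to circumvent. You propose to show that $v:=\mathrm{p.v.}\int_\R \mS_{t-s}^\Le f_s\,ds$ lies in $\dot W^{1,2}(\ree;\IC^m)$ and solves $\Le v=f$, then invoke the Lax--Milgram isomorphism; the crucial ingredient is the conormal jump relation for $\dnuA\mS_r^\Le$ across $r=0$, which you yourself flag as ``the main technical effort'' and do not carry out. The paper goes in the opposite direction: it starts from $u=\Le^{-1}f$, passes to the conormal gradient $F=\nabla_A u$, which satisfies $\partial_t F+DBF=\begin{bmatrix}f\\0\end{bmatrix}$ in the strong sense by Lemma~\ref{lem:L-1}, substitutes this equation into the explicit semigroup formula \eqref{eq:gradst} for $\gradA\mS_{t-s}^\Le f_s$, and integrates by parts. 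The truncation boundary terms are then semigroup factors $\e^{\mp\eps DB}\chi^\pm(DB)F_{t\mp\eps}$, whose limits are controlled by Lemma~\ref{lem:ConvergenceLemma} and recombine via the calculus identity $\chi^+(DB)+\chi^-(DB)=\mathrm{Id}$ on $\cH$ to give $F_t=\nabla_A u_t$ outright, in both $\cH$ uniformly in $t$ and in $L^2(\ree)$. This yields $\nabla v=\nabla u$ directly and never requires stating a jump relation as a separate fact --- in effect it is the same identity $\chi^++\chi^-=\mathrm{Id}$, but applied to $F_t$ rather than to $\begin{bmatrix}f\\0\end{bmatrix}$.

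Two further points. First, you cannot actually cite the jump relation as a known ingredient here: the remark after Lemma~\ref{lem:bl} mentions it but does not prove it, and deriving it from \eqref{eq:gradst} costs the same work as the paper's direct computation, so your route would end up reproducing the paper's semigroup argument inside a Green-identity wrapper. Second, your idea to pass to the Fourier transform in $t$ and work fiberwise with $\check{\mS}(\tau)$ and $\Le_\tau$ is a legitimate alternative for the global $\dot W^{1,2}(\ree)$-convergence but again not the one taken: the paper obtains $L^2(\R;\cH)$-convergence as a byproduct of Lemma~\ref{lem:ConvergenceLemma}, which simultaneously delivers the uniform-in-$t$ convergence needed for \eqref{eq:cvgamma3}, so it is more economical than a Plancherel reduction. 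Your treatment of \eqref{eq:cvgamma2} and \eqref{eq:cvgamma3} via uniform bounds and strong continuity of the operator family is compatible with the paper's use of identity \eqref{eq:dtst} and Lemma~\ref{lem:ConvergenceLemma}.
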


\begin{rem}   It is classical that $f_t(x)=\div_{x}F_t(x)$ is equivalent to $\int_{\R^n} f_t(x) \ dx =0$. 
 We note that the replacement of $f$ by $\partial_{t}f$, which has all the other properties required for $f$, yields a better convergence  of the principal value towards $\Le ^{-1}(\partial_{t}f)$. Also for fixed $t$, the integrals in  \eqref{eq:cvgamma3} have good behaviour: It is only to identify the limit in $L^{2}(\ree; \IC^m)$ that we have to take principal values.
\end{rem}

We also provide a result implying that the operators $\mS_t^\Le$ are the unique bounded operators $L^2(\R^n; \IC^m) \to \dot W^{1,2}(\R^n; \IC^m)$ depending strongly continuously on $t \in \R$, for which the representation \eqref{eq:cvgamma1} holds true.

\begin{prop}\label{prop:uniqbl} Let $f\in C_{0}^\infty(\R^{n}; \IC^m)$ with $\int_{\R^n} f=0$ if $n=1,2$. Let $\chi_{\varepsilon}(s)= \frac{1}{\varepsilon} \chi(\frac{s}{\varepsilon})$ with $\varepsilon>0$ and $\chi\in C_{0}^\infty(\R)$ satisfying $\int_{\R}\chi(s)\, ds=1$. Set $f_{\varepsilon}(s,x) := \chi_{\varepsilon}(s)g(x)$. Then $\Le ^{-1}f_{\varepsilon}$ belongs to $C_0(\R; \dot W^{1,2}(\R^n; \IC^m))$  and for all $t\in \R$, $(\Le ^{-1}f_{\varepsilon})_{t}$ converges in $\dot W^{1,2}(\R^n; \IC^m)$ to $\mS_{t}^\Le  f$. 
\end{prop}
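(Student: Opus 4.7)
The strategy is to apply Proposition \ref{prop:representation} to the test function $f_\varepsilon$, to identify the resulting principal value as an honest Bochner integral against the approximate identity $\chi_\varepsilon$, and then to pass to the limit $\varepsilon\to 0$ using strong continuity of $\tau\mapsto \mS_\tau^\Le f$ from Lemma \ref{lem:bl}. To begin, I would check the hypotheses of Proposition \ref{prop:representation}. Clearly $f_\varepsilon\in C_0^\infty(\ree;\IC^m)$. In dimensions $n\in\{1,2\}$, the assumption $\int_{\R^n} f=0$ together with the classical equivalence recalled in the remark after Proposition \ref{prop:representation} yields $F\in C_0^\infty(\R^n;\IC^{mn})$ with $f=\div_x F$; then $F_\varepsilon(s,x):=\chi_\varepsilon(s)F(x)$ lies in $C_0^\infty(\ree;\IC^{mn})$ and satisfies $f_\varepsilon=\div_x F_\varepsilon$, as required. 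Applying \eqref{eq:cvgamma1} therefore gives
\begin{equation*}
    (\Le^{-1} f_\varepsilon)(t,\cdot)=\mathrm{p.v.}\!\int_\R \chi_\varepsilon(s)\,\mS_{t-s}^\Le f\ ds\qquad\text{in}\ \dot W^{1,2}(\ree;\IC^m).
\end{equation*}

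Next I would invoke Lemma \ref{lem:bl}(i) with $p=2$ and $k=0$: the operators $\mS_\tau^\Le$ are uniformly bounded from $L^2$ into $\dot W^{1,2}$ for $\tau\in\R^*$, strongly continuous in $\tau$, have coinciding strong limits at $\tau=0^\pm$, and vanish strongly as $\tau\to\pm\infty$. Consequently $\tau\mapsto \mS_\tau^\Le f$ extends to a bounded strongly continuous function from $\R$ into $\dot W^{1,2}(\R^n;\IC^m)$ vanishing at infinity. For each fixed $t\in\R$, the integrand $s\mapsto\chi_\varepsilon(s)\mS_{t-s}^\Le f$ is therefore continuous and compactly supported into $\dot W^{1,2}(\R^n;\IC^m)$, so the integral above is a genuine Bochner integral, coincides with the principal value, and furnishes the continuous representative $(\Le^{-1} f_\varepsilon)_t:=\int_\R\chi_\varepsilon(s)\mS_{t-s}^\Le f\ ds$ of $\Le^{-1} f_\varepsilon$. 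Continuity in $t$ follows from dominated convergence in $\dot W^{1,2}$; decay as $|t|\to\infty$ follows because $|t-s|\to\infty$ uniformly for $s\in\supp\chi_\varepsilon$, forcing $\|\mS_{t-s}^\Le f\|_{\dot W^{1,2}}\to 0$ uniformly on that support. This proves $\Le^{-1} f_\varepsilon\in C_0(\R;\dot W^{1,2}(\R^n;\IC^m))$.

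Finally, to pass to the limit $\varepsilon\to 0$ at a fixed $t\in\R$, I would change variables $s=\varepsilon\sigma$ and rewrite
\begin{equation*}
    (\Le^{-1} f_\varepsilon)_t=\int_\R \chi(\sigma)\,\mS_{t-\varepsilon\sigma}^\Le f\ d\sigma.
\end{equation*}
By continuity of $\tau\mapsto \mS_\tau^\Le f$ at $\tau=t$ (the coincidence of one-sided limits at $0$ handling the case $t=0$) together with the uniform bound supplied by Lemma \ref{lem:bl}(i), dominated convergence in $\dot W^{1,2}(\R^n;\IC^m)$ and $\int\chi=1$ yield $(\Le^{-1} f_\varepsilon)_t\to \mS_t^\Le f$ in $\dot W^{1,2}(\R^n;\IC^m)$ for every $t\in\R$. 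The only genuinely delicate point in the argument is the passage from the equality in $\dot W^{1,2}(\ree;\IC^m)$ provided by Proposition \ref{prop:representation} to a pointwise-in-$t$ Bochner identity; this step hinges precisely on the coincidence of the one-sided limits of $\mS_\tau^\Le$ at $\tau=0$ from Lemma \ref{lem:bl}(i), without which the integrand would fail to be continuous through $s=t$ and the Bochner interpretation would break down.
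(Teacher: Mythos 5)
Your proof is correct and follows essentially the same route as the paper: both rest on the representation formula \eqref{eq:cvgamma1} from Proposition~\ref{prop:representation} applied to $f_\varepsilon$, plus the uniform boundedness, strong continuity on $\R^*$, coinciding limits at $0^\pm$, and decay at $\pm\infty$ of $\mS_\tau^\Le\colon L^2\to\dot W^{1,2}$ from Lemma~\ref{lem:bl}(i) with $p=2$. The only superficial difference is in establishing the $C_0(\R;\dot W^{1,2}(\R^n;\IC^m))$ regularity: the paper cites Lemma~\ref{lem:L-1}, whereas you derive it directly from the Bochner-integral representation, which is equally valid and arguably a little more self-contained. One small overstatement worth noting: you write that without coincidence of the one-sided limits at $0$ ``the Bochner interpretation would break down,'' but in fact the integrand would remain bounded and measurable with a single removable discontinuity, so the Bochner integral would still exist and the p.v.\ identification would still go through; what the coincidence of limits is actually needed for is the final convergence $(\Le^{-1}f_\varepsilon)_t\to\mS_t^\Le f$ at $t=0$, where otherwise the limit would be a $\chi$-weighted convex combination of the two one-sided limits.
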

%%%%%%%%%%%%%%%%%%%%%%%%%%%%%%%%%%%%%%%%%%%%%%%%%%%%%%%%%%%%%%%%%%%%%%%%%%%%%%%%%%%%%%%%%%%%%%%%%%%%%%%%%%%%%%%%%%%%%%%%%%%%%%%%%%%%%%%%%%%%%%%%%%%%%%%%%%%%%%%%%%%% 
\section{The case \texorpdfstring{$p\ne 2$}{p =/= 2}}\label{sec:pnot2}

Let us mention that the case when $p- 2$ is small could be treated similarly to the case $p=2$ without the representation by layer potentials and assuming only G\aa{}rding's inequality \eqref{eq:Ga}. This would use a basic extension of Lemma~\ref{lem:L-1} and Lemma~\ref{lem:NTH}, taking into account that $\Le: \dot W^{1,p}(\ree) \to \dot W^{-1,p}(\ree)$ remains invertible for such $p$ due to \u{S}ne{\u{\ii}}berg's lemma~\cite{Sn}. But this does not apply when $p$ gets ``far'' from $2$.

Henceforth, we assume \eqref{eq:accrassumption} and begin with a lemma analogous to Lemma~\ref{lem:NTH} in our range of $p$.

\begin{lem}\label{lem:NTHp} Let $\wt G \in C_{0}^\infty(\ree;\IC^m)$ be such  that $\wt G=\div_{x}G^\sharp$ for some $G^\sharp \in C_{0}^\infty(\ree; \C^{nm})$. Set $\wt H:=(\Le^*) ^{-1}(\wt G)$,  $G:=\partial_{t}\wt G$ and $H:=\partial_{t}\wt H$.  Let $1<p<\infty$. 

\begin{enumerate}
 \item  If $p\in \cH_{\Le }$, then $\partial_t^k H \in C_{0}(\R; L^{p'}(\R^n; \IC^m))$ for all integers $k \geq 0$ and  $\|\NTone H\|_{p'}<\infty$.
 
 \item If $p'\in \cH_{\Le ^*}$, then $\partial_t^k H \in C_{0}(\R; W^{1,p'}(\R^n; \IC^m))$ for all integers $k \geq 0$ and $\|\NTone H\|_{p'} < \infty$ as well as $\|\NTone(\nabla H)\|_{p'}<\infty$.
\end{enumerate}
In both statements, the distinguished variable in the regularity estimates is $t \in \R$.
\end{lem}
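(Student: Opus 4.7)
The plan is to derive all conclusions from the single-layer representation of $\wt H=(\Le^*)^{-1}\wt G$ provided by Proposition~\ref{prop:representation} (applied to $\Le^*$, whose hypotheses are symmetric to those on $\Le$) combined with the mapping properties of Lemma~\ref{lem:bl}. Since $\wt G=\divx G^\sharp$ with $G^\sharp\in C_0^\infty$, the side condition $\int_{\R^n}\wt G_s=0$ required when $n\leq 2$ is automatic, so
$$
\wt H_t=\mathrm{p.v.}\!\int_\R \mS^{\Le^*}_{t-s}\wt G_s\,ds,\qquad
\partial_t^k H_t=\int_\R \mS^{\Le^*}_{t-s}(\partial_s^k G_s)\,ds
$$
as Bochner integrals in $W^{1,2}(\R^n;\IC^m)$, with each $\partial_s^k G_s=\divx\partial_s^{k+1} G^\sharp_s$ smooth, compactly supported in $s\in[a,b]$, and belonging uniformly in $s$ to $\dot W^{-1,p'}\cap L^{p'}$.

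The first task is to obtain uniform bounds in $t$. In case~(i), Lemma~\ref{lem:bl}(ii) applied to $\Le^*$ (the condition reads $p\in\cH_\Le$) yields $\mS^{\Le^*}_\tau\colon \dot W^{-1,p'}\to L^{p'}$ uniformly in $\tau$, so the Bochner integral converges in $L^{p'}$ and $\|\partial_t^k H_t\|_{p'}\lesssim 1$. In case~(ii), Lemma~\ref{lem:bl}(i) applied to $\Le^*$ (condition $p'\in\cH_{\Le^*}$) provides uniform bounds $\mS^{\Le^*}_\tau\colon L^{p'}\to\dot W^{1,p'}$ and $\partial_\tau\mS^{\Le^*}_\tau\colon L^{p'}\to L^{p'}$; one then writes $H_t=\int\partial_t\mS^{\Le^*}_{t-s}\wt G_s\,ds$ and $\nabla H_t=\int\nabla\mS^{\Le^*}_{t-s}G_s\,ds$ (both $\wt G_s$ and $G_s$ lying in $L^{p'}$) to deduce $\|H_t\|_{p'}+\|\nabla H_t\|_{p'}\lesssim 1$, and analogously for higher $\partial_t^k$. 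The $C_0(\R;\cdot)$ regularity then follows from strong continuity and strong vanishing at $\pm\infty$ of $\tau\mapsto\mS^{\Le^*}_\tau$ (Lemma~\ref{lem:bl}) combined with dominated convergence in the Bochner integrals, the integrands being uniformly norm-bounded on the compact support $s\in[a,b]$.

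To bound $\|\NTone H\|_{p'}$ I would mimic the two-scale argument of Lemma~\ref{lem:NTH}. For $t<4b$, the $L^{p'}$-valued fundamental theorem of calculus $H(s,\cdot)=h+\int_0^s\partial_\sigma H(\sigma,\cdot)\,d\sigma$ with $h:=H(0,\cdot)$ yields the pointwise bound
$$
\bariint_{W(t,x)}|H|\leq \HL(F)(x)+\HL(h)(x),\quad F(y)=\int_0^{8b}|\partial_\sigma H(\sigma,y)|\,d\sigma,
$$
with $\|h\|_{p'},\|F\|_{p'}<\infty$ by the previous paragraph (and Minkowski for $F$), so the $L^{p'}$-maximal theorem settles this range. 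For $t\geq 4b$ the vanishing $\|H(T,\cdot)\|_{p'}\to 0$ as $T\to\infty$ (via Hölder on $B(x,t)$) kills the boundary contribution and reduces us to $\bariint_{W(t,x)}|H|\leq \HL(F_1)(x)$ with $F_1(y)=\int_{2b}^\infty|\partial_\sigma H(\sigma,y)|\,d\sigma$.

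The main obstacle is to show $F_1\in L^{p'}$, since the $L^2$ argument of Lemma~\ref{lem:NTH} crucially used the global integrability $H\in L^2(\ree)$, which is unavailable when $p'\neq 2$ (we only have $H\in L^\infty_tL^{p'}_x$). I would bypass $F_1$ altogether and estimate $\NTone H$ directly from the representation: by Minkowski's inequality,
$$
\NTone H(x)\lesssim \int_a^b \NTone\!\big((r,y)\mapsto\mS^{\Le^*}_{r-s}G_s(y)\big)(x)\,ds,
$$
the Whitney parameters absorbing the harmless $s$-dependent time shift since $s\in[a,b]$ is bounded. Each non-tangential maximal on the right of a single-layer solution is controlled in $L^{p'}$ by $\|G_s\|_{\dot W^{-1,p'}}$ in case~(i), resp.\ by $\|G_s\|_{L^{p'}}$ in case~(ii), using the non-tangential maximal estimates for single-layer operators from~\cite{AusSta} that accompany the $L^p$-bounds of Lemma~\ref{lem:bl} under the Hardy-space coincidences defining $\cH_\Le$ and $\cH_{\Le^*}$. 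Integrating over the bounded $s$-interval then yields $\|\NTone H\|_{p'}<\infty$. The bound $\|\NTone(\nabla H)\|_{p'}<\infty$ in case~(ii) is obtained by the same route with $\nabla\mS^{\Le^*}$ in place of $\mS^{\Le^*}$.
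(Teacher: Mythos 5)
Your proof reproduces the paper's treatment of the uniform operator-norm bounds, the $C_0$ regularity via dominated convergence, and the small-$t$ part of the maximal estimate (the function $F$), and you correctly identify why the $L^2$ argument for $F_1$ in Lemma~\ref{lem:NTH} breaks down for $p'\neq 2$ (global square integrability of $H$ is lost). The genuine divergence is in how you handle the large-$t$ regime. The paper integrates by parts \emph{twice} in the representation $\partial_t H_t=\int_a^b \mS_{t-s}^{\Le^*}\partial_s G_s\,ds=\int_a^b\partial_s^2\mS_{t-s}^{\Le^*}\wt G_s\,ds$ and then exploits the $(t\partial_t)^k$-uniform bounds of Lemma~\ref{lem:bl} to obtain the operator-norm decay $\|\partial_s^2\mS^{\Le^*}_{t-s}\|\lesssim|t-s|^{-2}$, hence $\|\partial_t H_t\|_{p'}\lesssim t^{-2}$, which is what makes $F_1$ integrable. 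You instead invoke full non-tangential maximal estimates for single layer potentials from \cite{AusSta} via Minkowski's inequality. This is a legitimate alternative, but it uses strictly more input than the paper: the paper deliberately distils from \cite{AusSta} only the uniform $L^{p}$ and $\dot W^{1,p}$ bounds on $(t\partial_t)^k\mS_t$ stated in Lemma~\ref{lem:bl} and derives all needed maximal estimates on $H$ itself from those, while you import ready-made $\NT$-bounds for $\mS^{\Le^*}_\cdot$ on $\dot W^{-1,p'}$ and $L^{p'}$. Those are indeed in \cite{AusSta}, but if you want your argument to be self-contained in this paper you would need to add them to Lemma~\ref{lem:bl}.

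Two further points deserve sharpening. First, your claim that ``the Whitney parameters absorb the harmless $s$-dependent time shift'' is only correct for $t\gtrsim\max(|a|,b)$: for $t>4\max(|a|,b)$ the shifted region $(t/2-s,2t-s)\times B(x,t)$ lies in $(t/4,3t)\times B(x,t)$, which is Whitney-comparable with adjusted parameters, and that is precisely the regime you need (replacing $F_1$); for small $t$ the shifted interval crosses or approaches $r-s=0$ and the comparison fails, but there you rightly use the $F$-argument. You should state this split explicitly — as written, your Minkowski inequality is applied to all of $\NTone H$, which is not quite what you intend. Second, a minor remark: the paper's $t^{-2}$ decay route is slightly stronger than you give it credit for, because it yields the $t$-decay of $\|\partial_t H_t\|_{p'}$ directly, which is also implicitly used to pass to the $F_1$-type bound and is independent of any additional maximal-function machinery.
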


\begin{proof}  Let $a,b\in \R$ such that $\supp \wt G \subset [a,b] \times \R^n$. We may assume $b\ge 2$ for simplicity. Note that we have the assumptions of Proposition \ref{prop:representation} for $\wt G, G$ and so the representations apply. 

We look at (i) first. By \eqref{eq:cvgamma3} we have 
\begin{align}
\label{eq1:representation}
 H_{t}=\partial_{t}\wt H_{t}= \int_{\R} \mS_{t-s}^{\Le ^*} \pd_{s} \wt G_{s}\ ds= \int_{\R} \mS_{t-s}^{\Le ^*}  G_{s}\ ds
\end{align}
with $G_{s}=\div_{x} (\pd_{s} G^\sharp_{s})\in \dot W^{-1,p'}(\R^n; \IC^m)$ by assumption. Due to Lemma \ref{lem:bl}(ii) --- but replacing $(p,\Le )$ by $(p',\Le ^*)$ therein --- we can bound the norm of $\mS_{t}^{\Le ^*}$ as a bounded operator from $\dot W^{-1,p'}(\R^n; \IC^m)$ to $L^{p'}(\R^n; \IC^m)$ uniformly in $t \in \R \setminus \{0\}$. From Minkowski's inequality and the fact that $G$ is smooth with compact support, we can infer
\begin{align*}
 \|H_t\|_{p'} \lesssim \int_{\R} \|\mS_{t-s}^{\Le ^*} G_{s}\|_{{p'}} \ ds \lesssim \int_a^b \|G_s\|_{\dot W^{-1,p'}} \ ds < \infty
\end{align*}
uniformly for all $t \in \R$. Owing to Lemma~\ref{lem:L-1}, an analogous formula applies to $\partial_{t}^kH$ with $\partial_{s}^kG$ in the integral for $k$ an integer and so we also have $\sup_{t\in \R} \|\partial_{t}^kH_{t}\|_{{p'}}<\infty$. Continuity of $t \mapsto \partial_t^k H_t$ and the limits at $\pm\infty$ both in $L^{p'}(\R^n; \IC^m)$ follow by applying the dominated convergence theorem to \eqref{eq1:representation}: Indeed, for $s \in \R$ fixed, Lemma~\ref{lem:bl}(ii) shows that $t \mapsto \mS_{t-s}^{L^*} \partial_s^k G_s$ is continuous on $\R \setminus \{s\}$ and bounded with values in $L^{p'}(\R^n; \IC^m)$. 
  
Finally, we prove to the maximal estimate $\|\NTone H\|_{p'}<\infty$. Proceeding as when $p=2$ in the proof of Lemma~\ref{lem:NTH} --- and with the same notation --- it suffices to show that $F(x)=  \int_{0}^{8b} |\partial_{t}H(t,x)|\ dt$ and $F_{1}(x)= \int_{2b}^{\infty} |\partial_{t}H(t,x)|\ dt$ belong to $L^{p'}(\R^n)$. First, 
$$
     \int_{\R^n} |F(x)|^{p'}\ dx \le (8b)^{p'/p}\int_{\R^n} \int_{0}^{8b} |\partial_{s}H(s,x)|^{p'}\ ds \, dx \le (8b)^{p'}\sup_{t\in \R} \|\partial_{t}H_{t}\|^{p'}_{{p'}}<\infty.
$$
For $F_{1}$ we take a different approach. By Minkowski's inequality
$$
    \|F_{1}\|_{{p'}} \le \int_{2b}^\infty \|\partial_{t}H_{t}\|_{{p'}}\ dt
$$
and we can use integration by parts twice along with $G_{s}=\partial_{s}\wt G_{s}$ to obtain from the representation \eqref{eq:cvgamma2},
$$
\partial_{t}H_{t}=  \int_{a}^b \mS_{t-s}^{\Le ^*} \partial_{s}G_{s}\ ds= \int_{a}^b \partial_{s}^2\mS_{t-s}^{\Le ^*} \wt G_{s}\ ds.
$$
Note that because of $t -b > b$ we stay away from $t-s=0$ and can use the decay of the single layer.  More precisely, the norm of $\partial_{s}^2\mS_{t-s}^{\Le ^*}: \dot W^{-1,p'} \to L^{p'}$ is bounded by $|t-s|^{-2}$ due to Lemma~\ref{lem:bl} and we obtain $\|\partial_{t}H_{t}\|_{{p'}} \lesssim t^{-2}$, which in turn warrants $\|F_{1}\|_{{p'}}<\infty$. This completes the proof of (i).

We turn to (ii). Using \eqref{eq:cvgamma1} and \eqref{eq:cvgamma3} , we have $\nabla_{x} \partial_t^k H(t,x)= \mathrm{p.v.} \!\int_{\R} \nabla_{x}\mS_{t-s}^{\Le ^*} \partial_s^k G(s,x) \ ds$ in $L^2(\ree; \IC^{mn})$ and for fixed $t$ the integrals are \emph{bona fide} Bochner integrals in $L^2(\R^n; \IC^{nm})$. Again by \eqref{eq:cvgamma3}, we have $\pd_{t}\partial_t^k H_{t} = \int_{\R} \partial_t \mS_{t-s}^{\Le ^*} \partial_s^k G_{s} \ ds$ with the same meaning. From here, the proof of $t$-regularity is entirely analogous to (i), relying instead on Lemma~\ref{lem:bl}(i) but with $(p',\Le ^*)$ replacing $(p,\Le )$ as our assumption is $p'\in \cH_{\Le ^*}$. In particular, we obtain $\|\NTone H\|_{p'}<\infty$ by the same argument.

As for the non-tangential maximal estimate $\|\NTone(\nabla H)\|_{p'}<\infty$, we follow again the proof for the $p=2$ case. We also have to estimate $\wt F(x)=  \int_{0}^{8b} |\partial_{t}\nabla_{x}H(t,x)|\ dt $ and $\wt F_{1}(x)= \int_{2b}^{\infty} |\partial_{t}\nabla_{x} H(t,x)|\ dt$ in $L^{p'}(\R^n)$. First,
$$
     \int_{\R^n} |\wt F(x)|^{p'}\ dx \le (8b)^{p'/p}\int_{\R^n} \int_{0}^{8b} |\partial_{t}\nabla_{x}H(t,x)|^{p'}\ dt \, dx \le (8b)^{p'} \sup_{t\in \R} \|\nabla_{x}(\partial_{t}H)\|_{{p'}}^{p'}<\infty.
$$
Next,
$$
     \|\wt F_{1}\|_{{p'}} \le \int_{2b}^\infty \|\partial_{t}\nabla_{x}H_{t}\|_{{p'}}\ dt.
$$
For $t$ in this range we have $t-s>b$ in the integral that represents $\partial_{t}\nabla_{x}H_{t}$ and integrating by parts twice, we obtain
    $$
\partial_{t}\nabla_{x}H_{t}=  \int_{a}^b \nabla_{x}\mS_{t-s}^{\Le ^*} \partial_{s}G_{s}\ ds= \int_{a}^b \partial_{s}^2\nabla_{x}\mS_{t-s}^{\Le ^*} \wt G_{s}\ ds.
$$
Since the $L^{p'}$-operator norm of $\partial_{s}^2\nabla_{x}\mS_{t-s}^{\Le ^*}$ is controlled by $|t-s|^{-2}$, see Lemma~\ref{lem:bl}, 
we have  $ \|\partial_{t}\nabla_{x}H_{t}\|_{{p'}} \lesssim t^{-2}$, which warrants $ \|\wt F_{1}\|_{{p'}}<\infty.$   
\end{proof}

\begin{rem} 
We have not tried to get optimal hypotheses on $G$ for obtaining the desired estimates. In (ii) we did not use $\wt G=\div_{x}G^\sharp$. For (i), it can also be lifted provided we have $C_{0}^\infty(\R^n) \subset \dot W^{-1,p'}(\R^n) $, which holds for $p'>\frac n{n-1}$ by Sobolev embeddings. It is simpler and enough for us, however, to make this assumption throughout.
\end{rem}

\subsection{Proof of Theorem \ref{thm:uniqueDir} when \texorpdfstring{$p\ne 2$}{p =/= 2}}

We assume $p'\in \cH_{\Le ^*}$. We consider a weak solution to $\Le  u=0$ on $\reu$ such that $\NT u \in L^{p}(\R^n)$ and
\begin{align*}
 \lim_{t \to 0} \bariint_{W(t,x)} |u(s,y)|\ {ds \, dy} = 0
\end{align*}
for a.e.\  $x\in \R^n$. Our task is to show $u=0$ almost everywhere.
   
This time, we pick $G$ of the form $G=\partial_{t}\wt G$ with $\wt G \in C_{0}^\infty(\ree; \IC^m) $ and $\wt G=\div_{x} G^\sharp$. Assume, we had managed to prove $\dual u G=0$, that is, $\dual {\partial_{t}u} {\wt G}=0$. Then $\dual {\nabla_{x} \partial_{t}u} { G^\sharp}=0$, where $G^\sharp$ is an arbitrary test function in $\reu$. Hence, $\partial_t u \in L_{loc}^2(\reu; \IC^m)$ is independent of $x$ and we obtain $u(t,x)=g(t)+f(x)$ with $f \in L_{loc}^1(\R^n; \IC^m)$ and $g: (0, \infty) \to \IC^m$ continuous. Let
\begin{align*}
 v(t,x):=\bariint_{W(t,x)} u(s,y)\ ds \, dy = \barint_{t/2}^{2t} g(s)\ ds + \barint_{B(x,t)} f(y)\ dy .
\end{align*}
We know $v(t,x)\to 0$ as $t \to 0$ for a.e.\ $x \in \R^n$. Applying Lebesgue's differentiation theorem to $f$, it follows that $\barint_{t/2}^{2t} g(s)\ ds$ has a limit when $t\to 0$. Call it $\alpha \in \IC^m$. Then $\alpha + f(x)=0$ almost everywhere, which in turn implies that $u$ is independent of $x$. But due to $\NT u \in L^p(\R^n)$ we must have $\NT u = 0$. This  yields $u =  0$ as desired.

Next, the proof of $\dual u G=0$ is  line by line the same as the one for $p=2$ in Section~\ref{Thm1 p=2}. Indeed, thanks to Lemma~\ref{lem:NTHp}, $h:=H(0,\cdot)\in \dot W^{1,p'}(\R^n;\IC^m)$, where $H:=(\Le ^*)^{-1}G$. Thus, existence for $(R)_{p'}^{\Le ^*}$ yields a solution  to $\Le ^*H_{1}=0$ in $\reu$ with  $\NT (\nabla H_{1}) \in L^{p'}(\R^n)$ and boundary trace $h$. The non-tangential estimates needed to run the argument are $\NT u\in L^{p}(\R^n)$ by assumption, $\NTone(\nabla(H-H_{1}))\in L^{p'}(\R^n)$ by Lemma \ref{lem:NTHp}(ii) and the construction of $H_{1}$, and $\NTone((H-H_{1})/t)\in L^{p'}(\R^n)$ by  Lemma \ref{lem:trace}.

\begin{rem}\label{rem:modifp} We can modify the proof of convergence to $0$ of $J_{\varepsilon}$ as in Remark \ref{rem:modif} if we only assume $\barint_{t/2}^{2t}|u(s,\cdot)|\, ds \to 0$ in $ L^p_{loc}(\R^n)$ as $t\to 0$.  \end{rem}

\subsection{Proof of Theorem \ref{thm:uniquereg} when \texorpdfstring{$p\ne 2$}{p =/= 2}}

Let $p\in \cH_{\Le }$. We assume that $\Le u=0$ on $\reu$, that $\NT (\nabla u) \in L^p(\R^n)$ and that 
\begin{align*}
 \lim_{t \to 0} \bariint_{W(t,x)} |u(s,y)|\ {ds \, dy} = 0
\end{align*}
for a.e.\  $x\in \R^n$. We have to show $u=0$ almost everywhere. To this end, we pick $G$ as in the previous proof. The same argument then shows that it suffices to check $\dual u G=0$, noting that  we can use $\NTone(u/t) \in L^p(\R^n)$ from Lemma \ref{lem:trace} to deduce $u = 0$ once it has been seen to depend on $t$ only.
  
Next, the argument to show $\dual u G=0$ is identical to the one for $p=2$ presented in Section~\ref{Thm2 p=2}:  Existence for $(D)_{p'}^{\Le ^*}$ yields a solution  to $\Le ^*H_{1}=0$ in $\reu$ with  $\NT (H_{1}) \in L^{p'}(\R^n)$ and boundary trace $h:=H(0,\cdot)\in L^{p'}(\R^n;\IC^m)$, where $H:=(\Le ^*)^{-1}G$. The required non-tangential estimates are $\NTone (u/t)\in L^{p}(\R^n)$ as seen above as well as $\NTone(H-H_{1})\in L^{p'}(\R^n)$ by Lemma~\ref{lem:NTHp}(i) and the construction of $H_{1}$.

\subsection{Proof of Corollary \ref{cor:wp}} 
\label{sec:proofSF}

Assume $1<p<\infty$ with $p'\in \cH_{\Le ^*}$.
Theorem 1.6 of \cite{AM} shows that well-posedness of $(R)_{p'}^{\Le ^*} $ is equivalent to well-posedness of a modified Dirichlet problem $ (\wt D)_{p}^\Le $.
Combining (1) and (2) of Theorem~1.9 in \cite{AM}, we have that the compatible well-posedness of  $(R)_{p'}^{\Le ^*} $ is equivalent to the compatible well-posedness of this modified Dirichlet problem $ (\wt D)_{p}^\Le $,
\begin{equation*}
%\label{eq:Dp}
 (\wt D)_{p}^\Le   \qquad\qquad
\begin{cases}
  \Le u=0   & \text{on}\ \reu, \\
   S(t\nabla u)\in L^{p}(\R^n)   \\
   \lim_{t \to 0} u(t,\cdot) = f & \text{in  } L^p(\R^n;\IC^m).\end{cases}
\end{equation*}
The following was shown in \cite{AM}, Corollary 1.4:  Any solution to $\Le u=0$ in $\reu$ such that $S(t\nabla u)\in L^{p}(\R^n)$ for $p$ in this range, is, up to a constant, in $C_{0}([0,\infty); L^p(\R^n;\IC^m))$, yielding the existence of $u(0,\cdot)$ and so the limit makes sense. Moreover, there are estimates
$$ \sup_{t\ge 0} \|u(t, \cdot )\|_{p} \lesssim \|S(t\nabla u)\|_{p}$$
and
$$ \|\NT(u)\|_{p} \lesssim  \|S (t\nabla u)\|_{p}, $$
as well as an almost everywhere limit
$$
\lim_{t \to 0} \bariint_{W(t,x)} |u(s,y)-u(0,x)|^2\ {ds \, dy} = 0.
$$
(In fact, a weaker form is stated in \cite{AM} but this stronger form holds and is a consequence of \cite{AusSta}, Theorem 9.9.) 

Thus, given $f\in L^p(\R^n;\IC^m)$, the unique solution (the constant is eliminated)  of $(\wt D)_{p}^\Le $  with data $f$ satisfies
$\|S(t\nabla u)\|_{p}\lesssim \|f\|_{p}$. 
Note then that  $\|f\|_{p}=\|u(0,\cdot)\|_{p} \le \sup_{t\ge 0} \|u(t, \cdot )\|_{p}$ and also $\|u(0,\cdot)\|_{p}\lesssim 
\|\NT u \|_{p}$ by  Fatou's Lemma. In particular, $u$ is a solution to $(D)_{p}^\Le $  and \eqref{eq:equiv}, \eqref{eq:cvL2} hold. Uniqueness of $(D)_{p}^\Le $ on the other hand follows from Theorem~\ref{thm:uniqueDir}. 
%%%%%%%%%%%%%%%%%%%%%%%%%%%%%%%%%%%%%%%%%%%%%%%%%%%%%%%%%%%%%%%%%%%%%%%%%%%%%%%%%%%%%%%%%%%%%%%%%%%%%%%%%%%%%%%%%%%%%%%%%%%%%%%%%%%%%%%%%%%%%%%%%%%%%%%%%%%%%%%%%%%%
\section{Regularity  with Hardy Sobolev data vs Dirichlet with BMO or H\"older continuous data.}
\label{sec:other}

This section is mostly contained in \cite{AM} but we feel it is informative to review it in light of what we just proved. 
When $\frac{n}{n+1}<p\le 1$, the regularity problem becomes the following: given $f\in \dot H^{1,p}(\R^n; \IC^m)$,  solve uniquely (modulo constants),
\begin{equation*}
%\label{eq:Rp}
(R)_{p}^\Le  \qquad\qquad
\begin{cases}
   \Le u=0   & \text{on}\ \reu, \\
    \NT(\nabla u)\in L^{p}(\R^n)    \\
    \lim_{t \to 0} \bariint_{W(t,x)} |u(s,y)-f(x)|\ {ds \, dy}=0 &  \text{for a.e. } x\in \R^n.\end{cases}
\end{equation*}
Here, $\dot H^{1,p}(\R^n; \IC^m)$ is the Hardy-Sobolev space of order 1 above the real Hardy space $H^p$.   
The dual problem is the following Dirichlet problem: given $f\in \dot \Lambda^\alpha(\R^n; \IC^m)$, $0\le \alpha<1$,  solve uniquely (modulo constants)
\begin{equation*}
(D)_{	\alpha}^{\Le^*}  \qquad\qquad
\begin{cases}
  \Le^* u=0   & \text{on}\ \reu, \\
  C_{\alpha}(t \nabla u) \in L^\infty(\R^n)    \\
     \lim_{t \to 0} u(t,\cdot) = f & \text{in } \mathcal{D}'(\R^n;\IC^m)/\IC^m, \\
     \lim_{t \to \infty} u(t,\cdot) = 0 & \text{in } \mathcal{D}'(\R^n;\IC^m)/\IC^m,\end{cases}
\end{equation*}
where 
$$
C_{\alpha}F(x):= \sup \bigg(\frac {1}{r^{2\alpha}  |B(y,r)|}\iint_{T_{y,r}}   |F(t,z)|^2\ \frac{dtdz}{t}\bigg)^{1/2},
$$
taken over all open balls $B(y,r)$ containing $x$, with $T_{y,r}=(0,r)\times B(y,r)$. Here, $\dot \Lambda^\alpha(\R^n; \IC^m)$ designates $ BMO(\R^n;\IC^m)$ for $\alpha=0$  and for $\alpha>0$ it designates the homogeneous H\"older space of exponent $\alpha$.  Note the condition at infinity that does not follow from the interior control in general.  As the interior condition is on the gradient, the problem is posed modulo constants.  

The situation of interest is when the interval of Hardy spaces coincidence contains exponents below $p=1$, as is the case under (DGNM).

\begin{thm}\label{thm:unique} Let  $\frac{n}{n+1}<p\le 1$ with $p\in \cH_{\Le}$ and let $\alpha=n(\frac{1}{p}-1)$. 
\begin{enumerate} 
 \item Existence for   $(R)_{p}^{\Le} $   implies  uniqueness for $(D)_{\alpha}^{\Le^*}$.
 \item Existence for   $(D)_{\alpha}^{\Le^*} $   implies  uniqueness for $(R)_{p}^{\Le}$.
 \item (Compatible) well-posedness for   $(R)_{p}^{\Le} $   implies  (compatible) well-posedness for $(D)_{\alpha}^{\Le^*}$.
\end{enumerate}
\end{thm}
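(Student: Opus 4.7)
The plan is to run the duality scheme of Section~\ref{sec:strategy} at the endpoint, with the $L^p$--$L^{p'}$ pairing replaced by the Hardy--Sobolev / Campanato pairing $(\dot H^{1,p})^* = \dot\Lambda^\alpha$ and the interior $L^p$ averages replaced by tent-space duality between $\NT(\nabla \cdot) \in L^p$ and the Carleson-type functional $C_\alpha$. As in the earlier proofs, pick $G = \partial_t \wt G$ with $\wt G \in C_0^\infty(\ree;\IC^m)$ and $\wt G = \div_x G^\sharp$, together with the cutoff $\theta$. For (i) set $H := \Le^{-1} G$, and for (ii)--(iii) set $H := (\Le^*)^{-1} G$; in each case the identity \eqref{eq:uG} is the engine of the argument.

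The first task is a version of Lemma~\ref{lem:NTHp} adapted to the endpoint. Using the single-layer representations \eqref{eq:cvgamma1}--\eqref{eq:cvgamma3} and Lemma~\ref{lem:bl} for the relevant Hardy exponent $p \in \cH_\Le$ (which includes exponents below $1$ under (DGNM)), verify that the trace $h := H(0,\cdot)$ lies in the correct admissible data space: $\dot H^{1,p}(\R^n;\IC^m)$ for~(i) and $\dot\Lambda^\alpha(\R^n;\IC^m)$ for~(ii), and that $H$ itself satisfies the analogous Carleson / non-tangential maximal bounds via the decay of $\partial_s^2 \mS_{t-s}^{\Le^*}$ for $|t-s|$ large, exactly as in Lemma~\ref{lem:NTHp}. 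Invoking the existence hypothesis then produces the compensator $H_1$ solving the adjoint homogeneous problem with boundary trace $h$, so that $H - H_1$ vanishes at $t=0$.

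With these pieces in hand, I would estimate the two terms on the right-hand side of \eqref{eq:uG} as in Sections~\ref{Thm1 p=2}--\ref{Thm2 p=2}, but with each $L^p$--$L^{p'}$ Hölder pairing replaced by a tent-space pairing: in~(i) the factor $C_\alpha(\nabla u) \in L^\infty$ is paired against the $L^p$ tent / Hardy side produced by $H$ and $H_1$, and in~(ii) the roles are symmetric. The limits $M \to \infty$, $\eps \to 0$, $R \to \infty$ are handled by dominated convergence using the pointwise Whitney-average limits at the boundary (for the $(D)_\alpha^{\Le^*}$ side, the distributional limit $\lim_{t\to 0} u(t,\cdot)=0$ suffices as in Remark~\ref{rem:modifp}, and the explicitly imposed condition at $t \to \infty$ is used to kill the $R$-piece, since a Carleson bound alone gives no decay). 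The conclusion is $\dual u G = 0$ for all admissible $G$; as before, in~(ii) one uses $\NTone(u/t) \in L^p$ from Lemma~\ref{lem:trace} to upgrade $x$-independence of $\partial_t u$ to $u \equiv 0$, while in~(i) the conditions at $t = 0$ and $t = \infty$ in $\mathcal D'/\IC^m$ together force $u \equiv 0$ modulo constants.

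Part~(iii) combines~(i) with the companion existence result of \cite{AM}: (compatible) well-posedness of $(R)_p^{\Le}$ is equivalent to (compatible) well-posedness of a modified Dirichlet problem whose solutions automatically satisfy $C_\alpha(\nabla u) \in L^\infty$ together with the distributional boundary and infinity conditions required in $(D)_\alpha^{\Le^*}$, thereby providing existence for $(D)_\alpha^{\Le^*}$; uniqueness is~(i). The main obstacle throughout is technical: lifting Lemma~\ref{lem:NTHp} and the covering/Carleson estimates that feed \eqref{eq:uG} into the range $p \le 1$, where the classical $L^{p'}$ duality is no longer available and one must instead rely on atomic decompositions together with the abstract identification $H^p_{DB} = H^p_D$ from Lemma~\ref{lem:bl} to supply the uniform bounds on $H$, and then read the resulting quantities against Carleson-type duals via the $\dot H^{1,p}$--$\dot\Lambda^\alpha$ pairing.
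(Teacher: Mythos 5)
Your proposal takes a genuinely different route from the paper. You propose to re-run the $\dual{u}{G}=0$ cutoff argument of Section~\ref{sec:strategy} at the endpoint, with the $L^p$--$L^{p'}$ pairing replaced by $\dot H^{1,p}$--$\dot\Lambda^\alpha$ and tent-space duality, and with a Hardy-space extension of Lemma~\ref{lem:NTHp}. The paper does not do any of this. Instead it quotes three structural facts from \cite{AM} (Theorems~1.1, 1.3, 1.7): (A) any solution $u$ of $\Le u=0$ with $\NT(\nabla u)\in L^p$ has a trace $u|_{t=0}\in \dot H^{1,p}$ and a conormal derivative $\partial_{\nu_A}u|_{t=0}\in H^p$, and $u$ is constant iff both vanish; (B) the analogous trace/uniqueness statement for solutions controlled by $C_\alpha$ and tending to $0$ in $\mathcal D'/\IC^m$ at infinity, with traces in $\dot\Lambda^\alpha$ and $\dot\Lambda^{\alpha-1}$; and (C) the Green's formula $\dual{\partial_{\nu_A}u|_{t=0}}{w|_{t=0}}=\dual{u|_{t=0}}{\partial_{\nu_{A^*}}w|_{t=0}}$ between the two classes. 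Given these, the argument for (i) and (ii) is a three-liner: e.g.\ for (i), take $w$ with vanishing Dirichlet trace, solve $(R)_p^\Le$ with data any $\varphi\in C_0^\infty$, and conclude $\dual{\varphi}{\partial_{\nu_{A^*}}w|_{t=0}}=0$, hence $\partial_{\nu_{A^*}}w|_{t=0}=0$ and $w$ constant by (B). Part (iii) is quoted directly from Theorems~1.6 and~1.9(1) of \cite{AM}, as you essentially also say.

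Two remarks on your plan. First, the work you flag as ``technical'' is not peripheral: the endpoint analogues of Lemma~\ref{lem:NTHp}, the pairing lemmas (Lemma~\ref{lem:product}, Lemma~\ref{lem:infinity}) and the covering/Carleson estimates in the range $p\le 1$ are, essentially, the content of the trace theorems (A)--(C) from \cite{AM}, which the paper deliberately reuses rather than re-proves. So your route, if carried out, would amount to re-deriving a large portion of \cite{AM}. Second, your final step for (i) is more delicate than in the $1<p<\infty$ case. After $\dual{u}{G}=0$ you only obtain $\partial_t u$ independent of $x$, i.e.\ $u(t,x)=g(t)+f(x)$, and the distributional boundary condition at $t=0$ alone does not rule out non-trivial $g$; you must additionally argue that $C_\alpha(\nabla u)\in L^\infty$ and the equation $\Le^*u=0$ force $g$ to be constant (e.g.\ $g'(t)\equiv a\neq 0$ already makes $C_\alpha(\nabla u)\equiv\infty$, but this deduction needs to be spelled out since the Whitney-average boundary condition of Sections~\ref{sec:p=2}--\ref{sec:pnot2} is not available here). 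The paper avoids this entirely because the ``constant iff traces vanish'' characterizations in (A) and (B) already package the required information.
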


\begin{proof}  

For item (iii),  the implication concerning well-posedness  is in Theorem 1.6 in \cite{AM}; concerning compatible well-posedness, the implication is contained in Theorem 1.9(1) in \cite{AM}. For (i) and (ii) we recall that   Theorems 1.1, 1.3 and 1.7 of \cite{AM} yield the following properties. 
\begin{itemize}
 \item[(A)] For any weak solution $u$ to $\Le u=0$ on $\reu$ with $\NT(\nabla u)\in L^p(\R^n)$, the conormal derivative $\partial_{\nu_{A}}u|_{t=0}$ exists in $H^p(\R^n;\IC^m)$, $u|_{t=0}$ exists in $\dot H^{1,p}(\R^n; \IC^m)$, and   $u$  is constant if and only if  $u|_{t=0}=0$ and $\partial_{\nu_{A}}u|_{t=0}
=0$ in the respective spaces.  

 \item[(B)]  For any weak solution $w$ to $\Le^*w=0$ on $\reu$ with  $C_{\alpha}(t \nabla w) \in L^\infty(\R^n)$  and $w(t,\cdot)$ converging to $0$ in $\mD'(\R^n; \IC^m)$ modulo constants as $t\to \infty$,  $w|_{t=0}$ exists in $\dot \Lambda^\alpha(\R^n; \IC^m)$ and $\partial_{\nu_{A^*}}w|_{t=0}$ exists in $\dot \Lambda^{\alpha-1}(\R^n; \IC^m)$, and  $w$  is constant if and only if  $w|_{t=0}=0$ and $\partial_{\nu_{A^*}}w|_{t=0}
=0$ in the respective spaces. Here, a distribution is in  $\dot \Lambda^{\alpha-1}(\R^n; \IC^m)$ if it is the divergence of an element in $\dot \Lambda^{\alpha}(\R^n; \IC^{mn})$.  

\item[(C)] With $u$ and $w$ as above, there is a Green's formula
\begin{equation*} 
\label{eq:green}
\dual {\partial_{\nu_{A}}u|_{t=0}} {w|_{t=0}} = \dual {u|_{t=0}}{\partial_{\nu_{A^*}}w|_{t=0}} .
\end{equation*}
Here, the first pairing is the $\dual {H^p(\R^n;\IC^m)} {\dot \Lambda^\alpha(\R^n; \IC^m)} $ sesquilinear duality while the second one is the 
$\dual{\dot H^{1,p}(\R^n; \IC^m)}{\dot \Lambda^{\alpha-1}(\R^n; \IC^m)}$ sesquilinear duality.
\end{itemize}
With this at hand the arguments are very simple. For (i), assume that $w$ is given with $w|_{t=0}=0$. Let $\varphi \in C_{0}^\infty(\R^n; \IC^m)$ and solve $(R)_{p}^\Le$ with $u|_{t=0}=\varphi$ (modulo constants). It follows from (C) that $\dual {\varphi}{\partial_{\nu_{A^*}}w|_{t=0}}=0$.  This means that also $\partial_{\nu_{A^*}}w|_{t=0}=0$ as a distribution, hence $w$ is constant using (B). For (ii), assume that $u$ is given with $u|_{t=0}=0$. Let $\varphi \in C_{0}^\infty(\R^n; \IC^m)$ and solve $(D)_{\alpha}^{\Le^*}$ with $w|_{t=0}=\varphi$ (modulo constants). By (C) we have $\dual {\partial_{\nu_{A}}u|_{t=0}}{\varphi}=0$.  This means $\partial_{\nu_{A}}u|_{t=0}=0$ as a distribution, hence $u$ is constant using (A).  
\end{proof}
%%%%%%%%%%%%%%%%%%%%%%%%%%%%%%%%%%%%%%%%%%%%%%%%%%%%%%%%%%%%%%%%%%%%%%%%%%%%%%%%%%%%%%%%%%%%%%%%%%%%%%%%%%%%%%%%%%%%%%%%%%%%%%%%%%%%%%%%%%%%%%%%%%%%%%%%%%%%%%%%%%%% 
\section{Proof of the layer potential representation}\label{sec:prooflayer}

Now, we come to justify \eqref{eq:L-1} of the introduction in proving Proposition \ref{prop:representation}. For this we set for $0<\varepsilon<R<\infty$,  
\begin{equation*}
%\label{eq:fundsol}
(\Gamma_{\varepsilon,R}f)_{t}= \int_{\varepsilon<|t-s|<R} \mS_{t-s}^\Le  f_{s} \ ds, \quad t\in \R.
\end{equation*} 
We  give the main properties of these approximants and then show how they converge to $\Le ^{-1}$. The proofs depend only on the case $p=2$ of Lemma \ref{lem:bl}.

\begin{lem}\label{lem:approx} Assume $f\in C_{0}^\infty(\ree;\IC^m)$. If $n=1,2$ assume furthermore that $f=\div_{x}F$ coordinatewise for some $F\in C_{0}^\infty(\R^n; \C^{mn})$. Then for fixed  $0<\varepsilon<R<\infty$, we have $\Gamma_{\varepsilon,R}f \in W^{1,2}(\ree; \IC^m)$ and $t\mapsto (\Gamma_{\varepsilon,R}f)_{t} \in C^\infty(\R; W^{1,2}(\R^n; \IC^m))$ with all derivatives bounded.
\end{lem}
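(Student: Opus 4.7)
The plan is to first perform the change of variables $u=t-s$ in the defining integral to obtain
\[
(\Gamma_{\varepsilon,R}f)_{t} = \int_{\varepsilon<|u|<R} \mS_{u}^\Le f_{t-u}\ du,
\]
in which the domain of integration is now independent of $t$ and the $t$-dependence is carried entirely by the smooth, compactly supported Banach-space-valued map $t\mapsto f_{t-u}$. Since $f$ is supported in $[a,b]\times \R^n$, the integrand vanishes unless $t-u\in[a,b]$, which forces $t\in[a-R,b+R]$; hence $(\Gamma_{\varepsilon,R}f)_{t}$ already has compact support in $t$.

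Next, for each fixed $t$ and each $u$ with $\varepsilon<|u|<R$, I would use the boundedness statements of Lemma~\ref{lem:bl} with $p=2$ (both $2\in \cH_{\Le }$ and $2'=2\in \cH_{\Le^*}$). Part (i) gives $\|\mS_u^\Le f_{t-u}\|_{\dot W^{1,2}(\R^n)}\lesssim \|f_{t-u}\|_{L^2(\R^n)}$ with constant independent of $u\in\R^*$, yielding a uniform bound on $\|\nabla_x \mS_u^\Le f_{t-u}\|_{L^2}$. Part (ii) gives $\|\mS_u^\Le f_{t-u}\|_{L^2(\R^n)}\lesssim \|f_{t-u}\|_{\dot W^{-1,2}(\R^n)}$ with constant independent of $u$, and this is where the divergence hypothesis enters: for $n\ge 3$ we have $C_0^\infty(\R^n)\subset \dot W^{-1,2}(\R^n)$ by Sobolev embedding, whereas for $n=1,2$ the assumption $f=\div_x F$ with $F\in C_0^\infty$ is exactly what makes $f_{t-u}$ an element of $\dot W^{-1,2}(\R^n)$ with norm controlled by $\|F_{t-u}\|_{L^2}$. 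Integrating in $u$ over the bounded region $\{\varepsilon<|u|<R\}$ and using Minkowski's inequality then yields
\[
\sup_{t\in\R}\|(\Gamma_{\varepsilon,R}f)_{t}\|_{W^{1,2}(\R^n;\IC^m)}<\infty.
\]

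To handle the $t$-regularity, I would differentiate under the integral sign in the change-of-variables formulation. Since $t\mapsto f_{t-u}$ is smooth from $\R$ into $C_0^\infty(\R^n;\IC^m)$ with uniformly compactly supported derivatives, and since $\mS_u^\Le$ is a bounded operator with norm uniform in $u\in\{\varepsilon<|u|<R\}$, the difference-quotient argument together with dominated convergence (in $W^{1,2}(\R^n;\IC^m)$) gives
\[
\partial_t^{k}(\Gamma_{\varepsilon,R}f)_{t}=\int_{\varepsilon<|u|<R}\mS_u^\Le (\partial_t^{k}f)_{t-u}\ du,\qquad k\ge 0.
\]
Because $\partial_t$ commutes with $\div_x$, the function $\partial_t^{k}f$ satisfies the same hypotheses as $f$ (including the divergence form when $n\le 2$), so the previous paragraph applies to each derivative. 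This yields boundedness of all $t$-derivatives as maps $\R\to W^{1,2}(\R^n;\IC^m)$, hence smoothness in $t$ with values in $W^{1,2}(\R^n;\IC^m)$ and, combined with the compact $t$-support, membership of $\Gamma_{\varepsilon,R}f$ in $W^{1,2}(\ree;\IC^m)$.

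The main obstacle is simply bookkeeping: keeping track of whether one invokes part (i) or part (ii) of Lemma~\ref{lem:bl} for each of the $L^2$ and gradient-in-$x$ estimates, and ensuring the $\dot W^{-1,2}$ interpretation of $f_{t-u}$ is available in low dimensions, where the divergence-form assumption is exactly tailored to this purpose.
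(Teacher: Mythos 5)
Your proof is correct and its overall shape matches the paper's: interpret $\mS_u^\Le$ as acting $L^2\cap\dot W^{-1,2}(\R^n)\to W^{1,2}(\R^n)$ uniformly in $u$ (with the divergence hypothesis for $n\le 2$ to make $f_{t-u}\in\dot W^{-1,2}$), observe finiteness of the $u$-integral over the annulus $\{\eps<|u|<R\}$, and iterate $\partial_t(\Gamma_{\eps,R}f)=\Gamma_{\eps,R}(\partial_s f)$ on derivatives of $f$, which satisfy the same hypotheses.

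Where you genuinely diverge is in how the identity $\partial_t(\Gamma_{\eps,R}f)=\Gamma_{\eps,R}(\partial_s f)$ is obtained. The paper differentiates $\int_{\eps<|t-s|<R}\mS_{t-s}^\Le f_s\,ds$ in $t$ directly; the domain depends on $t$, so Leibniz produces boundary terms $w_{t,\eps}-w_{t,R}$ with $w_{t,\alpha}:=\mS_\alpha^\Le f_{t-\alpha}-\mS_{-\alpha}^\Le f_{t+\alpha}$, which must then be cancelled by writing $\partial_t\mS_{t-s}^\Le=-\partial_s\mS_{t-s}^\Le$ and integrating by parts in $s$. Your substitution $u=t-s$ freezes the domain and moves the $t$-dependence into the smooth compactly supported function $f_{t-u}$, so you differentiate under the integral sign with no boundary terms at all. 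For the purpose of Lemma~\ref{lem:approx} alone, your route is cleaner and mathematically equivalent. The paper keeps the longer computation deliberately: the intermediate formula \eqref{eq:dtst} featuring $w_{t,\eps}$, $w_{t,R}$ is reused in the proof of Proposition~\ref{prop:representation}, where the limits $\eps\to 0$, $R\to\infty$ of these boundary terms are tracked explicitly via Lemma~\ref{lem:ConvergenceLemma}. So your proof is a legitimate (and arguably tidier) alternative for this lemma, but if one adopts it one must rederive the boundary-term version separately later. The other small difference — you obtain square-integrability in $t$ from compact $t$-support and the uniform-in-$t$ $W^{1,2}(\R^n)$ bound, while the paper invokes operator-valued Young convolution — is immaterial.
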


\begin{proof}
Sobolev embeddings yield $f \in W^{-1,2}(\ree; \IC^m)$, except when $n=1,2$, where the condition $f=\div_{x}F$ is also required. This is the case under our assumption. Thus $u=\Le ^{-1}f$ is well-defined in $\dot W^{1,2}(\ree;\IC^m)$. 
Also, $t\mapsto f_{t}$ is in $C_{0}^\infty(\R; L^2(\R^n;\IC^m)\cap \dot W^{-1,2}(\R^n;\IC^m))$, using again that for $n=1,2$ we have $\int_{\R^n} f_{t}=0$ for all $t\in \R$ by assumption. Lemma \ref{lem:bl} yields that $t\mapsto \mS_{t}^\Le$ is uniformly bounded and in particular belongs to $L_{loc}^1(\R; \cL(X,Y))$, where 
\begin{align*}
 (X,Y)= (\dot W^{-1,2}(\R^n;\IC^m), L^2(\R^n;\IC^m)) \qquad \text{or} \qquad (X,Y)= (L^2(\R^n;\IC^m), \dot W^{1,2}(\R^n;\IC^m)).
\end{align*}
By (operator-valued) convolution $t\mapsto (\Gamma_{\varepsilon,R}f)_{t}$ is in $L^\infty(\R; W^{1,2}(\R^n;\IC^m)) \cap L^2(\R; W^{1,2}(\R^n;\IC^m))$.
In particular, $\Gamma_{\varepsilon,R}f\in L^2(\ree;\IC^m)$ and $\nabla_{x}\Gamma_{\varepsilon,R}f\in L^2(\ree; \IC^{nm})$. We next compute $\partial_{t}\Gamma_{\varepsilon,R}f$ by differentiating the integral and  integrating by parts. Setting $w_{t,\varepsilon} :=\mS_{\varepsilon}^\Le  f_{t-\varepsilon}-\mS_{-\varepsilon}^\Le  f_{t+\varepsilon}$, we obtain
\begin{align}
\label{eq:dtst}
\begin{split}
(\partial_{t}\Gamma_{\varepsilon,R}f)_{t} & = \int_{\varepsilon<|t-s|<R} \partial_{t}\mS_{t-s}^\Le  f_{s} \ ds + w_{t,\varepsilon}-w_{t,R} \\
& = -   \int_{\varepsilon<|t-s|<R} \partial_{s}\mS_{t-s}^\Le  f_{s}\ ds +w_{t,\varepsilon}-w_{t,R}   \\
&  =   \int_{\varepsilon<|t-s|<R} \mS_{t-s}^\Le  \partial_{s}f_{s}\ ds \\
& = \Gamma_{\varepsilon,R}(\partial_{s}f)_{t}.
\end{split}
\end{align}
Since $\partial_{s}f$ has the same properties as $f$, we can apply the above to $\Gamma_{\varepsilon,R}(\partial_{s}f)$ and conclude that  $\partial_{t}\Gamma_{\varepsilon,R}f$ is in  $L^\infty(\R; W^{1,2}(\R^n;\IC^m)) \cap L^2(\R; W^{1,2}(\R^n;\IC^m))$. Altogether, we have seen $\Gamma_{\varepsilon,R}f \in W^{1,2}(\ree;\IC^m)$ and that $t\mapsto (\Gamma_{\varepsilon,R}f)_{t}$  is bounded and Lipschitz continuous into $W^{1,2}(\R^n;\IC^m)$. Iterating $t$-derivatives from $\partial_{t}(\Gamma_{\varepsilon,R}f)=\Gamma_{\varepsilon,R}(\partial_{s}f)$ and using Lemma~\ref{lem:bl} yields the claim.
\end{proof}

The following lemma will be useful in the proof of convergence.

\begin{lem}
\label{lem:ConvergenceLemma}
Let $X, Y$ be Banach spaces, $T: [0,\infty) \to \Lop(X,Y)$ uniformly bounded and strongly continuous and let $f \in C_0(\R; X)$. Then for any $s_0 \in [0,\infty]$ there is convergence in $Y$ of
\begin{align*}
 \lim_{s \to s_0} T_s f_{t+s} = T_{s_0} f_{t+s_0},
\end{align*}
uniformly in $t \in \R$. If in addition $f \in L^p(\R; X)$ for some $p \in [1, \infty)$, then convergence also holds in $L^p(\R, dt; Y)$.
\end{lem}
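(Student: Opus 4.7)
The proof rests on two elementary observations. First, because $f\in C_0(\R;X)$, $f$ extends continuously to the one-point compactification of $\R$ by setting $f_{\pm\infty}=0$; in particular $f$ is uniformly continuous on $\R$ and its range together with $\{0\}$ forms a compact set $K_0\subset X$. Second, on any compact subset $K\subset X$, strong continuity of $T$ at $s_0$, combined with the uniform bound $C := \sup_s \|T_s\|_{\Lop(X,Y)} < \infty$, upgrades to the uniform convergence $\sup_{x\in K} \|(T_s - T_{s_0})x\|_Y \to 0$ as $s \to s_0$, by a standard three-$\varepsilon$ argument: cover $K$ by finitely many balls $B(x_i,\delta)$ with $2C\delta<\varepsilon/2$, then pick $s$ close enough to $s_0$ that $\|(T_s-T_{s_0})x_i\|_Y<\varepsilon/2$ at each of the finitely many centres.

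For finite $s_0$, I would split
\[
 T_s f_{t+s} - T_{s_0} f_{t+s_0} = T_s\bigl(f_{t+s}-f_{t+s_0}\bigr) + (T_s - T_{s_0}) f_{t+s_0}.
\]
The first term is bounded by $C\|f_{t+s}-f_{t+s_0}\|_X$, which tends to $0$ uniformly in $t$ by uniform continuity of $f$; the second is bounded uniformly in $t$ by $\sup_{x\in K_0} \|(T_s-T_{s_0})x\|_Y$, which tends to $0$ by the second observation applied to $K_0$. For $s_0=\infty$, which I read as $T_\infty:=\text{s-}\lim_{s\to\infty}T_s=0$ and $f_{\pm\infty}=0$, I would fix $\varepsilon>0$, choose $M$ with $\|f_r\|_X<\varepsilon/(2C)$ for $|r|>M$, and split according to whether $|t+s|\le M$ or $|t+s|>M$. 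On the latter region $\|T_s f_{t+s}\|_Y\le C\|f_{t+s}\|_X<\varepsilon/2$; on the former, $f_{t+s}$ lies in the compact set $K_M := \{f_r:|r|\le M\}$, on which $T_s\to 0$ uniformly by the second observation.

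For the $L^p(\R, dt; Y)$ statement, set $g_s(t):=\|T_s f_{t+s} - T_{s_0} f_{t+s_0}\|_Y$. The uniform pointwise convergence just established already yields $\int_{-N}^N g_s(t)^p\, dt\to 0$ for every fixed $N$. For finite $s_0$, the triangle inequality and the substitutions $r=t+s$, $r=t+s_0$ give
\[
 \int_{|t|>N} g_s(t)^p \, dt \lesssim C^p \int_{|r-s|>N} \|f_r\|_X^p \, dr + C^p \int_{|r-s_0|>N} \|f_r\|_X^p \, dr,
\]
which is uniformly small for $s$ in a bounded neighbourhood of $s_0$ and $N$ large, since $\|f\|_X\in L^p(\R)$. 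For $s_0=\infty$ I would instead substitute $r=t+s$ directly in $\int_\R \|T_s f_{t+s}\|_Y^p\, dt = \int_\R \|T_s f_r\|_Y^p \, dr$ and split at $|r|=M$: the outer piece is at most $C^p \int_{|r|>M}\|f_r\|_X^p\, dr$, small for $M$ large, and the inner piece is bounded by $2M \cdot \sup_{x\in K_M}\|T_s x\|_Y^p$, which tends to $0$ as $s\to\infty$. Choosing $M$ first to absorb the tail and then letting $s\to s_0$ completes the proof.

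The only non-trivial point is bookkeeping at $s_0=\infty$: the additive splitting used for finite $s_0$ becomes useless there (one term vanishes by convention, the other coincides with the quantity being estimated), so one must combine smallness of $\|f_r\|_X$ for $|r|$ large with uniform-on-compacta vanishing of $T_s$ at infinity. Once this adjustment is in place, no step demands more than the stated hypotheses and elementary facts.
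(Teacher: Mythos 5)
Your proof is correct, and for finite $s_0$ it coincides with the paper's: the decomposition $T_s f_{t+s} - T_{s_0}f_{t+s_0} = T_s(f_{t+s}-f_{t+s_0}) + (T_s - T_{s_0})f_{t+s_0}$ combined with the upgrade of strong convergence to uniform convergence on compacta. Where you genuinely diverge --- and in fact improve on the paper --- is at $s_0=\infty$. You correctly observe that the additive decomposition degenerates there: with $f_{\infty}=0$ it collapses to the trivial bound $\|T_s f_{t+s}\|_Y \lesssim \|f_{t+s}\|_X$, and $\sup_{t\in\R}\|f_{t+s}\|_X$ does \emph{not} tend to $0$ as $s\to\infty$ (take $t=-s$). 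The paper applies the same decomposition to all $s_0\in[0,\infty]$ and asserts the first term vanishes uniformly in $t$ ``since $f$ is uniformly continuous,'' which is valid only for finite $s_0$, where $t+s$ is uniformly close to $t+s_0$; at $s_0 = \infty$ the point $t+s$ is not uniformly close to $\infty$, so this step breaks. Your fix --- splitting at $|t+s|=M$ so that the outer piece is controlled by the tail of $f$ and the inner piece by uniform-on-$K_M$ decay of $T_s$ --- is the right remedy. You also make explicit the convention $T_\infty := \mathrm{s\text{-}lim}_{s\to\infty} T_s = 0$ needed for the $s_0=\infty$ assertion to hold at all (otherwise the constant family $T_s=\mathrm{Id}$ is a counterexample); the paper leaves this unstated even though it is exactly how the lemma is invoked in the proof of Proposition~\ref{prop:representation}. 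Your $L^p$ argument mirrors this case distinction correctly, whereas the paper's cutoff-$R$ argument, restricting to $s\in(s_0-1,s_0+1)$, is again written only for finite $s_0$.
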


\begin{proof}
We set $f(\pm \infty) := 0$, so that $f$ becomes (uniformly) continuous on $\R \cup \{\pm \infty\}$, viewed as a compact topological space. Since $T$ is uniformly bounded,
\begin{align}
\label{eq1:ConvergenceLemma}
 \|T_s f_{t+s} - T_{s_0}f_{t+s_0}\|_Y
\lesssim \|f_{t+s} - f_{t+s_0}\|_Y + \|(T_{s} - T_{s_0})f_{t+s_0}\|_Y.
\end{align}
In the limit $s \to s_0$ the first term on the right tends to $0$ uniformly in $t \in \R$ since $f$ is uniformly continuous. For the second term we first note that $K:= \{f_{t+s_0} : t \in \R \cup \{\pm \infty\}\}$ is the continuous image of a compact set, and hence is compact in $X$. Thus, the strong convergence $T_s - T_{s_0} \to 0$ as $s \to s_0$ improves to uniform strong convergence on $K$. Therefore we also get convergence to $0$ uniformly in $t \in \R$ for the second term above and the proof of the first claim is complete. 

Now suppose in addition $f \in L^p(\R; X)$. Taking \eqref{eq1:ConvergenceLemma} to the $p$-th power and integrating in $t$ gives
\begin{align*}
 \int_{\R} \|T_s f_{t+s}& - T_{s_0}f_{t+s_0}\|_Y^p \ dt \\
 &\lesssim \int_{(-R,R)}\|f_{t+s} - f_{t+s_0}\|_X^p \ dt + \int_{\R \setminus (-R,R)} \|f_{t+s} - f_{t+s_0}\|_X^p \ dt + \int_{\R} \|(T_{s} - T_{s_0})f_{t+s_0}\|_Y^p \ dt,
\end{align*}
where $R>0$ is a degree of freedom. Given $\eps > 0$, we can first choose $R$ large enough to guarantee that for all $s \in (s_0-1,s_0+1)$ the middle term is bounded by $\eps$. Then the first integral vanishes in the limit $s \to s_0$ by uniform continuity of $f$ and the third one vanishes by dominated convergence taking into account boundedness and strong continuity of $T$.
\end{proof}

In order to proceed, we eventually have to give the abstract definition of the single layer $\mS_{t}^\Le $. All of the following material and further background can be found in \cite{AusSta} and we report here only on the essentials required to follow the line of reasoning. We identify $\IC^{(1+n)m}$ with $\IC^m \times \IC^{nm}$ and represent vectors as $F = \begin{bmatrix} F_\pe \\ F_\pa \end{bmatrix}$ accordingly. 

Following the calculation on p.68 of \cite{AA1}, there are a constant coefficient first order differential operator $D$ acting on $\IC^{(1+n)m}$-valued functions, a bounded multiplication operator $B = B(x)$ on $L^2(\R^n; \IC^{(1+n)m})$ and purely algebraic way of rewriting the elliptic system $\Lop u = f$ as
\begin{align*}
 \partial_t F + DB F = \begin{bmatrix} \Lop u \\ 0 \end{bmatrix}, \quad \text{where } F = \nabla_A u: = \begin{bmatrix} (A\nabla u)_\pe \\ \nabla_x u \end{bmatrix}.
\end{align*}
Here, $u \in \W_{loc}^{1,2}(\ree; \IC^m)$ and the autonomous first order equation for its conormal gradient $F = \nabla_A u$ is understood in the sense of distributions. The multiplication operator $B$ is designed in such a way that
\begin{align}
\label{eq:B}
B \begin{bmatrix} (AF)_\pe \\ F_\pa \end{bmatrix} = \begin{bmatrix} F_\pe \\ (A F)_\pa \end{bmatrix}, \qquad F \in \IC^{(1+n)m}.
\end{align}
The operator $DB$ has an $H^\infty$-functional calculus on $\cH = \clos{\ran(D)}$, the closure of the range of $D$ in $L^2(\R^n; \IC^{(1+n)m})$, allowing us to define a bounded operator $\varphi(DB)$ on $\cH$ for any bounded and holomorphic function $\varphi$ on a suitably large double sector around the real axis. Recall that $\cH$ has been defined in connection with the ellipticity condition \eqref{eq:accrassumption} and that it contains $L^2(\R^n; \IC^m) \times \{0\}$. With $\chi^\pm$ the indicator functions of $\IC^\pm$, functional calculus provides a means of defining $e^{\mp tD B}\chi^{\pm}(D B)$ for $t>0$ as a bounded operator on $\cH$. These are the bounded holomorphic $C_0$-semigroups generated by $\mp DB$ on $\chi^\pm(DB) \cH$.

From \cite{AusSta}, p.100-101 with correction of an unfortunate typo in (82) and (84), we then have
\begin{equation} 
\label{eq:st}
\mS_{t}^\Le f:= \begin{cases}
  -\bigg(D^{-1} e^{-tD B}\chi^{+}(D B) \begin{bmatrix} f \\ 0\end{bmatrix} \bigg)_{\no}   & \text{if } t>0, \\
\bigg(D^{-1} e^{-tD B}\chi^{-}(D B) \begin{bmatrix} f \\ 0\end{bmatrix}\bigg)_{\no}      &  \text{if } t<0,
\end{cases}
\end{equation}
 and 
 \begin{equation} 
\label{eq:gradst}
\gradA\mS_{t}^\Le f= \begin{cases}
 +  e^{-tDB}\chi^{+}(D B) \begin{bmatrix} f \\ 0\end{bmatrix}    & \text{if } t>0, \\
 - e^{-tD B}\chi^{-}(D B) \begin{bmatrix} f \\ 0\end{bmatrix}      &  \text{if } t<0.
\end{cases}
\end{equation}
Here,  $D^{-1}$ is  the closed extension of $D^{-1}:{\ran(D)}\to\dom(D) $ to $\overline{\mathsf{R}(D)} = \mathcal{H}$ with values in the abstract $D$-adapted Sobolev space $\dot{\mathcal{H}}_D^1$.

\begin{proof}[Proof of Proposition  \ref{prop:representation}]
We begin with \eqref{eq:cvgamma1}. As $f \in \dot W^{-1,2}(\ree)$, we can define $u:= \Lop^{-1}f$ in $\dot W^{1,2}(\ree)$. Defining the conormal gradient $F:= \nabla_A u$, we have $F_t \in \cH$ for every $t>0$ by construction and $F \in L^2(\ree; \IC^{(1+n)m})$. We have seen above that in the distributional sense $pd_{t}F+ \P\M F= \begin{bmatrix} f \\ 0\end{bmatrix}$ but as both $F$ and $f$ are smooth functions of $t$ valued in $L^2(\R^n; \IC^{(1+n)m})$, see Lemma~\ref{lem:L-1}, this first order equation also holds in the strong sense. Replacing $\begin{bmatrix} f \\ 0 \end{bmatrix}$ by this differential equation in \eqref{eq:gradst} and integrating by parts, we obtain for all $t\in \R$, $0<\varepsilon<R<\infty$, 
\begin{align*}
 \int_{\varepsilon<|t-s|<R}\gradA\mS_{t-s}^\Le  f_{s}\ ds& =
 \e^{-\varepsilon \P\M} \chi^{+}(\P\M) F_{t-\varepsilon}
+ \e^{\varepsilon \P\M} \chi^{-}(\P\M) F_{t+\varepsilon}
\\
& \qquad  -\e^{-R \P\M} \chi^{+}(\P\M) F_{t- R}
- \e^{R \P\M} \chi^{+}(\P\M) F_{t+R}.
\end{align*}
Limits in $\eps, R$ for the terms on the right all fall under the scope of Lemma~\ref{lem:ConvergenceLemma} applied with $X = Y = \cH \subset L^2(\R^n; \IC^{(1+n)m})$. Thus,
\begin{align}
\label{eq1:representation_proof}
\lim_{\varepsilon\to 0, R\to \infty}\int_{\varepsilon<|t-s|<R}\gradA\mS_{t-s}^\Le f_{s}\ ds= \chi^{+}(\P\M) F_{t} + \chi^{-}(\P\M) F_{t}=F_{t},
\end{align}
in $L^2(\R^n; \IC^{(1+n)m})$ uniformly in $t \in \R$ and in $L^2(\R; L^2(\R^n; \IC^{(1+n)m})) \simeq L^2(\ree; \IC^{(1+n)m})$. Taking the $\pa$-component, we obtain in particular convergence of $\nabla_{x} \Gamma_{\varepsilon,R}f$ to $F_{\ta}=\nabla_{x}u=\nabla_{x}\Le ^{-1}f$ in $L^2(\ree; \IC^{nm})$. As for convergence of $\partial_t \Gamma_{\varepsilon,R}f$, we keep \eqref{eq:B} in mind, multiply the previous equation by the bounded operator $B$ on $L^2(\R^n; \IC^{(1+n)m})$ and take the $\pe$-component, to give
\begin{align*}
\lim_{\varepsilon\to 0, R\to \infty}\int_{\varepsilon<|t-s|<R} (B \gradA\mS_{t-s}^\Le f_{s})_\pe \ ds 
= \lim_{\varepsilon\to 0, R\to \infty}\int_{\varepsilon<|t-s|<R} \partial_t \mS_{t-s}^\Le f_{s} \ ds
= (B F_t)_\pe = \partial_t u,
\end{align*}
in $L^2(\R^n; \IC^{m})$ uniformly in $t \in \R$ and in $L^2(\ree; \IC^{m})$. Now, let us have a look at \eqref{eq:dtst}. We have seen that the integral in the first line enjoys the desired convergence. The convergence of $w_{t,\varepsilon}-w_{t,R}$ will once again be a direct application of Lemma~\ref{lem:ConvergenceLemma}. Indeed, $\mS_{t}^\Le$ viewed as a bounded operator $(L^2 \cap \dot W^{-1,2})(\R^n; \IC^m) \to W^{1,2}(\R^n; \IC^m))$ is uniformly bounded and strongly continuous with respect to $t\ \in \R$ from Lemma~\ref{lem:bl} and $t\mapsto f_{t}$ valued in $(L^2 \cap \dot W^{-1,2})(\R^n; \IC^m)$ is continuous with compact support. Thus,
\begin{align}
\label{eq2:representation_proof}
 \lim_{\eps \to 0, R \to \infty} w_{t,\varepsilon}-w_{t,R} = (\mS_{0}^\Le f_t - \mS_{0}^\Le f_t) - 0 = 0
\end{align}
even in $W^{1,2}(\R^n; \IC^m)$ uniformly in $t \in \R$ and in $L^2(\R;W^{1,2}(\R^n ; \IC^m))$. In particular, all four lines in \eqref{eq:dtst} share convergence in $L^2(\ree; \IC^m)$ to the limit $\partial_t u = \partial_t \Le^{-1}ft$. By looking just at $\partial_t \Gamma_{\varepsilon,R}f$, we complete the proof of \eqref{eq:cvgamma1}. 

However, the other lines of \eqref{eq:dtst} -- together with the equality $\Le ^{-1}(\partial_{t}f)=  \partial_{t} \Le ^{-1}f$ in $L^{2}(\ree; \IC^m)$ noted in Lemma~\ref{lem:L-1}-- also give all the limits stated in \eqref{eq:cvgamma2} and those stated in \eqref{eq:cvgamma3} in the sense of $L^2(\R^n; \IC^m)$-convergence, uniformly in $t \in \R$. The missing uniform $L^2(\R^n; \IC^m)$-convergence for $\int_{\R} \nabla_x \mS_{t-s}^\Le  (\partial_{s}f_{s})\ ds$ follows from \eqref{eq1:representation_proof} with $f$ replaced by $\partial_s f$ and that of $\int_{\R} \nabla_x \partial_{t}\mS_{t-s}^\Le  f_{s}\ ds$ is a consequence of \eqref{eq:dtst} since we have uniform $W^{1,2}(\R^n; \IC^m)$-convergence 
of the error terms in \eqref{eq2:representation_proof}.

Finally, the integrals in \eqref{eq:cvgamma3} are norm convergent in $W^{1,2}(\R^n; \IC^m)$ for fixed $t$ since for all integers $k\geq 0$, Lemma~\ref{lem:bl} guarantees  that $\partial_s^k S_s: (L^2 \cap \dot W^{-1,2})(\R^n; \IC^m) \to W^{1,2}(\R^n; \IC^m)$ is uniformly bounded in $s \in \R$ and by assumption $s \mapsto \partial_s^k f_s$ is continuous with compact support valued in $(L^2 \cap \dot W^{-1,2})(\R^n; \IC^m)$.
\end{proof}

Next, we shall explain how the properties stated in Lemma~\ref{lem:bl} follow from \cite{AusSta}.

\begin{proof}[Proof of Lemma~\ref{lem:bl}] 
This is  Theorem 12.6, items (1), (3) and (5), of \cite{AusSta}, the  case $p=2$ being mostly from \cite{R}, except for the global strong continuity and the limits at $\pm\infty$. Only the strong limits at $0^\pm$ are explained there. But continuity at other points is even easier with the arguments there, using the boundedness properties of the $DB$-semigroups in Corollary~8.3 in \cite{AusSta}. This being said, the limits at $\pm\infty$ come similarly from the fact that a holomorphic $C_{0}$-semigroup converges strongly to $0$ at $\infty$ on the closure of the range of its generator.
\end{proof}

We conclude with:

\begin{proof}[Proof of Proposition~\ref{prop:uniqbl}]
For any $\varepsilon>0$, we have that $f_{\varepsilon}$ belongs to $\dot W^{-1,2}(\R^{n+1}; \IC^m)$, hence $\Le ^{-1}f_{\varepsilon}$ exists in $\dot W^{1,2}(\R^{n+1}; \IC^m)$ and Lemma \ref{lem:L-1} shows that in fact it belongs to $C_0(\R; \dot W^{1,2}(\R^n; \IC^m))$. It remains to prove the convergence. By \eqref{eq:cvgamma1}, we have
\begin{align*}
\nabla_{x}(\Le ^{-1}f_{\varepsilon})(t,x)= \mathrm{p.v.} \!\int_{\R} \nabla_{x}\mS_{t-s}^\Le f(x)  \, \chi_{\varepsilon}(s) \ ds, \qquad \text{in  $L^{2}(\R^{n+1}; \IC^{mn})$}.
\end{align*}
Boundedness and strong continuity of $\mS_t^\Le$ yield that the integral on the right converges in norm and the equality holds in $C_0(\R; L^{2}(\R^n; \IC^{mn}))$ for fixed $\varepsilon>0$. Now we fix $t\in \R$. Changing variables 
\begin{align*}
\nabla_{x}(\Le ^{-1}f_{\varepsilon})(t,x) = \int_{\R} \nabla_{x}\mS_{s}^\Le f(x)  \, \chi_{\varepsilon}(t-s) \ ds
\end{align*}
and using the continuity of $s\mapsto \nabla_{x}\mS_{s}^\Le f$ valued in $L^2(\R^n; \IC^{nm})$, the integral converges to $\nabla_{x}\mS_{t}^\Le f$ in $L^2(\R^n; \IC^{mn})$ as $\varepsilon$ tends to $0$.
\end{proof}
%%%%%%%%%%%%%%%%%%%%%%%%%%%%%%%%%%%%%%%%%%%%%%%%%%%%%%%%%%%%%%%%%%%%%%%%%%%%%%%%%%%%%%%%%%%%%%%%%%%%%%%%%%%%%%%%%%%%%%%%%%%%%%%%%%%%%%%%%%%%%%%%%%%%%%%%%%%%%%%%%%%% 
\section{Generic technical lemmas}\label{sec:technical}

For the convenience of the reader, here are some technical lemmas involving non-tangential maximal functions used throughout the paper.  

\begin{lem} \label{lem:infinity} Let $0<q, p<\infty$. Let $F:\reu \to \R$ be a measurable function with $\|\NTq F\|_{p}<\infty$. 
Then  for all $x\in \R^n$,
$$
\lim_{t \to \infty} \bariint_{W(t,x)} |F(s,y)|^q\ {ds \, dy} = 0.
$$
\end{lem}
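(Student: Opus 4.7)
The plan is to compare the Whitney average $\bariint_{W(t,x)} |F|^q$ directly with values of $\NTq F$ at nearby points, then exploit the fact that $\NTq F \in L^p$ forces these values to become small on large balls.

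First, since changing the Whitney parameters $c_0 > 1$, $c_1 > 0$ in the definition of $\NTq$ only changes $\|\NTq F\|_p$ up to a constant, I may assume $c_0 \geq 2$ and $c_1 > 1$. With this choice, for any $x \in \R^n$, $t > 0$ and any $y \in B(x, (c_1 - 1)t)$ one has the inclusion
\begin{equation*}
W(t,x) = (t/2, 2t) \times B(x,t) \subset (c_0^{-1} t, c_0 t) \times B(y, c_1 t),
\end{equation*}
because the time interval is accommodated by $c_0 \geq 2$ and $B(x,t) \subset B(y, c_1 t)$ by the triangle inequality. Comparing volumes, this yields a constant $C = C(c_0, c_1, n)$ with
\begin{equation*}
\bigg(\bariint_{W(t,x)} |F|^q \bigg)^{1/q} \leq C \bigg(\bariint_{(c_0^{-1} t, c_0 t) \times B(y, c_1 t)} |F|^q \bigg)^{1/q} \leq C \, \NTq F(y)
\end{equation*}
for every $y \in B(x, (c_1-1)t)$.

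Taking the essential infimum over such $y$, Chebyshev's inequality gives
\begin{equation*}
\bigg(\essinf_{y \in B(x, (c_1-1)t)} \NTq F(y)\bigg)^p \cdot |B(x, (c_1-1)t)| \leq \int_{B(x, (c_1-1)t)} |\NTq F|^p \, dy \leq \|\NTq F\|_p^p,
\end{equation*}
so the essential infimum is bounded by $\|\NTq F\|_p \cdot |B(x, (c_1-1)t)|^{-1/p}$, which tends to $0$ as $t \to \infty$. Combining with the preceding estimate yields the desired conclusion for every $x \in \R^n$.

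There is no substantial obstacle here; the only point to handle with some care is the choice of Whitney parameters (to get the containment of regions), which is legitimate because of the equivalence of $\|\NTq F\|_p$-norms under such changes.
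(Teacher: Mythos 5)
Your proof is correct and follows essentially the same route as the paper: you compare the Whitney average at $(t,x)$ with values of $\NTq F$ at nearby points $y$ via a containment of Whitney regions, then use $\NTq F \in L^p$ to conclude decay as $t \to \infty$. The paper averages $(\NTq F)^p$ over $B(x,t)$ (with parameters $c_0 = c_1 = 2$) to obtain the bound $t^{-n}\|\NTq F\|_p^p$, whereas you take the essential infimum over the ball and invoke Chebyshev's inequality; these are the same elementary estimate phrased two ways.
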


\begin{proof}
Let $G$ be the $q$-adapted non-tangential function of $F$ with parameters $c_{0}=2, c_{1}=2$. Thus $\|G\|_{p} \sim  \|\NTq F\|_{p}<\infty$.  Next, for all $z\in B(x,t)$,
$$
\bigg(\bariint_{W(t,x)} |F(s,y)|^q\ {ds \, dy}\bigg)^{p/q} \le 2^{np/q}G(z)^p,
$$
and hence
$$
\bigg(\bariint_{W(t,x)} |F(s,y)|^q\ {ds \, dy}\bigg)^{p/q} \lesssim \barint_{B(x,t)} G(z)^p\, dz  \lesssim t^{-n} \|G\|_{p}^p.$$
The conclusion follows. 
\end{proof}

\begin{lem}\label{lem:product} Let $1<p<\infty$. Let $F, H:\reu \to \R$ be measurable functions with $\|\NT F\|_{p}<\infty$  and $\|\NT H\|_{p'}<\infty$. Then for any fixed $0<\varepsilon < R<\infty, $
$$
\iint_{(\varepsilon,R)\times \R^n} |FH|\ dx \, dt \lesssim \|\NT F\|_{p}  \|\NT H\|_{p'}
$$
and 
$$
\lim_{M\to \infty} \iint_{(\varepsilon,R)\times \{|x|>M\}} |FH|\ dx \, dt =0.
$$
\end{lem}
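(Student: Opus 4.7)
The plan is to prove both statements by a dyadic decomposition in the $t$-variable combined with the standard averaging trick which converts $L^2$-averages over Whitney regions into pointwise bounds by the non-tangential maximal function.

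For the strip estimate, I would split $(\eps,R)$ into the $O(\log_2(R/\eps))$ dyadic intervals $(t_k,2t_k)$ with $t_k=2^k\eps$. On each slab, since $|B(y,t_k/2)|$ does not depend on $y$, Fubini applied to the identity $1 = |B(y,t_k/2)|^{-1}\int_{\R^n}\mathbf{1}_{B(y,t_k/2)}(z)\,dz$ produces
\begin{align*}
 \iint_{(t_k,2t_k)\times\R^n}|FH|\,dy\,ds = \frac{1}{|B(0,t_k/2)|}\int_{\R^n}\iint_{(t_k,2t_k)\times B(z,t_k/2)}|FH|\,dy\,ds\,dz.
\end{align*}
For each fixed $z$ the inner domain is contained in the Whitney region $W(t_k,z)=(t_k/2,2t_k)\times B(z,t_k)$, which matches the defining parameters $c_0=2$, $c_1=1$ of $\NT$. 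Hence Cauchy--Schwarz together with the defining inequalities $\bariint_{W(t_k,z)}|F|^2\le \NT F(z)^2$ and the analogous one for $H$ gives
\begin{align*}
 \iint_{(t_k,2t_k)\times B(z,t_k/2)}|FH|\lesssim t_k^{n+1}\,\NT F(z)\,\NT H(z).
\end{align*}
Inserting this into the Fubini identity and then applying H\"older in $z$ delivers the slab bound $\lesssim t_k\|\NT F\|_p\|\NT H\|_{p'}$; summing the geometric series in $k$ proves the first claim, with implicit constant $\lesssim R$.

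For the tail estimate I would run the same chain of estimates on $(t_k,2t_k)\times\{|x|>M\}$, except that the inner integrand vanishes unless $B(z,t_k/2)\cap\{|x|>M\}\neq\emptyset$, which forces $|z|>M-t_k/2>M-R/2$. H\"older then yields
\begin{align*}
 \iint_{(\eps,R)\times\{|x|>M\}}|FH|\,dy\,ds\lesssim R\,\bigl\|\mathbf{1}_{\{|z|>M-R/2\}}\NT F\bigr\|_p\|\NT H\|_{p'},
\end{align*}
and the right-hand side tends to $0$ as $M\to\infty$ by dominated convergence since $\NT F\in L^p(\R^n)$.

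No real obstacle is anticipated. The only point that requires attention is the compatibility of the Whitney parameters appearing in the definition of $\NT$ with the subboxes $(t_k,2t_k)\times B(z,t_k/2)$, so that the latter are contained in $W(t_k,z)$ and the pointwise comparison with $\NT F(z)^2$ is legitimate; any change in the parameters of $\NT$ is harmless since the paper already observed that such modifications yield equivalent $\|\NT F\|_p$ norms.
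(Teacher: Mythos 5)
Your proof is correct and follows essentially the same route as the paper: cover $(\varepsilon,R)$ by finitely many Whitney-scale intervals, convert to averages via the Fubini/averaging trick, and apply Cauchy--Schwarz on Whitney regions followed by H\"older in the spatial variable. The only cosmetic difference is your use of explicit dyadic intervals $(2^k\varepsilon,2^{k+1}\varepsilon)$ rather than generic $(c_0^{-1}a,c_0a)$, and for the tail you invoke dominated convergence on $\mathbf{1}_{\{|z|>M-R/2\}}\NT F$ in $L^p$ rather than on $\mathbf{1}_{\{|x|>M\}}|FH|$ in $L^1$; both are immediate.
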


\begin{proof} By covering the interval $(\varepsilon, R)$ with a finite number of intervals of the form $(c_{0}^{-1}a, c_{0}a)$, we may reduce to a single such interval.  In that case, the averaging trick in the $x$ variable and H\"older's inequality show that 
$$
 \int_{{c_0}^{-1}a}^{c_0 a} \int_{\R^n} |FH|\ dx \, dt = \int_{\R^n} a(c_{0}-c_{0}^{-1}) \bigg(\bariint_{W(a,y)} |FH|\ dx \, dt \bigg) dy\le a(c_{0}-c_{0}^{-1}) \|\NT F\|_{p}  \|\NT H\|_{p'} <\infty.
$$
The limit follows by dominated convergence.
\end{proof}

\begin{lem}\label{lem:trace} Let $1\le q <\infty, 1<p<\infty$ and let $H \in W_{loc}^{1,q}(\reu)$ be such that $\|\NTq (\nabla H)\|_{p}<\infty$.  Then  there exists a measurable function $h:\R^n\to \R$ such that for a.e.\ $x\in \R^n$,
\begin{align*}
 \lim_{t\to 0}\bariint_{W(t,x)} |H(s,y)-h(x)|\ {ds \, dy}=0
\end{align*}
as well as
\begin{align*}
 \lim_{t \to 0} \barint_{t/2}^{2t} H(s,\cdot) \ ds = h
\end{align*}
in $L_{loc}^1(\R^n)$.
Moreover, $h\in \dot W^{1,p}(\R^n)$ with $\|\nabla_{x}h\|_{p}\lesssim \|\NTq (\nabla H)\|_{p}$ and
\begin{align*}
 \bigg\|\NTone \bigg(\frac{H-h}{t}\bigg) \bigg\|_{p} \lesssim \|\NTq (\nabla H)\|_{p}.
\end{align*}
 \end{lem}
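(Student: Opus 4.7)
My plan is to construct $h$ as a pointwise almost everywhere limit of Whitney averages via a Poincar\'e--telescoping argument, to derive the non-tangential control of $(H-h)/t$ by a three-term triangle inequality, to deduce the Sobolev regularity of $h$ from a Whitney-average mollifier, and to handle the Ces\`aro convergence by comparison with that mollifier. To begin, for $(t,x)\in\reu$ set $A(t,x):=\bariint_{W(t,x)}H\,ds\,dy$. Poincar\'e's inequality on the Whitney region $W(2^{-k}t,x)\cup W(2^{-k-1}t,x)$, together with Jensen's inequality passing from $L^1$ to $L^q$ averages of $\nabla H$, yields $|A(2^{-k}t,x)-A(2^{-k-1}t,x)|\lesssim 2^{-k}t\,\NTq(\nabla H)(x)$. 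A geometric summation makes $(A(2^{-k}t,x))_k$ Cauchy at every $x$ with $\NTq(\nabla H)(x)<\infty$, which is almost every $x$ since $\|\NTq(\nabla H)\|_p<\infty$. Define $h(x)$ to be the limit; the same telescoping then produces $\bariint_{W(t,x)}|H-h(x)|\,ds\,dy\lesssim t\NTq(\nabla H)(x)\to 0$, which is the claimed almost everywhere Whitney convergence.

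For the non-tangential estimate on $(H-h)/t$, fix $(s,y)\in W(t,x)$ and decompose
\[
 H(s,y)-h(y)= [H(s,y)-H_{W(t,x)}]+[H_{W(t,x)}-H_{W(t,y)}]+[H_{W(t,y)}-h(y)]
\]
with $H_{W(t,z)}:=\bariint_{W(t,z)}H$. Averaged over $W(t,x)$, Poincar\'e bounds the first term by $t\NTq(\nabla H)(x)$. Since $y\in B(x,t)$ forces $W(t,x)$ and $W(t,y)$ into a common enclosing Whitney region at comparable scale, the second is $\lesssim t\NTq(\nabla H)(x)$ pointwise. The third is $\lesssim t\NTq(\nabla H)(y)$ by the chain from the first step applied at $y$; averaging in $y\in B(x,t)$ then produces $\HL(\NTq(\nabla H))(x)$. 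Dividing by $s\sim t$, passing to $\sup_t$ and $L^p$-norms, and invoking the Hardy--Littlewood maximal theorem (which requires $p>1$) yields $\|\NTone((H-h)/t)\|_p\lesssim\|\NTq(\nabla H)\|_p$.

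For the Sobolev regularity, introduce the Whitney-average mollifier $V_t(y):=\bariint_{W(t,y)}H$. Translation invariance of Lebesgue measure in the inner spatial variable gives $\nabla V_t(y)=\bariint_{W(t,y)}\nabla_{y'}H\,ds\,dy'$, hence the pointwise bound $|\nabla V_t(y)|\le\NTone(\nabla H)(y)\le\NTq(\nabla H)(y)$ and $\|\nabla V_t\|_p\le\|\NTq(\nabla H)\|_p$ uniformly in $t$. The previous steps give $|V_t(y)-h(y)|\lesssim t\NTq(\nabla H)(y)$ a.e., so dominated convergence on compact sets (with dominating function $\NTq(\nabla H)\in L^p\subset L^1_{\loc}$) yields $V_t\to h$ in $L^1_{\loc}(\R^n)$. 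Reflexivity of $L^p$ then extracts a weakly convergent subsequence $\nabla V_{t_k}\rightharpoonup g\in L^p$; matching $g$ with the distributional gradient of $h$ (already meaningful because $h\in L^1_{\loc}$) shows $h\in\dot W^{1,p}(\R^n)$ with $\|\nabla h\|_p\lesssim\|\NTq(\nabla H)\|_p$. For the Ces\`aro statement, set $U_t(y):=\barint_{t/2}^{2t}H(s,y)\,ds$ and observe $V_t=K_t\ast U_t$ with $K_t:=\chi_{B(0,t)}/|B(0,t)|$. A Riesz-potential form of Poincar\'e's inequality applied to $U_t\in W^{1,q}_{\loc}(\R^n)$ together with Fubini controls $\|U_t-V_t\|_{L^1(K)}$ for any compact $K\subset\R^n$ by a constant multiple of $\iint_{K'\times(t/2,2t)}|\nabla H|$ which tends to $0$ by absolute continuity of the Lebesgue integral; combined with $V_t\to h$ in $L^1_{\loc}$ this delivers $U_t\to h$ in $L^1_{\loc}$.

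The main delicate point is the second step: since $h$ is evaluated at $y$ but the non-tangential maximal function is anchored at $x$, transferring the estimate across base points inside $B(x,t)$ forces the Hardy--Littlewood maximal function into the estimate, and this is exactly what demands $p>1$. The choice of the Whitney-averaged mollifier in the third step is what keeps the Sobolev bound clean; a purely time-only mollifier would not obviously satisfy a uniform $L^p$-gradient estimate, and the identification of the weak $L^p$-limit of $\nabla V_{t_k}$ with $\nabla h$ hinges on the $L^1_{\loc}$-convergence $V_t\to h$, which must therefore be secured in advance.
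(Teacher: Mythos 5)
Your proof is correct and follows essentially the same route the paper delegates to its references: the paper reduces to $q=1$ and cites Kenig--Pipher and Amenta--Auscher, where, as the paper says, the result is "a simple consequence of Poincar\'e inequalities and change of parameters," and your dyadic telescoping of Whitney averages, the three-term transfer from base point $x$ to $y$ (forcing the Hardy--Littlewood maximal operator and hence $p>1$), and the Whitney mollifier with a weak $L^p$ limit to identify $\nabla h$ are exactly that argument written out. The only cosmetic point worth noting is that you should remark that the limit $h(x)$ of $A(2^{-k}t,x)$ does not depend on the initial $t$, which follows from the same Poincar\'e estimate applied to adjacent comparable scales.
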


\begin{proof} It is enough to assume $q=1$ throughout as $\NTone (\nabla H) \le \NTq (\nabla H)$. This is then essentially in \cite{KP}, pp.~461-462,  up to minor modifications of the proof (working directly with averages) and is a simple consequence of Poincar\'e inequalities and change of parameters $c_{0},c_{1}$. Details of this modification are written out for example in Section~6.6 of \cite{AA}.
 \end{proof}

\end{document}